\DeclareMathOperator{\R}{\mathbb{R}}
\DeclareMathOperator{\C}{\mathbb{C}}
\DeclareMathOperator{\BC}{\mathbb{C}}
\DeclareMathOperator{\Z}{\mathbb{Z}}
\newcommand{\Hom}{\mathrm{Hom}}
\newcommand{\Aut}{\mathrm{Aut}}
\newcommand{\ba}{\begin{align}}
\newcommand{\ea}{\end{align}}
\newcommand{\bea}{\begin{eqnarray}}
\newcommand{\eea}{\end{eqnarray}}
\newcommand{\be}{\begin{equation}}
\newcommand{\ee}{\end{equation}}
\newcommand{\id}{\mathrm{id}}
\newcommand{\ie}{{\it i.e.~}}
\newcommand{\eg}{{\it e.g.~}}
\newcommand{\gl}{{\hat g}}
\newcommand{\hl}{{\hat h}}
\newcommand{\eL}{{\mathfrak e}} 
\newcommand{\eD}{{e}} 
\newcommand{\E}{\mathrm{E}8}
\newcommand{\FA}{N}
\newcommand{\Zqp}{\Z_q \rtimes_\phi \Z_p}
\title{Non-Abelian Orbifolds of Lattice Vertex Operator Algebras}
\author{Thomas Gem\"unden}
\address{Thomas Gem\"unden, Department of Mathematics, ETH Zurich
	CH-8092 Zurich, Switzerland}
\email{thomas.gemuenden@math.ethz.ch}
\author{ Christoph A.~Keller}
\address{Christoph A. Keller,  Department of Mathematics, University of Arizona, Tucson, AZ 85721-0089, USA}
\email{christoph.keller@math.arizona.edu}
\theoremstyle{plain}
\newtheorem{thm}{Theorem}[section]
\newtheorem{lem}[thm]{Lemma}
\newtheorem{prop}[thm]{Proposition}
\newtheorem{cor}[thm]{Corollary}
\theoremstyle{definition}
\newtheorem{defn}{Definition}[section]
\theoremstyle{remark}
\newtheorem*{note}{Note}
\begin{document}

\begin{abstract}
We construct orbifolds of holomorphic lattice Vertex Operator Algebras for non-Abelian finite automorphism groups $G$. To this end, we construct twisted modules for automorphisms $g$ together with the projective representation of the centralizer of $g$ on the twisted module. This allows us to extract the irreducible modules of the fixed point VOA $V^G$, and to compute their characters and modular transformation properties. We then construct holomorphic VOAs by adjoining such modules to $V^G$. Applying these methods to extremal lattices in $d=48$ and $d=72$, we construct more than fifty new holomorphic VOAs of central charge 48 and 72, many of which have a very small number of light states.
\end{abstract}

\maketitle

\section{Introduction}
In this article we continue our exploration of holomorphic vertex operator algebras of central charge $c=48$ and $c=72$. On the one hand we are motivated by the fact that not too many examples of such VOAs have been constructed. On the other hand are motivated by the question of the existence of extremal VOAs \cite{MR1614941,MR2388095, Witten:2007kt}, and more generally by the physics of holography, which requires VOAs with `a small number of light states' \cite{Hartman:2014oaa}, that is a small number of vectors with low conformal weight. This means that for a VOA
\be
V = \bigoplus_n V_{(n)}\ ,
\ee
we want the $V_{(n)}$ to have a small dimension for the first few integers $n$. At $c=72$ for instance, an extremal VOA has the following `spectrum of light states':
\be
\dim V_{(1)} =0 \ , \qquad \dim V_{(2)}= 1\ , \qquad \dim V_{(3)}=1\ .
\ee
The dimensions of these spaces are as small as is compatible with the axioms of a VOA.
It is still an open question if such VOAs exist for central charge 48 or higher.
More generally it is difficult to construct VOAs with $\dim V_{(n)}$ small:
Most constructions give relatively large dimensions, or more precisely, dimensions that grow very quickly when the central charge increases. For lattice VOAs for instance, the dimensions grow at least polynomially in the rank of the lattice. We therefore need a way to decrease the dimension, for instance by projecting out a large number of vectors. In the present work, we achieve this by orbifolding.

Starting with a VOA $V$, the first step in orbifolding is to pick an automorphism group $G < \Aut(V)$, and restrict to the sub-VOA $V^G$ which is invariant under $G$.
By the combined results of \cite{MR1684904,MR3320313,2016arXiv160305645C}, if $V$ is tame (sometimes also called strongly rational) and holomorphic, and $G$ is a finite solvable group, then $V^G$ is again tame. It is however no longer holomorphic: it will in general have finitely many irreducible modules $M_i$, which can in fact be obtained from the twisted modules of $V$ \cite{MR1393116, 1997q.alg.....3016D,MR3715704}. The next step is thus to try to find a new holomorphic VOAs by considering holomorphic extensions of $V^G$:
that is, to adjoin an appropriate set of $V^G$-modules to $V^G$ to recover a holomorphic VOA given by $V^{orb(G)}= V^G \oplus_i M_i$. By the results of \cite{Evans:2018qgz}, this is possible only if a certain 3-cocycle in the modular tensor category of the modules of $V^G$ is trivial.
To construct the actual VOA structure on $V^{orb(G)}$, an understanding of intertwining operators for twisted modules is needed. This question was addressed for the case when the automorphisms of different twisted modules commute \cite{MR1338977}. More recently a lot of progress has been made in \cite{MR3709709, MR3715217}, which in particular gave the correct construction for the general, non-commuting case. In this work however we will content ourselves with computing the character of $V^{orb(G)}$.
See \cite{2016arXiv161109843M} for a nice overview on the status of orbifold VOAs.

Following this overall approach,
in \cite{Gemunden:2018mkh} we started with VOAs based on extremal lattices of dimensions $d=48$ and $72$ \cite{MR1662447,MR1489922,MR2999133,MR3225314}, and picked $G$ to be cyclic groups. In this way we constructed more than a hundred new examples of holomorphic VOAs with few light vectors. Although we did not find an extremal VOA, we found VOAs with much smaller dimensions than for unorbifolded lattice VOAs. Our construction of $V^{orb(G)}$ in these cases was based on  the theory of cyclic orbifolds, which  was completely worked out in \cite{vanEkeren2017}. In particular, we could avoid constructing the twisted modules of $V$ explicitly: their characters could be obtained from the known modular transformation of the characters of untwisted modules.

In this article we return to the same lattice VOAs, but now orbifold by non-cyclic and even non-Abelian automorphism groups $G$. In this way we construct about fifty new VOAs, some of which improve significantly on the previously obtained ones in that their dimensions are much lower. For $c=72$, as in \cite{Gemunden:2018mkh}, we can state one result of our constructions in the following form:
\begin{thm}\label{thmmain}
	There exists  a tame holomorphic VOA with central charge $c=72$ and
	$$
	\dim V_{(1)} =0 \ , \qquad \dim V_{(2)}= 12\ , \qquad \dim V_{(3)}=200\ .
	$$
\end{thm}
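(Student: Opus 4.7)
The plan is to realize the VOA as a non-Abelian orbifold $V^{orb(G)} = V_L^G \oplus \bigoplus_i M_i$ of a holomorphic lattice VOA $V_L$ attached to some extremal even unimodular lattice $L$ of rank $72$, for a suitable finite (solvable) non-Abelian subgroup $G < \Aut(V_L)$. By the general results recalled in the introduction (\cite{MR1684904,MR3320313,2016arXiv160305645C}) the fixed point sub-VOA $V_L^G$ is tame, so the only task is to exhibit a compatible collection of irreducible $V_L^G$-modules whose direct sum with $V_L^G$ carries the structure of a holomorphic VOA, and to read off the dimensions of its low-weight subspaces.

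The first step is to search among the known extremal lattices in $d=72$ for a lattice $L$ whose automorphism group contains a non-Abelian solvable subgroup $G$ with the right structural properties, in particular a centre that acts without too many fixed vectors on $L$, and orbits of conjugacy classes whose twisted-sector contributions will cancel most of the weight-2 and weight-3 states of $V_L$. I would lift $G$ to an automorphism group of $V_L$ via the standard construction, fixing the 2-cocycle on the lattice. Because $V_L$ is holomorphic, for each conjugacy class $[g] \subset G$ there is a unique irreducible $g$-twisted $V_L$-module $V_L(g)$, and the characters of these twisted modules are obtainable as in \cite{Gemunden:2018mkh} from the modular transformations of the untwisted characters together with the explicit lattice data.

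The decisive step, and the one that goes beyond the cyclic case of \cite{Gemunden:2018mkh}, is to equip each $V_L(g)$ with a projective action of the centralizer $C_G(g)$ and to determine the relevant 2-cocycle. The irreducible $V_L^G$-modules are then indexed by pairs $([g], \rho)$ where $\rho$ is an irreducible projective representation of $C_G(g)$ with the prescribed cocycle, and their characters are obtained by decomposing the twisted characters with the projector
\[
\chi_{([g],\rho)}(\tau) = \frac{1}{|C_G(g)|} \sum_{h \in C_G(g)} \overline{\mathrm{tr}\,\rho(h)}\, Z(g,h;\tau).
\]
Once these characters and their modular $S$ and $T$ matrices are in hand, I would single out the set of modules $\{M_i\}$ whose sum with $V_L^G$ is modular invariant, check that the associated 3-cocycle in the modular tensor category is trivial so that the extension exists as a holomorphic VOA (cf.\ \cite{Evans:2018qgz}), and finally evaluate $\dim V_{(1)}$, $\dim V_{(2)}$, $\dim V_{(3)}$ from the $q$-expansion of $\mathrm{ch}\, V^{orb(G)}$.

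The main obstacle is the projective-representation data: computing the cocycle on $C_G(g)$ acting on $V_L(g)$ requires a concrete realization of the twisted modules and of the lifts of centralizer elements, which is considerably more delicate in the non-Abelian case than the cyclic one and controls both the list of admissible modules and the 3-cocycle obstruction. I expect that producing the precise numerics $\dim V_{(2)}=12$ and $\dim V_{(3)}=200$ will require scanning several candidate pairs $(L,G)$ using the explicit character formulas developed in the body of the paper, and that the stated theorem is realized by a specific such pair exhibited in the table of examples.
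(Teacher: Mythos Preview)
Your outline is correct in principle and would lead to the theorem, but it takes a substantially harder route than the paper does, and in fact misidentifies where the real work lies.

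The specific VOA of Theorem~\ref{thmmain} is the orbifold of $V_{\Gamma_{72}}$ by $G=\Z_7\rtimes_2\Z_{78}$ (first row of the $d=72$ table). The paper's key observation is that such groups $\Z_q\rtimes_\phi\Z_p$ with $\gcd(p,q)=1$ are \emph{effectively cyclic}: every commuting pair $(g,h)$ with $h\in C_g$ already lies in a single cyclic subgroup of $G$. Consequently every twisted twining character $T(g,h;\tau)$ can be read off from a cyclic orbifold via the modular transformations of \cite{vanEkeren2017}, with no need to construct twisted modules or the projective $C_g$-action at all. The cocycle $\omega$ is checked to be trivial simply by verifying the type~0 condition on the cyclic subgroups $\Z_p$ and $\Z_r$. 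The orbifold character is then obtained from the closed formula of Theorem~\ref{effcycformula},
\[
\chi^{\mathrm{orb}(\Z_q\rtimes_\phi\Z_p)}(\tau)=\chi^{\mathrm{orb}(\Z_p)}(\tau)-\tfrac{1}{r}\bigl(\chi^{\mathrm{orb}(\Z_{p/r})}(\tau)-\chi^{\mathrm{orb}(\Z_{pq/r})}(\tau)\bigr),
\]
as a combination of three cyclic orbifold characters already computed in \cite{Gemunden:2018mkh}.

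So what you flag as the ``decisive step'' and ``main obstacle''---the explicit projective representation of $C_g$ on the twisted module and the resulting $2$-cocycle---is precisely what the paper manages to \emph{avoid} for this theorem. The full machinery you describe (Sections~\ref{s:twistedmodules} and~\ref{s:chars}) is developed in the paper, but it is only needed for groups that are not effectively cyclic, such as $\Z_p\times\Z_p$; the $c=72$ example with $\dim V_{(2)}=12$ does not require it. Your approach would work, but it replaces a short computation with cyclic data by a much more elaborate construction.
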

Note that the VOA in theorem~\ref{thmmain} improves over the VOA in the analog theorem in \cite{Gemunden:2018mkh}, for which the dimensions were larger. We prove in fact that this VOA cannot be obtained from an Abelian orbifold of a lattice VOA, by giving a lower bound on $\dim V_{(2)}$ for all such Abelian orbifolds; it is thus a genuinely non-Abelian construction. This indicates that the situation in $c=72$ is different from $c=24$, where all known 71 holomorphic VOAs can be constructed as cyclic lattice orbifolds \cite{MollerInPreparation}.

For $c=48$ on the other hand, we find several orbifold VOAs which give
$$
\dim V_{(1)} =0 \ , \qquad \dim V_{(2)}= 48\  .
$$
In \cite{Gemunden:2018mkh} we found cyclic orbifolds which have the same character. We thus found no non-cyclic orbifolds that improve over cyclic orbifolds. One might of course try to improve by considering other lattices and their orbifolds. However, \cite{MR3145153} suggests that other extremal lattices, if they exist, have smaller automorphism groups; \cite{MR3435733} gives a similar result for $\Gamma_{72}$. Using non-extremal lattices on the other hand introduces more light vectors coming from the short vectors of the lattice, which need to be eliminated by a powerful enough orbifold group.
This gives at least some evidence that $\dim V_{(2)}= 48$ may be the optimal lattice orbifold result for $c=48$, and that our $c=72$ constructions may be fairly close to optimal for lattice orbifolds. Here of course we leave completely open the possibility of an altogether different construction, which could lead to better results.

In the remainder of the introduction, let us describe our algorithm for constructing orbifolds of lattice VOAs in  more detail and point out the key ingredients.
To construct non-cyclic and non-Abelian orbifolds of lattice VOAs, we proceed in several steps, relying on various results. The first step is to find a suitable subgroup of $Aut(V_L)$ with which to orbifold. For this we start with a group of lattice automorphisms $G < Aut(L)$. The elements $g$ of $G$ then need to be lifted by a lift $\iota$ to $\iota(g) \in Aut(V_L)$ \cite{MR1745258}. In practice we find it useful to focus on cases where the lifted group $\langle\iota(G)\rangle$ is isomorphic to the original group $G$. We give an explicit construction for $\iota$.

The next step is to construct the $g$-twisted $V_L$-modules $W_g$. For this we use the results of \cite{MR820716} and \cite{MR2172171}. The central result here is that the full $g$-twisted module $W_g$ can be constructed as an induced representation from  a finite dimensional projective representation $\Omega_0$ of a certain finite Abelian group $N$.
Next we need to compute the action of the centralizer $C_g$ on $W_g$: for general reasons we know that there is a projective representation $\phi_g(\cdot)$ of $C_g$ on $W_g$ \cite{1997q.alg.....3016D, MR2023933,MR3715704}. To
obtain the twining characters $T(g,h;\tau)= \textrm{tr}_{W_g} \phi_g(h) q^{L_0-c/24}$ however, we need to construct $\phi_g$ explicitly.
Because we know how $\phi_g(\cdot)$ acts on the twisted vertex operators, it is enough to compute the restriction $\phi_g(\cdot)|_{\Omega_0}$ to the finite dimensional space $\Omega_0$. We achieve this by using Schur's lemma and the explicit representation of $N$ on $\Omega_0$. We then use $\phi_g(\cdot)$ to extract the irreducible $V^G$ modules from the twisted $V$-modules $W_g$ using \cite{1997q.alg.....3016D, MR3715704}, and compute their characters explicitly, which allows us to read off their modular transformation properties.

In the final step we extend $V^G$ to an holomorphic VOA $V^{orb(G)}$ by adjoining a suitable set of irreducible $V^G$-modules which we constructed in the previous step. This is not always possible:
we need to use the results of \cite{Evans:2018qgz} to check if there is an obstruction to this step, that is if in physics language the orbifold is `anomalous'. The central idea is that the irreducible modules of $V^G$ form a modular tensor category given by a Drinfeld double $D_\omega$ twisted by some 3-cocycle $\omega \in H^3(G,U(1))$.The result of \cite{Evans:2018qgz} then tells us that holomorphic extensions $V^{orb(G)}$ only exist if $\omega$ is trivial, and it also gives us the characters of all possible extensions. The condition $\omega$ trivial we can check from the modular transformation properties of the characters of the modules constructed above.

For many cases, it is not necessary to go through this whole procedure. If $G$ is cyclic, then one can of course simply use the general theory of cyclic orbifolds worked out in \cite{vanEkeren2017}, which simplifies things considerably: First of all, it is then unnecessary to construct the twisted modules and their twining characters, as it is enough to know the twining characters of the untwisted module, from which all other twining characters can be obtained by modular transformations \cite{vanEkeren2017,Gemunden:2018mkh}. Second, the condition $[\omega]=1$ reduces to a simple condition on the conformal weight of the twisted module, either called the type 0 condition or the level matching condition.

What we observe in this article is that in conjunction with \cite{Evans:2018qgz}, the cyclic methods of \cite{vanEkeren2017} can be applied not just to cyclic orbifold groups, but to a much larger class of non-Abelian groups: We define $G$ to be an \emph{effectively cyclic} orbifold group if for every pair $(a \in G,b\in C_a)$, we can find a cyclic subgroup $C<G$ such that $a,b\in C$. For such a $G$ we can compute all twisted twining characters by considering orbifolds by various cyclic groups $C$. A large class of effectively cyclic groups that we orbifold with are of the form
\be
G = \Z_q \rtimes_\phi \Z_p\ .
\ee
For these groups we find in practice that the condition $[\omega]=1$ can be checked by testing the type 0 condition for cyclic subgroups of $G$.
For effectively cyclic groups we therefore do not need to construct the twisted modules and its twining characters, and often it is also much easier to check for holomorphic extensions.

There are of course groups that are not effectively cyclic, for which we need our full algorithm: the simplest example is $\Z_2 \times \Z_2$, where the pair $(g_1,g_2)$ can not be embedded in a cyclic group.
We illustrate this example and other not effectively cyclic examples for the smallest holomorphic VOA, namely the $E_8$ lattice VOA. Any orbifold of this VOA is of course highly constrained, since any holomorphic extension has to return the original $E_8$ VOA, which is the only holomorphic VOA of that central charge. It is therefore not at all surprising that only 14 subgroups of $Aut(L_{E8})$ have a standard lift that gives trivial $\omega$, all of which are Abelian, and most of which are cyclic. We construct the orbifold for the smallest non-cyclic such group $\Z_3\times \Z_3$. We note that it has discrete torsion, since $H^2(\Z_3\times \Z_3,U(1))$ is not trivial. We find however that the modular orbit whose contribution is controlled by the discrete torsion vanishes. This is of course necessary, since we every choice of discrete torsion must return the original $E_8$ VOA, which is exactly what we find.
To illustrate our methods, we also construct the twisted twining characters for some groups with obstructions, namely an $S_3$ and a $\Z_2\times\Z_2$, and check explicitly that the modular transformations of their characters agree with the expected modular data.

This article is organized in the following way. In section~\ref{s:defs}, we briefly discuss the general theory of orbifolds and their extensions, and how it applies to cyclic orbifolds. In section~\ref{s:CC} we introduce the notion of an effectively cyclic orbifold, and discuss classes of examples of the form $\Z_q \rtimes_\phi \Z_p$. In section~\ref{s:lifting} we discuss the lifting of lattice automorphisms to VOA automorphisms. We give a lifting algorithm that works for a large class of cases, and discuss the construction of a splitting map for the lifting. In section~\ref{s:twistedmodules} we work out the construction of twisted modules of lattice VOAs, focusing on the construction of the defect representation. In section~\ref{s:chars} we  construct the projective representation $\phi_g$ of $C_g$, which allows us to construct the twining characters of all twisted modules.
In section~\ref{s:E8} we apply these methods to some orbifolds of the $E_8$ lattice VOA.
In section~\ref{s:lattice} we finally apply our methods to our main interest: constructing new holomorphic VOAs for $c=48$ and $72$. We construct a large number of effectively cyclic but non-Abelian orbifolds of extremal lattice VOAs. In particular we construct the VOAs advertised in theorem~\ref{thmmain}.

\emph{Acknowledgments:}
We thank Terry Gannon, Sven M\"oller and Nils Scheithauer for useful discussions. We thank Gerald H\"ohn and Yi-Zhi Huang for helpful discussions and comments on the draft. TG thanks the Department of Mathematics at University of Arizona for hospitality.
The work of TG is supported by the Swiss National Science Foundation Project Grant 175494.

\section{Twisted Modules, Orbifolds and extensions}\label{s:defs}
\subsection{Twisted modules}
We want to orbifold a tame holomorphic VOA $V$ by some finite subgroup $G<Aut(V)$ of automorphisms.
We call a VOA $V$ \emph{tame} if it is rational, $C_2$-cofinite, simple, self-contragredient and of CFT-type.
The significance of this is that \cite{MR2387861} established that the fusion rules for the modules of a tame VOA satisfy the Verlinde formula \cite{Verlinde:1988sn} and hence that the modules
form a modular tensor category \cite{MR2468370}.
By the combined results of \cite{MR1684904,MR3320313,2016arXiv160305645C}, if $G$ is a finite solvable group and $V$ is tame, then $V^G$ is again tame. This is believed to hold even if $G$ is finite but not solvable; in the present work however $G$ will always be solvable.
In general $V^G$ will not be holomorphic. To construct new holomorphic VOAs $V^{orb(G)}$, we therefore need to find holomorphic extensions of $V^G$ by adjoining a suitable set of its modules. To this end it is necessary to introduce the notion of twisted modules.

Given a VOA $V$ and an automorphism $g\in \Aut(V)$ of order $N$, a  $g$-twisted $V$-module $W_g$ consists of
a $\C$-graded vector space $W_g = \bigoplus_{\lambda \in \C}W_{\lambda}$
together with a linear map $Y_W: V \to (End(W))[[x^{1/N},x^{-1/N}]]$ satisfying various axioms which can be found in \cite{1997q.alg.....3016D}. In the following, the important one is \emph{twisting compatibility}, which states that
\be
Y_W(v,x) = \sum_{k \in -r/N+\Z}v^W_k x^{-k-1}\ \textrm{if}\ gv = \exp(2\pi i r/N)v\ , \qquad r=0,\ldots N-1\ .
\ee
There is then of course also the usual notion of an irreducible twisted module.
We will denote by $\rho_g$ the conformal weight of $W_g$.

Next we consider the action of a commuting automorphism $h$ on the $g$ twisted module.
Let $V$ be a $C_2$-cofinite VOA, $g$ and $h$ automorphisms of $V$ such that $gh = hg$ and $(W_g,Y_W(\cdot,x))$ an irreducible $g$-twisted $V$-module.
Then by the results of \cite{1997q.alg.....3016D, MR2023933,MR3715704}
there exists a  linear map $\phi_g(h):W_g \to W_g$ such that
\be\label{eq:phi2}
\phi_g(h)Y_W(v,x)\phi_g(h)^{-1} = Y_W(h v,x)\ ,
\ee
which is unique up to multiplication by a scalar. $\phi_g(\cdot)$ is a projective representation of $C_g$ with some cocycle $c_g(\cdot, \cdot)$.
We will need to  work out this representation explicitly in order to compute the expression for the twisted twining characters
\be
T(g,h;\tau)= \textrm{tr}_{W_g} \phi_g(h) q^{L_0-c/24}
\ee

The projective representation $\phi_g$ is useful to obtain the irreducible modules of the fixed point VOA $V^G$.
From \cite{1997q.alg.....3016D} we know that $W_g$ decomposes into modules  for $\C_{c_g}[C_g]\otimes V^G$ as $C_g$ through a Schur-Weyl duality
\be
W_g = \bigoplus_{\chi \in \textrm{Irr}_{c_g}(C_g)} V_{[g,\chi]} \otimes W_{[g,\chi]},
\ee
where $\textrm{Irr}_{c_g}$ denotes the set of irreducible projective characters corresponding to the $2$-cocycle $c_g$.
\cite{MR3715704} then establishes that all irreducible modules of $V^G$ are given by $V_{[g,\chi]}$ for some conjugacy class $g$ and some projective irreducible representation $\chi$ of $C_g$. In summary, the irreducible modules are labeled by $[g,\chi]$ and can be obtained from the twisted modules using the projector
\be
\pi_{\chi} = \frac{1}{|C_g|}\sum_{h\in C_g} \overline\chi(h) \phi_g(h)
\ee
In particular, this operator is independent of the choice of actions on the twisted modules.

\subsection{MTCs and holomorphic extensions}
The theory of holomorphic extensions of $V^G$ was established in \cite{Evans:2018qgz}.
If $V$ is tame and holomorphic, then $V^G$ is again tame, but no longer holomorphic. Its irreducible modules form a modular tensor category governed by a three cocycle $\omega \in H^3(G;\C^*)$, isomorphic to a twisted Drinfeld double of the group $G$, $D^\omega(G)$-mod. This was originally proposed in
 \cite{Roche:1990hs}, following up on work on the operator algebra of general orbifolds \cite{Dijkgraaf:1989hb}.
It has been established for $\omega$ trivial \cite{MR1923177}, and is believed to hold in general. For all orbifold VOAs $V^{orb(G)}$ we construct, $\omega$ will always be trivial, for reasons outlined below.

The cocycle $\omega$ then determines both the projective representations of the twisted modules, and the modular transformation properties of their characters:
Denote $g^h = h^{-1}gh$, ${}^hg = hgh^{-1}$. From $\omega$ we obtain a family of two cocycles $c_g, g \in G$ via
\be\label{descend}
c_g(h_1,h_2) = \omega(g,h_1,h_2)\omega(h_1,h_2,g^{h_1h_2})
\omega(h_1,g^{h_1},h_2)^*
\ee
In particular we will assume in the following that the  projective representation $\phi_g$ in the $g$-twisted module is chosen such that its 2-cocycle is given by (\ref{descend}). These in turn determine the modular transformation properties of the characters of $[a,\chi]$ through the $S$ and $T$ matrices
\bea\label{SGannon}
S_{[a_1,\chi_1],[a_2,\chi_2]}&=&\frac{1}{|G|}\sum_{g_i\in cl(a_i), g_1g_2=g_2g_1}
\chi_1(h_1)^*\chi_2(h_2)^* \frac{c_{g_1}(k_1^{-1},h_1)c_{g_2}(k_2^{-1},h_2)}{c_{g_1}(g_2,k_1^{-1})c_{g_2}(g_1,k_2^{-1})}\\ \label{TGannon}
T_{[a_1,\chi_1],[a_2,\chi_2]}&=& e^{-2\pi ic/24}\delta_{[a_1,\chi_1],[a_2,\chi_2]}\chi_1(a_1)/\chi_1(1)
\eea
Here we define $k_i$ through $g_i = a_i^{k_i}$, and $h_1 :={}^{k_i}g_2, h_2 :={}^{k_2}g_1$.

As mentioned above, $V^G$ is tame, but not holomorphic. Our goal is to extend $V^G$ to a holomorphic VOA $V^{orb(G)}$ by adjoining a suitable set of irreducible $V^G$-modules $[a,\chi]$ to $V^G$. This is not always possible, as there can be obstructions. In physics such cases are called `anomalous'.
The theory of such extensions is described in \cite{Evans:2018qgz}.
Holomorphic extensions are given by Corollary 2 of \cite{Evans:2018qgz}:
They occur if $\omega$ is trivial. In such a situation we can also choose `discrete torsion' $\psi\in Z^2(G,U(1))$. The extension is given by
\be
V^{orb(G),\psi}= \bigoplus_{k} W_{[k,\beta_k^{\psi}	]}
\ee
where $\beta_g^\psi(h) = \psi(g,h^g)\psi(h,g)^*$. (Note that \cite{Evans:2018qgz} allowed for a choice of subgroups $K < G$. We always choose $K=G$, since we will consider all possible subgroups anyway.) If $H^2(G,U(1))$ is non-trivial, we can thus recover several possibly non-isomorphic extensions. In all cases we consider however $H^2(G,U(1))$ is either trivial or does not lead to different VOAs, so that we will drop $\psi$ in our future notation and simply write $V^{orb(G)}$.

To have a holomorphic extension of $V^G$, it is thus necessary that $\omega$ is in the trivial class. We can then choose the representative $\omega=1$, so that $c_g(\cdot,\cdot)=1$, which means that the projective characters $\chi$ are actually linear. The extension with discrete torsion $\psi=1$ then simply consists of the $C_g$-invariant part of the $g$-twisted modules $W_g$,
\be
V^{orb(G),1} = \bigoplus_k W_k^{C_k}\ .
\ee
Moreover the transformation matrices are given by
\bea
S_{[a_1,\chi_1],[a_2,\chi_2]}&=&\frac{1}{|G|}\sum_{g_i\in cl(a_i), g_1g_2=g_2g_1}
\chi_1(h_1)^*\chi_2(h_2)^* \\
T_{[a_1,\chi_1],[a_2,\chi_2]}&=& e^{-2\pi ic/24}\delta_{[a_1,\chi_1],[a_2,\chi_2]}\chi_1(a_1)/\chi_1(1)
\eea
In principle we can determine if $[\omega]=1$ by constructing all twisted modules, from which we can extract the $c_g$ and the transformation matrices $S$ and $T$. We can then check if these data are compatible with $[\omega]=1$. For the cases we consider however, it turns out that $\omega$ is determined by its restriction to various cyclic subgroups. To test for anomalies of these cyclic orbifolds, we instead use the approach of \cite{vanEkeren2017}, which we describe now.

\subsection{Cyclic Orbifolds}
For cyclic groups $\Z_N$ this was worked out explicitly in \cite{vanEkeren2017}.
Let $a$ be a generator of $\Z_N$.
\begin{thm}[\cite{deWildPropitius:1995cf}]
	 The cohomology of $\Z_N$ is given by
	\begin{equation}
	H^3(\Z_N,U(1)) \cong \Z_{N}
	\end{equation}
 and is	generated by the $3$-cocycle
	\begin{equation}
	\omega(a^j,a^k,a^l) = \exp\big(\frac{2 \pi i}{N^2}j[k+l - \langle k+l\rangle_N]\big)\ .
	\end{equation}
	where $\langle \cdot \rangle_N$ denotes reduction modulo $N$.
\end{thm}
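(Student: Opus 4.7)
My plan has three parts: compute $H^3(\Z_N,U(1))$ abstractly, verify the cocycle identity for the proposed $\omega$, and exhibit its class as a generator. For the first part I would use the standard periodic free resolution of $\Z$ over $\Z[\Z_N]$, with differentials alternating between multiplication by $(a-1)$ and by the norm $\nu=1+a+\cdots+a^{N-1}$. Applying $\Hom_{\Z[\Z_N]}(-,U(1))$ with trivial $\Z_N$-action turns $a-1$ into the zero map and $\nu$ into multiplication by $N$ on $U(1)$, so
\be
H^3(\Z_N,U(1))\cong\ker\bigl(N\colon U(1)\to U(1)\bigr) = U(1)[N]\cong\Z_N.
\ee

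For the cocycle identity, write $\varepsilon(x,y):=x+y-\langle x+y\rangle_N\in\{0,N\}$ so that $\omega(a^j,a^k,a^l)=\exp\bigl(\tfrac{2\pi i}{N^2}j\,\varepsilon(k,l)\bigr)$. Passing to additive notation in $\tfrac{1}{N^2}\Z/\Z$ and using $\langle j+k\rangle_N=j+k-\varepsilon(j,k)$, the bar $3$-cocycle condition on $\omega$ reduces to
\be
\varepsilon(j,k)\,\varepsilon(l,m)+j\bigl(\varepsilon(k,l)+\varepsilon(\langle k+l\rangle_N,m)-\varepsilon(l,m)-\varepsilon(k,\langle l+m\rangle_N)\bigr)\equiv 0\pmod{N^2}.
\ee
The first term lies in $\{0,N^2\}$ and so vanishes modulo $N^2$, while the bracketed expression vanishes identically because both $\varepsilon(k,l)+\varepsilon(\langle k+l\rangle_N,m)$ and $\varepsilon(l,m)+\varepsilon(k,\langle l+m\rangle_N)$ equal $N\lfloor(k+l+m)/N\rfloor$, the total carry in $k+l+m$ reduced modulo $N$. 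This carry identity is the main computational step of the proof.

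For the generator claim I would build an explicit chain map $\Phi_*$ from the periodic resolution to the bar resolution, with $\Phi_0(1)=[\,]$, $\Phi_1(1)=[a]$, $\Phi_2(1)=\sum_{k=0}^{N-1}[a^k|a]$ and $\Phi_3(1)=\sum_{k=0}^{N-1}[a|a^k|a]$; compatibility with the differentials on both sides is a short direct check. Under the identification above, the class $[\omega]$ corresponds to the value $\prod_{k=0}^{N-1}\omega(a,a^k,a)\in U(1)[N]$, which is well defined on cohomology because coboundaries $d\eta$ produce telescoping sums in $k$. For the proposed $\omega$ only the term $k=N-1$ contributes, yielding $\exp(2\pi i/N)$, a primitive $N$th root of unity; hence $[\omega]$ generates $H^3(\Z_N,U(1))$.
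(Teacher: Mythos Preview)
The paper does not actually prove this theorem; it is quoted from \cite{deWildPropitius:1995cf} without argument, so there is no in-paper proof to compare against. Your proposal is a correct and complete standard proof: the periodic resolution computation of $H^3(\Z_N,U(1))\cong\mu_N$ is right, the carry identity $\varepsilon(k,l)+\varepsilon(\langle k+l\rangle_N,m)=\varepsilon(l,m)+\varepsilon(k,\langle l+m\rangle_N)$ is exactly what is needed for the cocycle check, and your chain map $\Phi_*$ is the classical comparison map (one verifies $d\Phi_3(1)=(a-1)\Phi_2(1)$ and $d\Phi_2(1)=\nu\,\Phi_1(1)$ directly), so evaluating $\omega$ on $\Phi_3(1)$ legitimately identifies $[\omega]$ with $\exp(2\pi i/N)\in\mu_N$. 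Your telescoping remark for coboundaries is also correct: writing $(d\eta)(a,a^k,a)=\eta(a^k,a)\,\eta(a^{k+1},a)^{-1}\,\eta(a,a^{k+1})\,\eta(a,a^k)^{-1}$ and taking the product over $k$ collapses both pairs. Nothing is missing.
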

A general 3-cocycle $\omega_r$, $r=0,\ldots N-1$,
\begin{equation}
\omega_r(a^j,a^k,a^l) = \exp\big(\frac{2 \pi i r}{N^2}j[k+l - \langle k+l\rangle_N]\big)\ ,
\end{equation}
descends to the 2-cocycles
\be\label{cocyclecyclic}
c_{a^j}(a^k,a^l)=
\exp\big(\frac{2 \pi i r}{N^2}j[k+l - \langle k+l\rangle_N]\big)\,
\ee
For $[a^j,\chi_l]$ the linear characters are $\hat\chi_l(a^k)= e^{2\pi i lk/N}$ for $l\in\Z_N$, and the projective characters with cocycle (\ref{cocyclecyclic}) are $\chi_l(a^k) = e^{2\pi i (rjk/N+lk)/N}$, so that the modular data is given by
\be
T_{[a^j,\chi_l],[a^j,\chi_l]} = e^{-2\pi i c/24}\exp(2\pi i (rj^2+Njl)/N^2)\ .
\ee
and
\be
S_{[g_1,\chi_1],[g_2,\chi_2]}=\frac{1}{N}\chi_1(g_2)^*\chi_2(g_1)^*
\ee
On the other hand, \cite{vanEkeren2017} computed the  $T(g,h,\tau)$ for any $g,h\in \Z_n$ from the untwisted characters $T(e,g;\tau)$ by using the transformations
\begin{equation}\label{ScheitT}
T(g,h,T\cdot\tau) =
\sigma(g,h,T)T(g,gh,\tau)
\end{equation}
and
\begin{equation}\label{ScheitS}
T(g,h,S\cdot\tau) = 
\sigma(g,h,S)T(h,g^{-1},\tau)\ .
\end{equation}
The type $r \in \Z_N$ of the orbifold is defined as
\be
r = N^2 \rho_a \mod N\ ,
\ee
where $\rho_a$ is the conformal weight of the $a$-twisted module $W_a$. The
complex numbers $\sigma(g,h)$, \ie the modular data, are then fixed by $r$ of the orbifold.
In particular, using the explicit expressions for the $\sigma$, \cite{vanEkeren2017} showed that $\omega$ is trivial if and only if $r=0$, so that holomorphic extensions of $V^{\Z_N}$ can be constructed. This is the type 0 condition, and is equivalent to what is called `level matching' in physics.

\section{Effectively cyclic orbifolds}\label{s:CC}
\subsection{Effectively cyclic groups}
For cyclic examples we saw that it was not necessary to explicitly construct the twisted modules, since they could be obtained from the untwisted modules by modular transformation. This trick actually works for a much larger class of examples:
\begin{defn}\label{def:effcyc}
	Let $G$ be a finite group such that for all pairs $g \in G$ and $h\in C_g$  we can find a cyclic subgroup $C< G$ such that $g,h\in C$. We then say that $G$ is an \emph{effectively cyclic} orbifold group.
\end{defn}
The motivation for definition~\ref{def:effcyc} is that for such orbifolds, any twisted twining character $T(g,h;\tau)$ can be obtained from the orbifold characters of the cyclic group $C$. It is thus again unnecessary to explicitly construct the twisted modules.

One class of examples are  groups for which all $C_g$ except for the centralizer of the identity are cyclic. Such groups are  a special case of so-called CA groups, for which all centralizers are Abelian \cite{MR815926}.
More generally, it turns out that effectively cyclic is equivalent to another notion in group theory, namely periodic cohomology:
\begin{thm}
	Let $G$ be a finite group. $G$ is effectively cyclic iff $G$ has periodic cohomology.
\end{thm}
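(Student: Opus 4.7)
The plan is to reduce the condition of being effectively cyclic to a classical group-theoretic condition, namely that every abelian subgroup of $G$ is cyclic, and then invoke the well-known characterization of finite groups with periodic cohomology.

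The key reformulation I would establish first is: $G$ is effectively cyclic if and only if every abelian subgroup of $G$ is cyclic. For the forward direction, let $A\leq G$ be abelian and pick generators $a_1,\dots,a_n$. I proceed by induction on $n$: assuming inductively that $\langle a_1,\dots,a_{k-1}\rangle=\langle b\rangle$ is cyclic, note that $a_k$ commutes with $b$, so $a_k\in C_b$. By the effectively cyclic hypothesis, there exists a cyclic subgroup $C<G$ containing both $a_k$ and $b$; then $\langle a_1,\dots,a_k\rangle\leq C$, which is cyclic, so this subgroup is cyclic as well. Iterating gives that $A$ itself is cyclic. For the reverse direction, given any $g\in G$ and $h\in C_g$, the subgroup $\langle g,h\rangle$ is abelian because $g$ and $h$ commute (and hence so do all their powers); by hypothesis it is cyclic, and it contains both $g$ and $h$ as required.

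The second step is to invoke the classical theorem (see Cartan--Eilenberg, \emph{Homological Algebra}, Chapter XII, or Brown, \emph{Cohomology of Groups}, Chapter VI) that a finite group $G$ has periodic cohomology if and only if every abelian subgroup of $G$ is cyclic, or equivalently if and only if every Sylow $p$-subgroup of $G$ is cyclic for odd $p$ and either cyclic or generalized quaternion for $p=2$. Combining this with the reformulation above yields the theorem.

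The main obstacle is essentially notational: the forward implication in the reformulation requires an induction on the number of generators of an abelian subgroup, while the reverse is immediate. The link to periodic cohomology itself is not a step I would reprove; it is a classical fact which I would cite. No subtle machinery is needed beyond the observation that two commuting elements always generate an abelian (hence, under the cyclic-subgroup hypothesis, cyclic) subgroup.
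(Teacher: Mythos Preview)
Your proposal is correct and follows essentially the same approach as the paper: both reduce the statement to the equivalence ``$G$ effectively cyclic $\Leftrightarrow$ every abelian subgroup of $G$ is cyclic'' and then cite the classical characterization of periodic cohomology. The only minor difference is in proving that effectively cyclic implies every abelian subgroup is cyclic: you use a clean induction on the number of generators (absorbing one generator at a time into a cyclic group), whereas the paper first passes to the $p$-primary decomposition and shows that among generators of $p$-power order one is always redundant; your version is slightly more direct but the underlying idea is identical.
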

\begin{proof}
	Periodic cohomology is equivalent to every Abelian subgroup being cyclic, see \eg \cite{MR2012779}. Let $G$ have periodic cohomology. Then for any two commuting elements $g$ and $h$ the subgroup $\langle g,h\rangle$ is Abelian and hence by hypothesis cyclic. $G$ is thus effectively cyclic.

	Conversely, let $G$ be effectively cyclic and let $A$ be any abelian subgroup. Then by the fundamental theorem of finitely generated abelian groups $A$ is generated by elements $h_1, \ldots, h_n$ all of whose orders are prime powers. Now let $h_1$ and $h_2$ have orders $p^a$ and $p^b$, respectively. By hypothesis, there exists a cyclic group $C$ containing $h_1$ and $h_2$. If the order of $C$ is not a prime-power, $h_1$ and $h_2$ are elements of its $p$-primary part. Hence we may assume that $C \cong \Z_{p^c}$.  Since the subgroups of $\Z_{p^c}$ are well-ordered with respect to inclusions either $\langle h_1 \rangle$ is a subgroup of $\langle h_2 \rangle$ or vice versa. Hence one of the generators is redundant.
	By induction, for every prime $p$ there is a single generator whose order is a power of $p$ and hence $A$ is cyclic.
\end{proof}

Note that groups with periodic cohomology have trivial $H^2(G,U(1))$. This agrees with the fact that effectively cyclic orbifolds cannot have discrete torsion, since their characters are completely fixed by the untwisted modules.
Note that the converse however is not true: The group $(\Z_7 \times \Z_7) \rtimes \Z_3$, where $\Z_3$ acts on both $\Z_7$-factors by squaring, has trivial $H^2(G,U(1))$, but is not effectively cyclic.

\subsection{Semidirect products groups $\Z_q \rtimes_\phi \Z_p$}\label{ss:effcyc}
Let us now discuss a large class of such effectively cyclic orbifold groups which are of the form  $\Z_q\rtimes_\phi \Z_p$.

\begin{defn}\label{defGqp}
	Let $\phi \in \Z$ satisfy
	\begin{equation}\label{eq:phi}
	\phi^p \equiv 1 \pmod{q}.
	\end{equation}
	and let $(\phi-1)$ be coprime to $q$.
	Then the following relations define a group $\Zqp$:
	\begin{align}
	a^q & = 1 \label{rel:order_q_init} \\
	A^p & = 1 \label{rel:order_p_init} \\
	AaA^{-1} & = a^\phi \label{rel:comm_init}
	\end{align}
\end{defn}

If $\phi=1$, then of course this simply gives the direct product.
These groups produce many effectively cyclic examples:

\begin{thm}\label{thm:effcyc}
	The groups $\Zqp$ with $p,q$ coprime are effectively cyclic.
\end{thm}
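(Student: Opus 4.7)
The plan is to invoke the characterisation proved in the theorem just above (effectively cyclic $\Leftrightarrow$ periodic cohomology $\Leftrightarrow$ every abelian subgroup is cyclic) and to verify the last condition for $G=\Zqp$ by showing that every Sylow subgroup of $G$ is cyclic. This suffices because an abelian group is cyclic iff each of its Sylow subgroups is cyclic, and any Sylow $r$-subgroup of a subgroup of $G$ embeds in a Sylow $r$-subgroup of $G$.

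To run the Sylow analysis I would first confirm from the presentation in Definition~\ref{defGqp} that $|G|=pq$ and $|\langle A\rangle|=p$. The relations allow every element to be written in a normal form $a^iA^j$ with $0\le i<q$ and $0\le j<p$, so $|G|\le pq$; the obvious surjection of $G$ onto the honest semidirect product $\Z_q\rtimes_\phi\Z_p$ (well-defined by \eqref{eq:phi}) forces $|G|=pq$. Since $\gcd(p,q)=1$, the intersection $\langle a\rangle\cap\langle A\rangle$ has order dividing both $p$ and $q$, hence is trivial, so $|\langle A\rangle|=p$ and $\langle a\rangle\cong\Z_q$ is normal in $G$. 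Now fix a prime $r$ dividing $pq$. If $r\mid q$, the unique Sylow $r$-subgroup of the cyclic group $\langle a\rangle$ is characteristic there and hence normal in $G$; its order is the full $r$-part of $|G|$, so it is the unique Sylow $r$-subgroup of $G$, and it is cyclic. If $r\mid p$, the cyclic Sylow $r$-subgroup of $\langle A\rangle\cong\Z_p$ realises a Sylow $r$-subgroup of $G$, and every Sylow $r$-subgroup of $G$ is conjugate to it, hence cyclic.

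With every Sylow subgroup of $G$ cyclic, any abelian subgroup $B\le G$ has all of its Sylow subgroups cyclic and is therefore cyclic itself, and the previous theorem concludes the proof. The only mildly delicate point is the bookkeeping required to pin down $|G|=pq$ and $|\langle A\rangle|=p$ from the abstract presentation; once that is in place the Sylow analysis is entirely routine. Note that the coprimality hypothesis $\gcd(\phi-1,q)=1$ from Definition~\ref{defGqp} plays no role here, which is consistent with the fact that $\phi=1$ already yields the cyclic group $\Z_{pq}$.
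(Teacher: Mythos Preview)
Your proof is correct and more self-contained than the paper's. The paper disposes of the case $\phi\neq 1$ in one line by pointing to a classification table: ``this is exactly class 1 in the table on page 176 in \cite{MR2742530}'' (Wolf's list of finite groups with periodic cohomology), and handles $\phi=1$ by noting $G\cong\Z_{pq}$. You instead argue directly via the equivalence established in the preceding theorem, by checking that every Sylow subgroup of $G$ is cyclic (immediate from $|G|=pq$ with $\gcd(p,q)=1$ and the cyclic subgroups $\langle a\rangle$, $\langle A\rangle$) and then observing that this forces every abelian subgroup to be cyclic. Your route avoids the external reference entirely and makes the mechanism transparent; the paper's route is terser but outsources the verification. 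Both are valid, and your remark that the hypothesis $\gcd(\phi-1,q)=1$ is unused here is accurate.
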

\begin{proof}
	If $\phi\neq 1$ then this is exactly class 1 in the table on page 176 in \cite{MR2742530}. If $\phi=1$, then $\Z_q\rtimes_\phi \Z_p \equiv \Z_{pq}$.
\end{proof}

Now we want to describe the 3-cohomology classes for $\Z_q \rtimes_{\phi} \Z_p$ where $q$ and $p$ are coprime

\begin{lem}{\cite{wall_1961}}
The third cohomology group for the group $\Z_q \rtimes_{\phi} \Z_p$ such that $q$ and $p$ are coprime is given by
\begin{equation}\label{cohompeven}
	H^3(\Z_q \rtimes_{\phi} \Z_p,U(1)) \cong \Z_p \times \Z_r,
\end{equation}
where $r = (\phi^2-1,q)$.
\end{lem}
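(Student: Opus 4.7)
The plan is to invoke the Lyndon--Hochschild--Serre spectral sequence for the normal extension $1 \to \Z_q \to \Zqp \to \Z_p \to 1$,
\be
E_2^{i,j} = H^i\bigl(\Z_p, H^j(\Z_q, U(1))\bigr) \Longrightarrow H^{i+j}(\Zqp, U(1))\ ,
\ee
and extract $H^3(\Zqp, U(1))$ from the resulting $E_\infty$ page. First I would identify the coefficient system. Using the divisible resolution $0 \to \Z \to \R \to U(1) \to 0$ gives $H^j(\Z_q, U(1)) \cong H^{j+1}(\Z_q, \Z)$ for $j \geq 1$, so in low degree $H^1(\Z_q, U(1)) \cong \Z_q$, $H^2(\Z_q, U(1)) = 0$, and $H^3(\Z_q, U(1)) \cong \Z_q$. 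The conjugation action of $\Z_p$ on $H^{2k}(\Z_q, \Z) \cong \Z_q$ is by $\phi^{\pm k}$, since the periodicity generator in $H^2(\Z_q, \Z)$ transforms by $\phi^{\pm 1}$ under outer conjugation; in particular $H^3(\Z_q, U(1))$ carries the $\phi^{\pm 2}$-action.

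Next I would exploit $\gcd(p, q) = 1$. A standard norm-map argument gives $H^i(\Z_p, M) = 0$ for all $i \geq 1$ whenever $p$ is invertible on $M$, so $E_2^{i,j}$ vanishes for every $i \geq 1$ and every $j$ with $H^j(\Z_q, U(1))$ a $\Z_q$-module. Combined with $H^i(\Z_p, U(1)) = 0$ for positive even $i$, the only entries of total degree $\leq 3$ that survive on $E_2$ are $E_2^{3,0} = H^3(\Z_p, U(1)) \cong \Z_p$ and $E_2^{0,3} = H^3(\Z_q, U(1))^{\Z_p}$. The latter is the kernel of multiplication by $\phi^2 - 1$ on $\Z_q$, which has order $\gcd(\phi^2 - 1, q) = r$, so $E_2^{0,3} \cong \Z_r$. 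All differentials into or out of these two entries land in groups that were already zero on $E_2$, so $E_\infty = E_2$ in total degree $3$, giving the short exact sequence
\be
0 \to \Z_r \to H^3(\Zqp, U(1)) \to \Z_p \to 0\ .
\ee
Because $r \mid q$ and $\gcd(p, q) = 1$ force $\gcd(p, r) = 1$, this sequence splits by Chinese remainder, and we recover $\Z_p \times \Z_r$.

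The main obstacle I anticipate is pinning down the $\Z_p$-action on $H^3(\Z_q, U(1))$ correctly, so that one reaches the exponent $\phi^2$ (not $\phi$) and hence the $\phi^2-1$ in $r$. The cleanest route is via the multiplicative structure on $H^*(\Z_q, \Z)$: the Bockstein-periodicity generator $x \in H^2(\Z_q, \Z)$ transforms linearly under the outer $\Z_p$-action, and $H^{2k}(\Z_q,\Z)$ is generated by $x^k$, picking up the $k$-th power; this dualises via the connecting homomorphism to the claimed action on $H^{2k-1}(\Z_q, U(1))$.
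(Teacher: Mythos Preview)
Your spectral sequence argument is correct and complete; the paper, however, does not prove this lemma at all---it is stated with a citation to \cite{wall_1961} and used as a known input. So there is no ``paper's proof'' to compare against, and your proposal supplies what the paper deliberately omits.

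One small cosmetic point: in the Lyndon--Hochschild--Serre filtration it is $E_\infty^{3,0}\cong\Z_p$ (the inflation from the quotient) that sits at the bottom, so the extension reads
\[
0 \to \Z_p \to H^3(\Zqp,U(1)) \to \Z_r \to 0
\]
rather than the order you wrote. Since $\gcd(p,r)=1$ this is immaterial to the conclusion. Your identification of the $\Z_p$-action on $H^3(\Z_q,U(1))\cong H^4(\Z_q,\Z)$ as multiplication by $\phi^{\pm 2}$ via the ring structure on $H^*(\Z_q,\Z)$ is exactly the right mechanism for producing the $(\phi^2-1,q)$ in $r$, and is consistent with the explicit cocycle $\omega_r$ the paper writes down immediately after the lemma.
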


\begin{prop}
$H^3(\Z_q \rtimes_{\phi} \Z_p,U(1))$ is generated by the $3$-cocycles
\begin{equation}
\omega_p((a^i,A^I)(a^j,A^J)(a^k,A^K)) = \exp\big(\frac{2 \pi i}{p^2}I[J+K - \langle J+K\rangle_p]\big)
\end{equation}
and
\begin{equation}
\omega_r((a^i,A^I)(a^j,A^J)(a^k,A^K)) = \exp\big(\frac{2 \pi i}{r^2}\phi^{J+K}i[\phi^{K}\langle j\rangle_r+\langle k\rangle_r - \langle \phi^{K}j+k\rangle_r]\big)
\end{equation}
\end{prop}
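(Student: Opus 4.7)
The plan is to exhibit $\omega_p$ and $\omega_r$ as explicit 3-cocycles and then show that their cohomology classes are independent of orders $p$ and $r$ respectively. Since the preceding lemma identifies $H^3(G,U(1))\cong\Z_p\times\Z_r$ abstractly, and $\gcd(p,r)=1$ (because $r\mid q$ and $\gcd(p,q)=1$), producing two such cocycles is enough.

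For $\omega_p$, observe that it depends only on the $\Z_p$-components of its arguments: $\omega_p=\pi_A^*\tilde\omega$, where $\pi_A:G\twoheadrightarrow\Z_p$, $(a^i,A^I)\mapsto A^I$, is the quotient projection and $\tilde\omega$ is the generator of $H^3(\Z_p,U(1))$ produced by the earlier cyclic theorem. The cocycle identity for $\omega_p$ is therefore automatic. Restriction along the natural splitting $\iota_A:\Z_p\hookrightarrow G$, $A^I\mapsto(1,A^I)$, yields $\iota_A^*[\omega_p]=[\tilde\omega]$, which has order exactly $p$, so $[\omega_p]$ has order $p$ and generates the $\Z_p$-factor.

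The main technical obstacle is verifying $\delta\omega_r=0$ directly using the semidirect product law $(a^i,A^I)(a^j,A^J)=(a^{i+\phi^I j},A^{I+J})$. Expanding $\delta\omega_r$ on a 4-tuple produces the cyclic-type bracket combinations $\phi^K\langle\cdot\rangle_r+\langle\cdot\rangle_r-\langle\cdot\rangle_r$ together with twist factors $\phi^{\bullet}$ arising from shifts of the $a$-exponents past $A$-exponents. The congruences $\phi^2\equiv 1\pmod r$ (which follows at once from $r\mid\phi^2-1$) and $\phi^p\equiv 1\pmod q$, together with $r\mid q$, should collapse the twist factors and reduce the identity to the cocycle identity for the standard generator of $H^3(\Z_r,U(1))$. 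The same congruences ensure well-definedness under the representative shifts $i\mapsto i+q$ and $I\mapsto I+p$.

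To determine the order of $[\omega_r]$ and its independence from $[\omega_p]$, restrict both cocycles to the normal subgroup $N=\langle a\rangle\cong\Z_q$. Then $\omega_p|_N\equiv 1$, whereas $\omega_r|_N(a^i,a^j,a^k)=\exp\!\bigl(\tfrac{2\pi i}{r^2}\,i\,[\langle j\rangle_r+\langle k\rangle_r-\langle j+k\rangle_r]\bigr)$ factors through the quotient $\Z_q\twoheadrightarrow\Z_r$ and coincides there with the standard generator of $H^3(\Z_r,U(1))$. Using the Lyndon--Hochschild--Serre spectral sequence for $1\to\Z_q\to G\to\Z_p\to 1$, which collapses at the $E_2$-page since $\gcd(p,q)=1$ kills $H^{>0}(\Z_p,H^{>0}(\Z_q,U(1)))$, one obtains $H^3(G,U(1))\cong H^3(\Z_p,U(1))\oplus H^3(\Z_q,U(1))^{\Z_p}$; the $\Z_p$-invariant subgroup of $H^3(\Z_q,U(1))\cong\Z_q$ under the multiplication-by-$\phi^2$ action has order exactly $r$, and a short check (relying on $\gcd(q/r,r)=1$, which itself follows from $\gcd(\phi-1,q)=1$ and $\gcd(p,q)=1$ by an $\ell$-adic lifting-the-exponent argument) identifies $\iota_N^*[\omega_r]$ with a generator of this $\Z_r$-invariant subgroup. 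This forces $[\omega_r]$ to generate the $\Z_r$-factor in $H^3(G,U(1))$, completing the proof.
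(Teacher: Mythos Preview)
Your handling of $\omega_p$ as the inflation $\pi_A^*\tilde\omega$ is cleaner than the paper's bare assertion, and your restriction/LHS spectral-sequence argument for the orders and independence of $[\omega_p]$ and $[\omega_r]$ is considerably more thorough than the paper, which simply states that the two cocycles ``have the correct order to generate the group'' and moves on.

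The substantive content of the proposition, however, is the cocycle identity for $\omega_r$, and here your proposal remains a sketch. The paper does not reduce $\delta\omega_r$ to the cyclic cocycle identity for $\Z_r$ as you anticipate; instead it expands the five terms of $\delta\omega_r$ on a generic $4$-tuple $((a^i,A^I),\ldots,(a^l,A^L))$ and, after direct cancellation, arrives at
\[
\exp\Bigl(\tfrac{2\pi i}{r^2}\,(1-\phi^{2L})\,\phi^{J+K}i\bigl[\phi^K\langle j\rangle_r+\langle k\rangle_r-\langle\phi^Kj+k\rangle_r\bigr]\Bigr).
\]
This equals $1$ because the two factors $(1-\phi^{2L})$ and $\phi^K\langle j\rangle_r+\langle k\rangle_r-\langle\phi^Kj+k\rangle_r$ are \emph{each} divisible by $r$, so their product lies in $r^2\Z$. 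Your observation $\phi^2\equiv 1\pmod r$ dispatches the first factor; the second is immediate from $\langle x\rangle_r\equiv x\pmod r$. The point is that the bracket does not vanish (as it would under a pure reduction to the $\Z_r$ cocycle identity) but is merely $\equiv 0\pmod r$, and one needs the extra factor of $r$ from $1-\phi^{2L}$ to kill the $r^2$ in the denominator. The computation is short, but since your sketch misidentifies the mechanism, it should be written out in full.
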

\begin{proof}
$\omega_p$ is a $3$-cocycle and together both cocycles have the correct order to generate the group. It remains to be shown that $\omega_r$ is indeed a $3$-cocycle.
We calculate
\begin{align*}
\delta\omega_r&((a^i,A^I)(a^j,A^J)(a^k,A^K)(a^l,A^L)) \\
& = \exp\bigg(\frac{2\pi i}{r^2}\bigg[\phi^{J+K}i[\phi^K\langle j\rangle_r+\langle k\rangle_r- \langle\phi^Kj+k\rangle_r]
+ \phi^{J+K+L}i[\phi^L\langle\phi^Kj+k\rangle_r+\langle l\rangle_r - \langle\phi^L(\phi^Kj+k)+l\rangle_r] \\
&\quad\quad\quad\quad\quad+ \phi^{K+L}j[\phi^L\langle k\rangle_r+\langle l \rangle_r-\langle\phi^Lk+l\rangle_r]
- \phi^{K+L}(\phi^Ji+j)[\phi^L\langle k \rangle_r+\langle l\rangle_r-\langle\phi^Lk+l\rangle_r] \\
&\quad\quad\quad\quad\quad- \phi^{J+K+L}i[\phi^{K+L}\langle j\rangle_r+\langle\phi^Lk+l\rangle_r - \langle\phi^{K+L}j+ \phi^Lk+l\rangle_r] \bigg]\bigg)\\
& = \exp\bigg(\frac{2\pi i}{r^2}(1-\phi^{2L})\phi^{J+K}i(\phi^K\langle j\rangle_r+\langle k\rangle - \langle\phi^Kj+k\rangle_r)\bigg) \\
& = 1,
\end{align*}
where we make use of the facts that $r$ divides both $1-\phi^{2L}$ and $\phi^K\langle j\rangle_r+\langle k\rangle_r - \langle\phi^Kj+k\rangle_r$.
\end{proof}

For the groups listed in on section~\ref{s:lattice}, we find there is a simple criterion for determining whether the cohomological twist is trivial.

\begin{cor}
	If $q$ is squarefree, $H^3(G,U(1))$ is cyclic, and $\omega \in H^3(G,U(1))$ is trivial if and only if the type 0 condition is satisfied for the cyclic subgroups $\Z_p$ and $\Z_r$.
\end{cor}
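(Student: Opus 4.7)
The plan is to combine the structure theorem for $H^3(G,U(1))$ given in the preceding lemma with a restriction-to-cyclic-subgroups argument. First I would dispense with cyclicity: by the lemma, $H^3(G,U(1))\cong \Z_p\times \Z_r$ with $r=(\phi^2-1,q)$ dividing $q$. Since $\gcd(p,q)=1$ by hypothesis we have $\gcd(p,r)=1$, so the Chinese remainder theorem gives $\Z_p\times \Z_r\cong \Z_{pr}$ and $H^3(G,U(1))$ is cyclic. The squarefreeness of $q$ is not actually needed at this stage; it only enters in the restriction argument below.

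For the ``if and only if'' statement, let $H_p = \langle A\rangle\cong \Z_p$ and let $H_r = \langle a^{q/r}\rangle$ be the unique subgroup of $\langle a\rangle\cong \Z_q$ of order $r$. I want to show that the combined restriction map
\[
\mathrm{res}:H^3(G,U(1))\longrightarrow H^3(H_p,U(1))\oplus H^3(H_r,U(1))
\]
is an isomorphism; the conclusion then follows from the cyclic-orbifold analysis of section~\ref{s:defs}, which identifies triviality of $\omega|_C$ on a cyclic group $C$ with the type $0$ condition on $C$. Plugging $i=j=k=0$ into $\omega_r$ gives $1$, so $\omega_r|_{H_p}$ is trivial, and plugging $I=J=K=0$ into $\omega_p$ gives $1$, so $\omega_p|_{H_r}$ is trivial. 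Inspection of $\omega_p|_{H_p}$ shows that it equals the standard generator of $H^3(\Z_p,U(1))\cong \Z_p$ recorded in section~\ref{s:defs}. In the basis $(\omega_p,\omega_r)$ on the source and the canonical generators on the target, $\mathrm{res}$ is therefore diagonal, and it is an isomorphism iff $\omega_r|_{H_r}$ is also a generator of $H^3(\Z_r,U(1))$.

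The remaining point, and the only place where squarefreeness is used, is to verify that $\omega_r|_{H_r}$ generates $H^3(\Z_r,U(1))$. Writing $a^{(q/r)i'}$ with $i'\in\{0,\ldots,r-1\}$ for a typical element of $H_r$ and substituting $i=(q/r)i'$ etc.\ together with $I=J=K=0$ in $\omega_r$, the bracket $\langle (q/r)j'\rangle_r+\langle(q/r)k'\rangle_r-\langle(q/r)(j'+k')\rangle_r$ is always a multiple of $r$; extracting that factor and performing the substitution $\tilde\jmath=(q/r)j'\bmod r$ rewrites $\omega_r|_{H_r}$ as $q/r$ times the standard generator of $H^3(\Z_r,U(1))$. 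Squarefreeness of $q$ is exactly what guarantees $\gcd(q/r,r)=1$, so $q/r$ is a unit modulo $r$ and the change of variables is a bijection; hence $\omega_r|_{H_r}$ is indeed a generator. Assembling the pieces, $\omega$ is trivial on $G$ iff both $\omega|_{H_p}$ and $\omega|_{H_r}$ are trivial, iff the type $0$ condition holds for the cyclic subgroups $\Z_p$ and $\Z_r$. The main obstacle is precisely this final verification: without squarefreeness, a common prime factor of $q/r$ and $r$ could lower the order of the restricted class and destroy injectivity of $\mathrm{res}$, which is exactly why the hypothesis appears in the statement.
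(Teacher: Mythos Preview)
Your proposal is correct and follows essentially the same route as the paper: restrict the generators $\omega_p$ and $\omega_r$ to the cyclic subgroups $\langle A\rangle\cong\Z_p$ and $\langle a^{q/r}\rangle\cong\Z_r$, observe that these restrictions generate the respective $H^3$'s, and conclude that the combined restriction map is an isomorphism. The paper's proof is much terser---it simply asserts that the restrictions are generators without computation---whereas you actually carry out the verification and, more importantly, explain where the squarefreeness hypothesis enters (the paper does not spell this out at all).

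One small remark on phrasing: when you say the substitution ``rewrites $\omega_r|_{H_r}$ as $q/r$ times the standard generator,'' this is not quite what your computation shows. After substituting $\tilde\jmath=(q/r)j'\bmod r$ and $\tilde k=(q/r)k'\bmod r$, and noting that $(q/r)i'$ may be replaced by $\tilde\imath=(q/r)i'\bmod r$ because the bracket is divisible by $r$, you obtain \emph{exactly} the standard generator in the variables $(\tilde\imath,\tilde\jmath,\tilde k)$. In other words, $\omega_r|_{H_r}$ is the pullback of the standard generator under the automorphism $i'\mapsto (q/r)i'\bmod r$ of $\Z_r$; since automorphisms send generators to generators, you are done. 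The factor $q/r$ does not survive as a multiplier on the cohomology class---what survives is only the requirement that $i'\mapsto (q/r)i'$ be a bijection mod $r$, i.e.\ $\gcd(q/r,r)=1$, which is precisely squarefreeness. This is a cosmetic point and does not affect the validity of your argument.
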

\begin{proof}
The restrictions of $\omega_p$ and $\omega_r$ to the cyclic subgroups $\Z_p$ and $\Z_r \subset \Z_q$ respectively are exactly the generators of the corresponding third cohomology groups. Hence their contribution to the cohomological twist can be determined from the respective types as in Equation \ref{cocyclecyclic}.
\end{proof}

There is one other case which we will use later on: namely, if $p=q$, and $\phi=1$. In that case we recover the Abelian group $\Z_p\times\Z_p$, which is not effectively cyclic. The generators of its cohomology are given by the following theorem:

\begin{thm}[\cite{deWildPropitius:1995cf}]\label{H3ZpZp}
	For abelian groups of the form $\Z_p \times \Z_p$
	the third cohomology group is given by
	\begin{equation}
	H^3(\Z_p \times \Z_p,U(1)) \cong \Z_p^3
	\end{equation}
	and is generated by
	\begin{equation}
	\omega^{(1)}((a^i,A^I)(a^j,A^J)(a^k,A^K)) = \exp\big(\frac{2 \pi i}{p^2}i[j+k - \langle j+k\rangle_p]\big)\ ,
	\end{equation}
	\begin{equation}
	\omega^{(2)}((a^i,A^I)(a^j,A^J)(a^k,A^K)) = \exp\big(\frac{2 \pi i}{p^2}I[J+K - \langle J+K\rangle_p]\big).
	\end{equation}
	and
	\begin{equation}
	\omega^{(12)}((a^i,A^I)(a^j,A^J)(a^k,A^K)) = \exp\big(\frac{2 \pi i}{p^2}i[J+K - \langle J+K\rangle_p]\big),
	\end{equation}
\end{thm}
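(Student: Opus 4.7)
The plan is to first determine the order of $H^3(\Z_p \times \Z_p, U(1))$ abstractly, then verify that the three displayed formulas are indeed $3$-cocycles and that their cohomology classes are linearly independent; since the total count will match, they must generate.

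For the abstract calculation, I would apply the short exact sequence of trivial coefficient modules $0 \to \Z \to \R \to U(1) \to 0$. Since $\Z_p \times \Z_p$ is finite and $\R$ is a $\Q$-vector space, $H^n(\Z_p\times\Z_p, \R) = 0$ for all $n \geq 1$, and the induced long exact sequence collapses to $H^3(\Z_p \times \Z_p, U(1)) \cong H^4(\Z_p \times \Z_p, \Z)$. Using the standard integral cohomology $H^{2k}(\Z_p, \Z) = \Z_p$ for $k \geq 1$ and $H^{2k+1}(\Z_p, \Z) = 0$, the cohomological Künneth theorem in degree $4$ yields the three summands $H^0 \otimes H^4 \oplus H^2 \otimes H^2 \oplus H^4 \otimes H^0 \cong \Z_p^3$, with vanishing Tor correction because the Tor terms would live in odd-degree pieces. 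So $|H^3(\Z_p \times \Z_p, U(1))| = p^3$.

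Next, I would verify the cocycle condition for each formula. The cocycles $\omega^{(1)}$ and $\omega^{(2)}$ are pullbacks of the generating $3$-cocycle of $H^3(\Z_p, U(1))$ from the first theorem of section~\ref{s:defs} along the two coordinate projections $\Z_p \times \Z_p \twoheadrightarrow \Z_p$, hence are cocycles automatically. For $\omega^{(12)}$, a direct computation of $\delta\omega^{(12)}$ closely parallels the earlier verification of $\omega_r$ (specialised to $\phi = 1$), exploiting the integrality of $[J+K-\langle J+K\rangle_p]/p$, and produces no new terms that fail to cancel modulo $p^2$.

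For cohomological independence, I would use restriction to three $p$-cyclic subgroups. Pulling back along $\Z_p \times \{1\} \hookrightarrow \Z_p \times \Z_p$ trivialises both $\omega^{(2)}$ and $\omega^{(12)}$ (their $I,J,K$ arguments vanish) while sending $\omega^{(1)}$ to the cyclic generator; pulling back along $\{1\}\times\Z_p$ reverses the roles of $\omega^{(1)}$ and $\omega^{(2)}$; and pulling back along the diagonal $\{(a^i,A^i)\}$ turns all three cocycles into that same cyclic generator. Hence, if a product $(\omega^{(1)})^\alpha(\omega^{(2)})^\beta(\omega^{(12)})^\gamma$ were a coboundary, the first restriction would force $\alpha \equiv 0 \pmod p$, the second $\beta \equiv 0$, and the third then $\gamma \equiv 0$. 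This produces a subgroup of order $p^3$ inside a group of order $p^3$, so the three classes generate. The main potential obstacle is the combinatorial check that $\delta\omega^{(12)} = 1$, but it is routine and directly mirrors the $\omega_r$ computation carried out above.
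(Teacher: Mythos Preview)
Your proposal is correct. Note, however, that the paper does not actually prove this statement: it is quoted verbatim from \cite{deWildPropitius:1995cf} and used as input, so there is no in-paper argument to compare against. Your approach --- computing $|H^3|$ via the coefficient sequence $0\to\Z\to\R\to U(1)\to 0$ and K\"unneth for $H^4(\Z_p\times\Z_p,\Z)$, then checking independence by restricting to the three cyclic subgroups $\langle a\rangle$, $\langle A\rangle$, $\langle aA\rangle$ --- is exactly the argument implicit in the paper's remark immediately following the theorem (``$\omega=1$ if and only if the cyclic groups $\langle a\rangle,\langle A\rangle,\langle aA\rangle$ are all of type $0$''), so your independence step dovetails with how the result is subsequently applied. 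One small cosmetic point: your justification for the vanishing Tor correction (``would live in odd-degree pieces'') is slightly imprecise as stated; the correct reason is that in every pair $(i,j)$ with $i+j=5$ at least one of $H^i(\Z_p,\Z)$, $H^j(\Z_p,\Z)$ vanishes, and $\mathrm{Tor}(\Z,-)=0$ handles the $(0,5)$ and $(5,0)$ cases.
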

Note that this implies that $\omega=1$ if and only if the cyclic groups $\langle a \rangle, \langle A \rangle$ and $\langle aA \rangle$ are all of type 0.

\subsection{Characters of effectively cyclic orbifolds}
Let us finally give a closed form expression for the character of $V^{orb(G)}$ for such groups in terms of cyclic orbifold characters. To do this, we want to compute the conjugacy classes and centralizers of the $\Z_q \rtimes_{\phi} \Z_p$ groups we introduced above.
To do this, we introduce a slightly different notation:
Let $p$ be coprime to $q$,
$1\neq \phi \in \Z$ as above, and $r$ the smallest divisor of $p$ such that $\phi^r \equiv 1 \pmod{q}$.
\\
\begin{lem}
	The following relations define the same group $\Z_q \rtimes_{\phi} \Z_p$ as above:
	\begin{align}
	a^q & = 1\\
	A^r & = B \\
	B^{\frac{p}{r}} & = 1 \\
	AaA^{-1} & = a^\phi \\
	BaB^{-1} & = a
	\end{align}
	Then, in particular, $\phi^r \equiv 1 \pmod{q}$.
\end{lem}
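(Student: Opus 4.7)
The plan is to show that the new presentation is obtained from the original by adjoining the abbreviation $B := A^r$, and to verify that all of the new relations are consequences of the original ones and vice versa.

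First I would establish the existence and value of $r$. Since $A^p=1$ and $AaA^{-1}=a^\phi$, iterating gives $A^p a A^{-p} = a^{\phi^p}$, so $\phi^p \equiv 1 \pmod q$. Therefore the multiplicative order of $\phi$ in $(\Z/q\Z)^\times$ divides $p$, and this order is the smallest positive integer $r$ with $\phi^r\equiv 1 \pmod q$; since it divides $p$, it is in particular a divisor of $p$, which is the $r$ of the lemma. This gives the ``in particular'' clause $\phi^r\equiv 1\pmod q$ for free.

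Next I would exhibit the two presentations as isomorphic by constructing mutually inverse homomorphisms on generators. Let $G_1$ denote the group defined by the relations \eqref{rel:order_q_init}--\eqref{rel:comm_init} and $G_2$ the group defined by the new relations. Define
\begin{align*}
\psi\colon G_1 \to G_2, \quad a\mapsto a,\ A\mapsto A, \qquad
\chi\colon G_2 \to G_1, \quad a\mapsto a,\ A\mapsto A,\ B\mapsto A^r.
\end{align*}
To see $\psi$ is well defined, the only nontrivial relation to check is $A^p=1$, which in $G_2$ reads $A^p = (A^r)^{p/r} = B^{p/r} = 1$. To see $\chi$ is well defined, check: $a^q=1$ and $AaA^{-1}=a^\phi$ are imposed in $G_1$; $A^r = \chi(B)$ by definition; $\chi(B)^{p/r} = A^p = 1$ in $G_1$; and finally $\chi(B) a \chi(B)^{-1} = A^r a A^{-r} = a^{\phi^r} = a$, where the last equality uses the congruence $\phi^r \equiv 1 \pmod q$ together with $a^q=1$. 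Since $\psi$ and $\chi$ agree with each other's inverses on the generators (in particular $\chi\psi(A)=A$ and $\psi\chi(B)=\psi(A^r)=A^r=B$ in $G_2$, using $A^r=B$ as a relation), they are mutually inverse isomorphisms.

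The verification is essentially routine; the only point that requires attention is that the abbreviation $B=A^r$ be compatible with the relation $BaB^{-1}=a$, and this is precisely the content of $\phi^r \equiv 1 \pmod q$ established in the first step. Hence the two presentations define the same group $\Z_q\rtimes_\phi \Z_p$.
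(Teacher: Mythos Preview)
The paper states this lemma without proof, treating it as an elementary change of presentation. Your argument is correct: you introduce $B$ as an alias for $A^r$ and verify that the two sets of defining relations imply one another, which is the standard Tietze-transformation check. The one place your write-up slightly over-reaches is the first paragraph: $r$ is not something you need to ``establish'' from the group relations, since the paper \emph{defines} $r$ as the smallest divisor of $p$ with $\phi^r\equiv 1\pmod q$ (existence being guaranteed by $\phi^p\equiv 1\pmod q$ from Definition~\ref{defGqp}). With that in hand, the ``in particular'' clause is immediate, and the only substantive content is that $BaB^{-1}=a$ follows in $G_1$ from $A^r a A^{-r}=a^{\phi^r}=a$, which you verify correctly. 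The mutually inverse homomorphisms $\psi,\chi$ are well defined for exactly the reasons you give, so the two presentations are isomorphic.
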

In what follows we take $q$ to be prime. This will cover most of the cases we are interested in, and it turns out that the expression for the character we derive still holds in the cases when $q$ is not prime.

A general element $g \in \Z_q \rtimes_{\phi} \Z_p$ can be expressed as $g = a^uA^v B^w$ with $u = 0,\ldots,q-1$, $v = 0,\ldots,r-1$ and $w = 0,\ldots,\frac p r - 1$.
The center of $\Zqp$ is given by $\Z_{\frac{p}{r}} = \langle B \rangle$.

\begin{thm}
	The conjugacy classes and centralisers of $\Z_q \rtimes_{\phi} \Z_p$ are given by
	\begin{enumerate}
		\item A class $[A^i]$ of length $q$ for every $i\in \Z_p$ with $r\nmid i$  with centraliser $C_{A^i} = \Z_p$.
		\item A class $[a^iB^j]$ of length $r$ for every pair $(a^i,B^j)$ of a coset representative $a^i \in \frac{\Z_q^*}{\langle \phi \rangle_{\text{mult}}}$ and an (possibly identity) element $B^j \in \Z_{\frac{p}{r}}$ with centraliser $C_{\Z_q \rtimes_{\phi} \Z_p}(a^iB^j) = \Z_{\frac{pq}{r}}$
		\item A central class $[B^i]$ of length $1$ for every element $B^i \in \Z_{\frac{p}{r}}$
	\end{enumerate}
\end{thm}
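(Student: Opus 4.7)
The plan is to exploit the normal form $g = a^u A^v B^w$ with $u \in \{0,\ldots,q-1\}$, $v \in \{0,\ldots,r-1\}$, $w \in \{0,\ldots,p/r-1\}$, and to reduce the classification to three elementary conjugation calculations. A useful first observation is that $\langle a \rangle$ is the kernel of the natural surjection $\Zqp \twoheadrightarrow \Z_p$ and is therefore normal, so conjugation preserves the image of $g$ in the quotient. Using $A^r = B$ one also identifies $A^v B^w$ with $A^{v+rw}$, reindexing $(v,w)$ by a single $i \in \Z_p$, and $r \nmid v$ is equivalent to $r \nmid i$. Only the $a^u$-part can therefore move under conjugation.

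Next I would derive the conjugation rules by iterating $AaA^{-1} = a^\phi$ to $A^l a^k A^{-l} = a^{k\phi^l}$ and using the centrality of $B$:
\be
a^k g a^{-k} = a^{u + k(1-\phi^v)} A^v B^w,\quad A^l g A^{-l} = a^{u\phi^l} A^v B^w,\quad B^m g B^{-m} = g.
\ee
The case split is on whether $v=0$. If $v \neq 0$, then $0 < v < r$, so $\phi^v \not\equiv 1 \pmod q$, and primality of $q$ gives $1-\phi^v \in \Z_q^*$; choosing $k$ with $u + k(1-\phi^v) \equiv 0$ conjugates $g$ to $A^i$, and the $\langle a \rangle$-orbit of $A^i$ is the full set $\{a^s A^i : s \in \Z_q\}$ of size $q$. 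Distinct $i$ with $r \nmid i$ have distinct images in $G/\langle a \rangle \cong \Z_p$ and so cannot be conjugate, yielding case (1). If $v=0$, then $g = a^u B^w$; $a$-conjugation is trivial, and $A^l$-conjugation acts by $u \mapsto u\phi^l$. For $u=0$ the $B^w$ are singletons, giving case (3); for $u \neq 0$ the orbit has size equal to the multiplicative order of $\phi$ modulo $q$, which is $r$, and the distinct classes are parameterised by the coset $a^u \in \Z_q^*/\langle\phi\rangle_{\text{mult}}$ together with $B^w \in \Z_{p/r}$, giving case (2).

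For the centralizers I would solve $x g x^{-1} = g$ with $x = a^k A^l B^m$. In case (1) the same computation gives $x (A^v B^w) x^{-1} = a^{k(1-\phi^v)} A^v B^w$, so centralisation forces $k=0$, and the centralizer is $\langle A, B \rangle = \langle A \rangle \cong \Z_p$. In case (2) one finds $x (a^u B^w) x^{-1} = a^{u\phi^l} B^w$, which with $u \neq 0$ forces $\phi^l \equiv 1 \pmod q$, hence $r \mid l$, hence $l=0$ in the normal range; the centralizer is then $\langle a, B \rangle \cong \Z_q \times \Z_{p/r}$, cyclic of order $pq/r$ because $\gcd(q, p/r) = 1$. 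Case (3) is central, so its centralizer is all of $\Zqp$. As a consistency check the class-length count $q(p - p/r) + (q-1)p/r + p/r = pq$ matches $|\Zqp|$. The only real obstacle is the careful bookkeeping of the commutation relations; the key ingredient that makes the case split clean is the primality of $q$, which guarantees $1-\phi^v \in \Z_q^*$ for every $0 < v < r$.
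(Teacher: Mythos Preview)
Your proof is correct and follows essentially the same approach as the paper: both compute the conjugation action explicitly, split into the cases $v\neq 0$, $v=0$ with $u\neq 0$, and $u=v=0$, and use invertibility of $\phi^v-1$ modulo the prime $q$ to determine the orbits and centralisers. The only cosmetic difference is that the paper writes a single combined conjugation formula $a^xA^y\,a^uA^vB^w\,A^{-y}a^{-x}=a^{u\phi^y-x(\phi^v-1)}A^vB^w$, whereas you compute conjugation by $a^k$, $A^l$, $B^m$ separately; your added remark that distinct $i$ map to distinct elements of $G/\langle a\rangle$ and your class-count check are nice extra sanity checks not spelled out in the paper.
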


\begin{proof}
	For a general element conjugation is given by
	\begin{equation}
	a^xA^y a^uA^v B^w A^{-y}a^{-x} = a^{u\phi^y-x(\phi^v-1)}A^vB^w.
	\end{equation}
	First take $v \neq 0$. Then $\phi^v - 1$ is invertible modulo $q$  and
	the element $a^uA^vB^w$ is conjugate to $A^vB^w$ for any $u$.
	The centralisers of these representatives are exactly the group $\Z_p = \langle A \rangle$, so that the length of $[A^i]$ is $q$.

	Next take $v=0$, $u \neq 0$. Then
	\begin{equation}
	a^xA^ya^uB^wA^{-y}a^{-x} = a^{u\phi^y}B^w.
	\end{equation}
	shows that there are $\frac{q-1}{r}\times \frac{p}{r}$ conjugacy classes  representatives
	$a^uB^w$, $u \in \frac{\Z_q^*}{\langle \phi \rangle}$.
	The centralisers of those conjugacy classes are generated by $a$ and $B$ and are therefore the cyclic group $\Z_{\frac{pq}{r}} = \langle aB \rangle$.

	Finally, $u=v=0$ gives the $\frac{p}{r}$ central classes $B^w$ whose centralizer is of course the entire group $\Zqp$.
\end{proof}

\begin{thm}\label{effcycformula}
	Assume that $\omega=1$ for the orbifold $\Zqp$.
	The character of the orbifold $V^{\text{orb}(\Zqp)}$ is given by
	\begin{align}\label{orbifoldcharacter}
	\chi^{\text{orb}(\Zqp)}(\tau) = \chi^{\text{orb}(\Z_p)}(\tau) - \frac{1}{r}(\chi^{\text{orb}(\Z_\frac{p}{r})}(\tau) - \chi^{\text{orb}(\Z_\frac{pq}{r})}(\tau)).
	\end{align}
\end{thm}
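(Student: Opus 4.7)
The plan is to use the fundamental identity
\begin{equation*}
\chi^{\text{orb}(G)}(\tau) = \frac{1}{|G|}\sum_{\substack{(g,h)\in G\times G \\ gh=hg}} T(g,h;\tau),
\end{equation*}
which follows from the decomposition $V^{\text{orb}(G)} = \bigoplus_{[g]} W_g^{C_g}$, the averaging formula $\operatorname{tr}_{W_g^{C_g}} q^{L_0-c/24} = |C_g|^{-1}\sum_{h\in C_g}T(g,h;\tau)$, and the conjugation invariance of $T$. By Theorem~\ref{thm:effcyc} the group $G=\Z_q\rtimes_\phi\Z_p$ is effectively cyclic, so for every commuting pair $(g,h)$ the character $T(g,h;\tau)$ may be computed inside any cyclic subgroup containing both elements. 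The strategy is therefore to partition commuting pairs of $G$ according to the maximal cyclic subgroups containing them and to collapse each partial sum into a cyclic orbifold character via $\sum_{g,h\in C}T(g,h;\tau) = |C|\chi^{\text{orb}(C)}(\tau)$.

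First I would identify the maximal cyclic subgroups. Since $\gcd(p,q)=1$ implies $\gcd(q,p/r)=1$, the subgroup $\langle a,B\rangle=\langle a\rangle\times\langle B\rangle\cong \Z_{pq/r}$ is cyclic; it is normal because $B$ is central and $AaA^{-1}=a^\phi\in\langle a\rangle$. The subgroup $\langle A\rangle\cong \Z_p$ is cyclic, and coprimality of $(\phi-1)$ with $q$ forces its normalizer to coincide with itself, giving $q$ distinct conjugates $a^u\langle A\rangle a^{-u}$, $u\in\Z_q$. These exhaust the maximal cyclic subgroups: using the commutation relation $u(1-\phi^{v'})\equiv u'(1-\phi^v)\pmod{q}$ for $g=a^uA^vB^w$ and $g'=a^{u'}A^{v'}B^{w'}$, together with the invertibility of $1-\phi^v$ modulo $q$ for $v\in\{1,\dots,r-1\}$ (where $q$ prime is used), a short case analysis places every commuting pair either in $\langle a,B\rangle$ (when $v=v'=0$) or in a common conjugate $a^t\langle A\rangle a^{-t}$ with $t=u(1-\phi^v)^{-1}$; the sub-case $v=0\neq v'$ uses the centrality of $B^w$ to put $g\in\langle B\rangle$ into every such conjugate. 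A parallel calculation shows that any two distinct maximal cyclic subgroups intersect exactly in the centre $\langle B\rangle\cong \Z_{p/r}$: equating $a^{u(1-\phi^k)}A^k=a^{u'(1-\phi^{k'})}A^{k'}$ forces $k=k'$ and $(u-u')(1-\phi^k)\equiv0\pmod{q}$, hence $r\mid k$, and similarly $\langle a,B\rangle\cap a^u\langle A\rangle a^{-u}=\langle B\rangle$.

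Consequently, every commuting pair in $\langle B\rangle$ lies in all $q+1$ maximal cyclic subgroups while every other commuting pair lies in exactly one, so inclusion–exclusion together with the conjugation invariance of $T$ (which makes all $q$ conjugates of $\langle A\rangle$ contribute equally) yields
\begin{equation*}
\sum_{(g,h)\,\text{comm.}}T(g,h;\tau) = \sum_{g,h\in\langle a,B\rangle}T(g,h;\tau) + q\sum_{g,h\in\langle A\rangle}T(g,h;\tau) - q\sum_{g,h\in\langle B\rangle}T(g,h;\tau).
\end{equation*}
Rewriting each term as $|C|\chi^{\text{orb}(C)}(\tau)$ and dividing by $|G|=pq$ gives
\begin{equation*}
\chi^{\text{orb}(G)}(\tau) = \chi^{\text{orb}(\Z_p)}(\tau) + \tfrac{1}{r}\chi^{\text{orb}(\Z_{pq/r})}(\tau) - \tfrac{1}{r}\chi^{\text{orb}(\Z_{p/r})}(\tau),
\end{equation*}
which is the desired identity. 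I expect the main obstacle to be the classification step showing that every commuting pair sits inside one of the listed maximal cyclic subgroups, since it relies on the primality of $q$ and on carefully tracking how the central elements $B^w$ distribute themselves over conjugates of $\langle A\rangle$; once this is in hand the inclusion–exclusion and the closing arithmetic are routine.
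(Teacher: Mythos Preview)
Your proof is correct and rests on the same group-theoretic input as the paper's: the covering of $\Zqp$ by $q+1$ maximal cyclic subgroups (the normal $\langle a,B\rangle\cong\Z_{pq/r}$ and the $q$ conjugates of $\langle A\rangle\cong\Z_p$) whose pairwise intersections are all equal to the centre $\langle B\rangle\cong\Z_{p/r}$, together with conjugation invariance of $T(g,h;\tau)$. The organization differs: the paper first invokes the conjugacy-class classification and writes $\chi^{\text{orb}(G)}=\sum_{[g]}W_g^{C_g}$, so that for classes of type $[A^j]$ and $[a^iB^j]$ the centralizers are already the cyclic groups $\Z_p$ and $\Z_{pq/r}$, and the maximal-subgroup decomposition is applied only to the $h$-average for the central classes $[B^j]$; you instead treat the full double sum over commuting pairs symmetrically and apply inclusion--exclusion directly. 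Your route is somewhat more streamlined, since it does not require the separate classification of conjugacy classes and centralizers; the paper's route makes the contribution of each centralizer type explicit. Both collapse to the same arithmetic after dividing by $|G|=pq$.
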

\begin{proof}
	Let $W_z^G(\tau)$ be the $G$-invariant of the irreducible $z$-twisted module.
	By \cite{Evans:2018qgz} the orbifold character is given by
	\begin{align}
	\chi^{\text{orb}(\Z_q \rtimes_{\phi} \Z_p)}(\tau) & =  \sum_{g \in \text{cl}(\Zqp))} W_g^{C_g}(\tau) \\
	& =  \sum_{B^j \in \Z_\frac{p}{r}} W_{B^j}^{\Zqp}(\tau) + \sum_{A^j,r \nmid j} W_{A^j}^{\Z_p}(\tau)
	+ \sum_{a^iB^j:a^i \in \frac{\Z_q^*}{\langle \phi \rangle}, B^j \in \Z_\frac{p}{r}} W_{a^iB^j}^{\Z_{\frac{pq}{r}}}(\tau) \\
	& =  \sum_{B^j \in \Z_\frac{p}{r}} W_{B^j}^{\Zqp}(\tau) + \sum_{A^j,r \nmid j} W_{A^j}^{\Z_p}(\tau)
	+ \frac{1}{r}\sum_{a^iB^j:a^i \in \Z_q^*, B^j \in \Z_\frac{p}{r}} W_{a^iB^j}^{\Z_{\frac{pq}{r}}}(\tau)
	\end{align}
	where in the last line we used the fact that $|\Z^*_q/\langle \phi \rangle| = |\Z^*_q|/r$. To rewrite the character invariant under the entire group $\Zqp$, we use the fact that it has $q$ maximal subgroups of order $p$ that are all conjugate to $\Z_p = \langle A \rangle$, and one maximal subgroup $\Z_{\frac{pq}{r}} = \langle aB \rangle$ of order $\frac{pq}{r}$.  $\Zqp$ is the union of all these maximal subgroups, and their pairwise intersections all equal to the center $\langle B \rangle$. Using the fact that the twining characters $T(g,h;\tau)$ only depend on the conjugacy class of $h$, we can thus decompose the first character as
	\begin{align}
	W_{B^j}^{\Zqp}(\tau) & = \frac{1}{pq}\big(q \sum_{d = 0}^{p-1} T(B^j,A^d,\tau) + \sum_{b = 0}^{\frac{pq}{r}-1}T(B^j,(aB)^b,\tau)\big) - q\sum_{c = 0}^{\frac{p}{r}-1} T(B^j,B^c,\tau) \\
	& = \frac{1}{pq}\big(pq W_{B^j}^{\Z_p}(\tau) + \frac{pq}{r}W_{B^j}^{\Z_{\frac{pq}{r}}}(\tau) - \frac{pq}{r}W_{B^j}^{\Z_{\frac{p}{r}}}(\tau)\big) \\
	& =  W_{B^j}^{\Z_p}(\tau) - \frac{1}{r}\big(W_{B^j}^{\Z_{\frac{p}{r}}}(\tau) - W_{B^j}^{\Z_{\frac{pq}{r}}}\big).
	\end{align}
	Hence
	\begin{align}
	\chi^{\text{orb}(\Zqp)}(\tau) & =  \sum_{B^j \in \Z_\frac{p}{r}} W_{B^j}^{\Z_p}(\tau) - \frac{1}{r}\big(W_{B^j}^{\Z_{\frac{p}{r}}}(\tau) - W_{B^j}^{\Z_{\frac{pq}{r}}}\big) \\
	& \quad  + \sum_{A^j,r \nmid j} W_{A^j}^{\Z_p}(\tau)
	+ \frac{1}{r}\sum_{a^iB^j:a^i \in \Z_q^*, B^j \in \Z_\frac{p}{r}} W_{a^iB^j}^{\Z_{\frac{pq}{r}}}(\tau)  \\
	& = \sum_{A^j \in \Z_p} W_{A^j}^{\Z_p}(\tau) + \frac{1}{r}\sum_{a^iB^j \in \Z_\frac{pq}{r}} W_{a^iB^j}^{\Z_{\frac{pq}{r}}}(\tau) - \frac{1}{r}\sum_{B^j \in \Z_\frac{p}{r}}W_{B^j}^{\Z_{\frac{p}{r}}}(\tau) \\
	& = \chi^{\text{orb}(\Z_p)}(\tau) - \frac{1}{r}(\chi^{\text{orb}(\Z_\frac{p}{r})}(\tau) - \chi^{\text{orb}(\Z_\frac{pq}{r})}(\tau)).
	\end{align}
\end{proof}
There are some cases in section~\ref{s:lattice} for which $q$ is not prime. They turn out to be of the form $\Z_q\rtimes \Z_4$ and $\phi=-1$, and it turns out that (\ref{orbifoldcharacter}) is still correct for them.

\section{Lifting theory}\label{s:lifting}
\subsection{Lifting map}

Let us now discuss orbifolds of lattice VOAs. The idea is to obtain automorphisms of the lattice VOA $V_L$ by lifting automorphisms of the underlying lattice $L$. We will now denote VOA automorphisms with hatted letters, and the underlying lattice automorphism with unhatted letters.

From \cite{MR1745258} we know that for lattice VOAs, $\Aut(\hat L)$ and $\Aut(L)$ are related by
\be\label{ses}
1 \to  \Hom(L,\C^*)\to \Aut(\hat L)\to \Aut(L)\to 1
\ee
To proceed, we pick a subgroup $G<\Aut(L)$ to get the short exact sequence
\be\label{Gses}
1 \to  \Hom(L,\C^*)\to \hat G \to G\to 1\ ,
\ee
with $\hat G < \Aut(\hat L)$. To lift $G$, we construct a lift
\be
\iota : G \to \hat G \qquad g \mapsto \gl := (1,g)\ ,
\ee
where $\gl\in Aut(\hat L)$ acts on $\hat L$ as
\be
\gl e_\alpha = \eta_g(\alpha)^{-1}e_{\alpha g}\ .
\ee
Here $\eta_g(\cdot)$ is a function from $L$ to $\C^*$ satisfying
\be\label{etacondition}
\frac{\eta_{g}(\alpha)\eta_{g}(\beta)}{\eta_{g}(\alpha+\beta)} = \frac{\epsilon(\alpha,\beta)}{\epsilon(g\alpha,g\beta)}\ .
\ee
This condition ensures that $\hat g$ is indeed an automorphism of the VOA.
Note that this implies that in general $\eta_g$ is not in $\Hom(L,\BC^*)$.
We derive an explicit expression for a lifting map $\iota$, that is for a $\eta_g(\cdot)$, in theorem~\ref{thm:etaexpression} in appendix~\ref{app:lift}.
This $\eta_g$ is of course not unique: for any $\xi_g \in \Hom(L,\BC^*)$, $\xi_g\eta_g$ again satisfies (\ref{etacondition}). Let us therefore write $\hat g = (\xi_g,g)$ for general elements in $Aut(\hat L)$
with action on $\hat L$ given by
\be
\hat g e_\alpha =(\xi_g,g) e_\alpha= \xi_g(\alpha)^{-1}\eta_g(\alpha)^{-1} e_{\alpha g}\ .
\ee
Their group multiplication is given by
\be
\hat g \cdot \hat h =
(\xi_g,g)\cdot (\xi_h,h)= (\xi_g\xi_h s(g,h),gh)
\ee
where
\be
s(g,h)=\frac{\eta_{h}((g h)^{-1}\cdot)\eta_{g}(g^{-1}\cdot)}{\eta_{g h}((g h)^{-1}\cdot)}\ ,
\ee
which, as is shown in appendix~\ref{app:liftmult}, is indeed in $\Hom(L,\BC^*)$.
In practice we describe $\xi_g\in \Hom(L,\BC^*)$ by the choice of a vector $v_g$ such that $\xi_g(\alpha)= e^{2\pi i\langle\alpha,v_g\rangle}$.

We call a lift such that
\be
\eta_g(\alpha)=1\ , \qquad \alpha \in L^g
\ee
a \emph{standard lift}. In what follows we will always work with such standard lifts. Given an arbitrary lift $\eta_g$, we can turn it into a standard lift $\tilde \eta_g$ by
the algorithm given in appendix~\ref{app:standardlift}, which constructs a homomorphism $\xi_g$ such that $\xi_g \eta_g$ is a standard lift.
Often the mere existence of a standard lift is enough for the construction of twisted module characters; in particular it turns out that for the construction described in section~\ref{s:CC}, we only need the action on the fixed point lattice.

In the end of the day we are not only interested in the lifted elements $\iota(g)$ by themselves, but in the group $\langle \iota(G) \rangle$ that they generate.
We will be particularly interested in the case where $\iota$ splits (\ref{Gses}), so that the lifted group $\iota(G)$ is isomorphic to $G$.
If $\iota$ is not splitting, we will have to work with the lifted group $\langle \iota(G) \rangle < \hat G$ instead, which is bigger than $G$. A typical case when this can occur is if $G$ is cyclic, generated by an element $g$ of even order. To satisfy (\ref{etacondition}), it is then sometimes necessary to define $\eta_g(\cdot)$ such that the order of $\gl$ is doubled, $ord(\gl) = 2ord(g)$ \cite{vanEkeren2017}.

\subsection{Lifting effectively cyclic groups}\label{ss:effcyclift}
Finding a lift that splits is particularly important if $G$ is effectively cyclic and we want  $\iota(G)$ to remain effectively cyclic.
In view of section~\ref{ss:effcyc}, let us discuss the lifting theory of groups of the form
\be
G = \Z_q\rtimes_{\phi}\Z_p\ .
\ee
In appendix~\ref{app:liftZqZp} we discuss how to construct a splitting lift such that
\be
\iota(\Z_q\rtimes_{\phi}\Z_p) \cong \Z_q\rtimes_{\phi}\Z_p\ .
\ee
The outcome is that in principle there can be an obstruction to such a splitting lift. We will work with lifts that leave the order of the generators invariant, such that $\gl$ and $\hl$ still satisfy (\ref{rel:order_p_init}) and (\ref{rel:order_q_init}). To ensure (\ref{rel:comm_init}), we use the following lemma:

\begin{lem}
	The relation $\gl\hl\gl^{-1}  = \hl^{\phi}$ is equivalent to $\eta_{g,h}$ satisfying
	\begin{equation}\label{eq:lift_main}
	\eta_g(\alpha g^{-1}h)\overline{\eta}_g(\alpha g^{-1})\eta_h(\alpha g^{-1})\prod_{i=0}^{\phi-1}\overline{\eta}_h(\alpha h^i) = 1.
	\end{equation}
\end{lem}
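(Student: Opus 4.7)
The plan is to test the operator identity $\gl\hl\gl^{-1} = \hl^{\phi}$ on an arbitrary generator $e_\alpha \in \hat L$, compute both sides in closed form from the defining action $\gl e_\alpha = \eta_g(\alpha)^{-1}e_{\alpha g}$, and read off the resulting scalar equation. Since the underlying lattice relation $g^{-1}hg = h^{\phi}$ already holds in $G < \Aut(L)$, the $e_\bullet$ labels on the two sides will agree automatically and the entire content of the relation will be an equality of $\BC^*$-coefficients.

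First I would assemble the elementary building blocks. Substituting $\alpha \mapsto \alpha g^{-1}$ in the defining formula yields
\be
\gl^{-1}e_\alpha = \eta_g(\alpha g^{-1})\,e_{\alpha g^{-1}}\, ,
\ee
and a one-line induction on $\phi$ gives
\be
\hl^{\phi}e_\alpha = \Big(\prod_{i=0}^{\phi-1}\eta_h(\alpha h^i)^{-1}\Big)e_{\alpha h^{\phi}}\, .
\ee
Composing the three factors of the conjugation in order then produces
\be
\gl\hl\gl^{-1}e_\alpha
= \eta_g(\alpha g^{-1})\,\eta_h(\alpha g^{-1})^{-1}\,\eta_g(\alpha g^{-1}h)^{-1}\,e_{\alpha g^{-1}hg}\, .
\ee

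Next I would equate with $\hl^{\phi}e_\alpha$. The lattice labels match because $\alpha g^{-1}hg = \alpha h^{\phi}$ is the relation (\ref{rel:comm_init}) applied to $\alpha$. Equating the two $\BC^*$-prefactors and inverting both sides gives
\be
\eta_g(\alpha g^{-1}h)\,\eta_g(\alpha g^{-1})^{-1}\,\eta_h(\alpha g^{-1})\,\prod_{i=0}^{\phi-1}\eta_h(\alpha h^i)^{-1} = 1\, .
\ee
Interpreting $\overline{\eta}_{\bullet}$ as the multiplicative inverse $\eta_{\bullet}^{-1}$ (the values lie in $\BC^*$, so for unit phases the complex conjugate coincides with the inverse) this is exactly the stated identity (\ref{eq:lift_main}). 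Running the calculation in reverse shows the implication in the other direction: if (\ref{eq:lift_main}) holds for every $\alpha\in L$, then $\gl\hl\gl^{-1}$ and $\hl^{\phi}$ agree on every generator of $\hat L$ and hence as elements of $\Aut(\hat L)$.

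The argument is essentially bookkeeping; no use is made of the cocycle condition (\ref{etacondition}), which enters only when verifying that each individual $\gl$ is well-defined as a VOA automorphism. The only step requiring care is tracking the arguments $\alpha g^{-1}$ and $\alpha g^{-1}h$ that arise from the outer $\gl^{\pm 1}$ and inner $\hl$ in the conjugation, together with the convention that $e_\bullet$ transforms under a right action of $\Aut(L)$ on $L$. I would expect this to be the only place where sign or argument errors might creep in, so I would cross-check the final formula by specialising to $\phi=1$, where it must reduce to the commutation identity $\gl\hl = \hl\gl$ for a direct product lift.
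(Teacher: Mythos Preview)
Your proof is correct and follows essentially the same route as the paper's. The paper's version (in the appendix) differs only cosmetically: instead of computing $\gl\hl\gl^{-1}e_\alpha$ and $\hl^{\phi}e_\alpha$ separately and equating coefficients, it computes $\hl^{-\phi}\gl\hl\gl^{-1}e_\alpha$ in one go and checks that the scalar prefactor equals $1$, using the same building blocks (\ref{liftpower}) and (\ref{liftinverse}) that you derived. Your identification of $\overline{\eta}_\bullet$ with $\eta_\bullet^{-1}$ is exactly the convention the paper is using, and the lattice-label matching $\alpha g^{-1}hg = \alpha h^{\phi}$ is precisely the relation $g^{-1}hgh^{-\phi}=e$ that the paper invokes.
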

To obtain such $\eta_{g,h}$, we construct homomorphisms $\xi_{g,h}$ such that the new lifts $\tilde \eta_g = \eta_g\xi_g$ and $\tilde \eta_h = \eta_h\xi_h$ do satisfy Equation \ref{eq:lift_main}. In order for the new $\tilde \eta_g$ and $\tilde \eta_h$ to still be standard lifts we demand that $\xi_g(L_g) = \xi_h(L_h) = 1$.
We find that even though there could be an obstruction to finding such $\xi_{g,h}$, for all examples we consider, this obstruction vanishes, so that we can find a splitting lift.

For an effectively cyclic orbifold as described in Theorem~\ref{thm:effcyc},  note that the mere existence of this lift is enough, since we only need to know its action on the untwisted sector.
For groups
that do not fall in this category, such as for instance $\Z_3\times\Z_3$, we will need the explicit expression for the lift, which is provided in lemma~\ref{ZqZplift}.

\section{Constructing twisted modules}\label{s:twistedmodules}

The twisted modules $W_{\gl}$ for lattice VOAs were constructed in  \cite{MR820716,MR2172171} as induced representations of a smaller representation $\Omega_0$ of a finite group, the so called defect representation. We will briefly review this construction, using the notation of \cite{MR2172171}, and then give an explicit construction of $\Omega_0$ using technology from \cite{MR2742735}.

\subsection{The $g$-twisted heisenberg current algebra $\hat{\mathfrak{h}}_g$}

Let $\mathfrak{h} = L \otimes_{\Z} \C$  and let $g$ act linearly on $\mathfrak{h}$. Let $\mathfrak{h}_j$ be the eigenspace of $g$ to the eigenvalue $\xi_n^{-j}$.
\begin{defn}\label{def:heisenberg}
	The $g$-twisted heisenberg current algebra $\hat{\mathfrak{h}}_g$ consists of states
	\begin{equation}
		\hat{\mathfrak{h}}_g = \text{span}\{v_m:v \in \mathfrak{h},m \in \frac{j}{n} + \Z \text{ if } gv = \xi_n^{-j}v\}.
	\end{equation}
	As an operator $v_m$ has weight $-m$.
\end{defn}

\subsection{Twisted lattice operators $U_\alpha$}
Let $L$ be an even, unimodular lattice, $g$ a lattice automorphism of order $n$ . Also let $V_L$ be the lattice VOA corresponding to $L$ and $\gl$ be a standard lift of $g$.
We denote the unique irreducible $\hat g$-twisted $V_L$-module by $W_{\gl}$.
To construct the twisted lattice vertex operators,
we define the group $G_\gl$, whose elements are of the form $c e^v U_\alpha \in \C^\times \times \exp \mathfrak{h}_0 \times L$, where $\mathfrak{h}_0 = \pi_g \mathfrak{h}$ the orthogonal projection on $g$-invariant subspace, together with the relations
\be\label{meq:U_mult}
	U_{\alpha}U_{\beta} = \epsilon(\alpha,\beta)B(\alpha,\beta)^{-1} U_{\alpha + \beta},
	\ee
	where $\epsilon(\alpha,\beta)$ is defined in Definition \ref{def:eps} and
	\be\label{meq:B_def}
	B(\alpha,\beta) = n^{-\langle \alpha | \beta \rangle} \prod_{k=1}^{n-1}(1-\xi_n^k)^{\langle \alpha g^k | \beta\rangle},
	\ee
	\begin{equation}
		e^{v}e^{w} = e^{v+w},
	\end{equation}
	\begin{equation}
		e^{v}U_{\alpha}e^{-v} = \exp(\langle v|\alpha \rangle)U_{\alpha}
	\end{equation}
	and  the twist compatibility condition
	\begin{equation}\label{meq:U_tw}
	U_{\alpha g} = \eta_g(\alpha)U_{\alpha}\exp\big(2 \pi i(b_{\alpha} + \alpha_{(0)})\big),
	\end{equation}
	where
	\begin{equation}
	b_{\alpha} = \frac{|\pi_g(\alpha)|^2}{2}\in \C.
	\end{equation}
	Additionally, let the twisted lattice operators act on $\hat{\mathfrak{h}}_g$ by
	\begin{equation}
	[h_{(m)},U_{\alpha}] = \delta_{m,0}\langle \pi_g(h)|\alpha\rangle U_{\alpha}, \quad \text{for all } \alpha \in L, h \in \mathfrak{h}.
	\end{equation}

	Finally, let us define $L_g^\perp \subset L$ to be the sublattice of coinvariant lattice vectors, that is $L_g^\perp := L \cap (1-\pi_g)\R^d$, where $\pi_g$ is the orthogonal projector onto the $g$ invariant subspace. Furthermore, define the corresponding subgroup of $G_\gl$
	\begin{equation}
		G_\gl^\perp = \{cU_{\mu}: c \in \C, \mu \in L_g^\perp\}.
	\end{equation}

	Then in \cite{MR2172171} it is shown that as a $\hat{\mathfrak{h}}_g$-module $W_{\gl}$ is induced from its vacuum subspace
		\begin{equation}
		\Omega = \{w \in W_{\gl}| v_m w = 0, v_m \in \hat{\mathfrak{h}}_g, m \geq 0\}.
		\end{equation}

		$\Omega$, in turn, is a $G_\gl$-module that decomposes into $\exp \mathfrak{h}_0$ eigenspaces
		\be
		\Omega = \sum_{\alpha\in \pi_g(L)} \Omega_\alpha\ ,
		\ee
		where $\Omega_{\alpha} = \{w \in \Omega|e^h w = e^{\langle h|\alpha \rangle}\}$ such that
		\begin{equation}
		U_\beta \Omega_{\alpha} = \Omega_{\alpha+\pi_g\beta}.
		\end{equation}
		It follows that all the homogeneous subspaces $\Omega_{\alpha}$ are $G_\gl^\perp$-modules and $\Omega$ can be induced from $\Omega_0$ (or any $\Omega_{\alpha}$ for that matter).

		Note that from (\ref{meq:U_tw}) it follows that $G_\gl^\perp$ is a central extension for the finite abelian group
		\be
			\FA := L_g^\perp/L(1-g)\
		\ee
		whose isomorphism class is determined by the skew
		\begin{equation}
		C(\alpha, \beta) = U_{\alpha}U_{\beta}U_{\alpha}^{-1}U_{\beta}^{-1} = \prod_{k=0}^{n-1}(-\xi_n^k)^{-\langle \alpha g^k | \beta \rangle}.
		\end{equation}
		In particular, Equation (4.44) in \cite{MR2172171} shows that $C(\alpha,\beta)$ restricts to a non-degenerate, alternating, bimultiplicative form on $\FA$, endowing it with the structure of a symplectic module.

		$W_{\gl}$ is irreducible if and only if $\Omega_0$ is an irreducible projective representation of $N$.
		We now want to construct $\Omega_0$ (or any $\Omega_\alpha$ for that matter)
		as a projective representation $\Omega_0$ of $\FA$  of dimension $d(g) = \sqrt{|N|}$. We will call $\Omega_0$ the defect representation.
		We will see that $\Omega_0$ is the the unique projective representation of $N$ with commutator
		 $C(\cdot,\cdot)$.

\subsection{Defect Representation}

To construct the representation $(\rho,\Omega_0)$ of $N$ we want to decompose $N$ in the following way.
Using the fact that $C$ is a non-degenerate, bimultiplicative, alternating form from the results in appendix~\ref{app:Davydov}, we know that there
exists a subgroup $A \le \FA$ and an isomorphism $\gamma$ such that
\be
\gamma: \FA \to A + \check A \qquad [\mu] \mapsto (x_{[\mu]},\chi_{[\mu]})\ ,
\ee
where $\check A$ denotes the group of irreducible characters of $A$, such that the bilinear form $C$ is given by
\be
C(\gamma^{-1}(x,\chi),\gamma^{-1}(y,\psi)) = \psi(x)\chi(y)^{-1}.
\ee
From this decomposition we can construct the sought after projective representation $\Omega_0$:
\begin{thm}
	The unique (up to projective equivalence), irreducible representation $(\rho,\Omega_0)$ of $\FA$ such that the skew of its $2$-cocycle is given by $C$ is given by
	\be
	\rho(x,\chi)\eD_{\psi} = \psi(x)\eD_{\chi\psi},
	\ee
	where $\Omega_0$ is the vector space generated by $\{\eD_{\chi}:\chi \in \check A\}$.
\end{thm}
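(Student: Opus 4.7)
My plan is to verify in turn the four claims implicit in the statement: that $\rho$ is a projective representation of $N$, that the skew of its $2$-cocycle is $C$, that it is irreducible, and that it is unique up to projective equivalence.

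First I would compute directly
\begin{align*}
\rho(x_1,\chi_1)\rho(x_2,\chi_2)\,\eD_{\psi}
&= \rho(x_1,\chi_1)\,\psi(x_2)\,\eD_{\chi_2\psi}
 = \psi(x_2)\,(\chi_2\psi)(x_1)\,\eD_{\chi_1\chi_2\psi}\\
&= \chi_2(x_1)\,\psi(x_1+x_2)\,\eD_{\chi_1\chi_2\psi},
\end{align*}
while $\rho((x_1,\chi_1)+(x_2,\chi_2))\eD_\psi = \psi(x_1+x_2)\eD_{\chi_1\chi_2\psi}$. This reads off the $2$-cocycle of $\rho$ as
$$
c\bigl((x_1,\chi_1),(x_2,\chi_2)\bigr)=\chi_2(x_1),
$$
identifying the product with $\rho$ applied to the composed element up to this scalar, and in particular showing $\rho$ is a projective representation. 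Taking its skew yields
$$
\frac{c((x_1,\chi_1),(x_2,\chi_2))}{c((x_2,\chi_2),(x_1,\chi_1))}=\chi_2(x_1)\,\chi_1(x_2)^{-1},
$$
which under the identification $\gamma$ matches the prescribed form of $C$ exactly.

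Next I would establish irreducibility. Note that the operators $\rho(x,1)$ with $x\in A$ act diagonally on the basis $\{\eD_\psi\}$ with eigenvalue $\psi(x)$; since the characters $\check A$ separate the points of $A$, each joint eigenspace is one-dimensional, so any invariant subspace $U\subseteq \Omega_0$ must be spanned by a subset of the $\eD_\psi$. But the operators $\rho(0,\chi)$ act by shifting $\eD_\psi\mapsto\eD_{\chi\psi}$, and these shifts act transitively on the basis. Hence the only non-zero invariant subspace is all of $\Omega_0$.

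Finally, for uniqueness up to projective equivalence, I would appeal to the standard theory of irreducible projective representations of finite abelian groups with non-degenerate cocycle: given such a cocycle $c$ on $N$, there is a unique irreducible projective representation with commutator equal to the skew of $c$, and its dimension equals $\sqrt{|N|}$. In our case $|N|=|A+\check A|=|A|^2$, so $\sqrt{|N|}=|A|=|\check A|=\dim\Omega_0$, confirming both that $\Omega_0$ has the correct dimension $d(g)$ and that it must coincide with the twisted group algebra module up to projective equivalence. The main delicate point is really the bookkeeping with $\gamma$: the claim depends on the decomposition $N\cong A+\check A$ provided by appendix~\ref{app:Davydov} being such that $C$ pulls back to the canonical pairing $((x,\chi),(y,\psi))\mapsto \psi(x)\chi(y)^{-1}$, which is exactly what makes the cocycle $c(\cdot,\cdot)=\chi_2(x_1)$ have skew equal to $C$. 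Once that identification is in hand, the rest of the argument is formal.
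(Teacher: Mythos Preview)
Your proof is correct and follows the same overall structure as the paper's: compute the cocycle directly, read off its skew, verify irreducibility, then invoke the dimension count $\dim(\rho)^2=|N|$ for uniqueness. The one genuine difference is in the irreducibility step. The paper computes the character $\mathrm{Tr}\,\rho(x,\chi)$ and shows $\|\rho\|^2=\frac{1}{|N|}\sum|\mathrm{Tr}\,\rho|^2=1$ via orthogonality of characters, whereas you argue directly that the operators $\rho(x,1)$ are simultaneously diagonalised with one-dimensional joint eigenspaces, and the shifts $\rho(0,\chi)$ permute these transitively. Your argument is slightly more elementary in that it avoids the character-norm machinery, while the paper's computation has the minor advantage of producing the character of $\rho$ explicitly, which is sometimes useful elsewhere. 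Either route is standard for this Heisenberg-type construction.
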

\begin{proof}
We find that $\rho$ satisfies the product relation
\begin{align}
\rho(x,\chi)\rho(y,\phi)e_{\psi} & =\phi(x)\psi(xy)e_{\chi\phi\psi} \\
& = \phi(x) \rho(xy,\chi\phi)e_{\psi}
\end{align}
so that $\rho$ does indeed define a projective representation for the group $N$ and the commutator is indeed given by $C$.

Then the characters are given by
\begin{equation}
	\mathrm{Tr}(\rho(x,\chi)) = \begin{cases} \sum_{\psi \in \check{A}} \psi(x), \quad & \text{if } \chi = 1 \\ 0 \quad & \text{otherwise}. \end{cases}
\end{equation}

Hence the number of irreducible representations in $\rho$ is
\begin{align}
	\|\rho\|^2 & = \frac{1}{|A|^2} \sum_{x \in A} \overline{\mathrm{Tr}(\rho(x,0))}\mathrm{Tr}(\rho(x,0)) \\
	& = \frac{1}{|A|^2} \sum_{x \in A}\sum_{\chi,\psi \in \check{A}} \overline{\chi(x)}\psi(x) \\
	& = \sum_{\chi \in \check{A}}\frac{1}{|A|^2} \sum_{x \in A} \overline{\chi(x)}\chi(x) \\
	& = \sum_{\chi \in \check{A}}\frac{1}{|A|} \\
	& = 1,
\end{align}
so that $\rho$ is indeed irreducible. Finally, the uniqueness of $\rho$ follows from the fact that $\mathrm{dim}(\rho)^2 = |N|$.
\end{proof}
This representation $\rho$ is now almost the sought after representation of $L_gF^\perp$. The only problem is that if we try to define the $U_\mu$ in this way, they may not satisfy the twist compatibility condition (\ref{meq:U_tw}). Due to uniqueness, we can solve this problem by a change of section, which gives a new representation $\tilde \rho$ from $\rho$.
That is, we  lift $\rho$ to a representation $\tilde \rho$ of the twisted lattice operators $U_{\mu}$ for $\mu \in L^{\perp}_g$,
\be
\tilde \rho : L^{\perp}_g \to GL(\Omega_0) \qquad \mu \mapsto U_\mu\ .
\ee
$\tilde \rho$ is related to $\rho$ by change of section, i.e. there is a function $\lambda: L^{\perp}_g \to \C^*$ such that
\be\label{meq:lambda_def}
U_{\mu} = \lambda(\mu)\rho(x_{[\mu]},\chi_{[\mu]}),
\ee
where $(x_{[\mu]},\chi_{[\mu]}) = \gamma([\mu])$.
We need to be careful in our choice of $\lambda$, since the $U_\mu$ need to satisfy both (\ref{meq:U_mult}) as well as (\ref{meq:U_tw}). These are equivalent to $\lambda$ satisfying  the two conditions
	\begin{equation}\label{meq:lambda_mult}
\lambda(\mu)\lambda(\nu)\chi_{\nu}(x_{\mu}) = \epsilon(\mu,\nu)B(\mu,\nu)^{-1}\lambda(\mu+\nu)
\end{equation}
and
	\begin{equation}\label{meq:lambda_tw2}
\lambda(\alpha(1-g)) =\eta(\alpha)^{-1}\epsilon(\alpha(1-g),\alpha g)
B(\alpha (1-g),\alpha g)^{-1}\exp(2\pi i b_{\alpha})
\end{equation}

In order to proceed to find such a $\lambda$ first note the following:
\begin{lem}
	If the operators $U_{\mu}, U_\nu$ satisfy the twist compatibility condition (\ref{meq:U_mult}), then so does $U_{\mu+\nu}$.
\end{lem}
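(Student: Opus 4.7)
The statement is that twist compatibility (equation \ref{meq:U_tw}) is preserved under the multiplication law (\ref{meq:U_mult}); I read ``(\ref{meq:U_mult})'' in the lemma as a misprint for (\ref{meq:U_tw}). The plan is to compute $U_{\mu g}U_{\nu g}$ in two ways and compare.

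On the one hand, (\ref{meq:U_mult}) applied directly gives
\[
U_{\mu g}U_{\nu g} = \epsilon(\mu g,\nu g)B(\mu g,\nu g)^{-1}U_{(\mu+\nu)g}.
\]
On the other hand, applying the twist-compatibility hypothesis to each factor separately, then commuting $\exp(2\pi i\,\mu_{(0)})$ past $U_\nu$ using the commutator $[h_{(0)},U_\nu]=\langle\pi_g h|\nu\rangle U_\nu$ (so that $\exp(2\pi i\,\mu_{(0)})U_\nu = \exp(2\pi i\langle\pi_g\mu|\nu\rangle)\,U_\nu\exp(2\pi i\,\mu_{(0)})$), and finally using (\ref{meq:U_mult}) on $U_\mu U_\nu$, yields
\[
U_{\mu g}U_{\nu g}= \eta_g(\mu)\eta_g(\nu)\,\epsilon(\mu,\nu)B(\mu,\nu)^{-1}e^{2\pi i\langle\pi_g\mu|\nu\rangle}\,U_{\mu+\nu}\,\exp\!\bigl(2\pi i(b_\mu+b_\nu+(\mu+\nu)_{(0)})\bigr),
\]
where I used $\mu_{(0)}+\nu_{(0)}=(\mu+\nu)_{(0)}$. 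Equating the two expressions for $U_{\mu g}U_{\nu g}$ and solving for $U_{(\mu+\nu)g}$ produces an explicit formula; it then remains to match it against the right-hand side of (\ref{meq:U_tw}) for $\alpha=\mu+\nu$.

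Three elementary identities close the argument. First, the $\eta$-cocycle condition (\ref{etacondition}) gives $\eta_g(\mu)\eta_g(\nu)\epsilon(\mu,\nu)/\epsilon(\mu g,\nu g)=\eta_g(\mu+\nu)$, absorbing all $\eta$ and $\epsilon$ factors. Second, since $g$ is an orthogonal lattice automorphism, $\langle\mu g^{k+1}|\nu g\rangle=\langle\mu g^{k}|\nu\rangle$ for every $k$, and therefore $B(\mu g,\nu g)=B(\mu,\nu)$ from the defining product in (\ref{meq:B_def}); this cancels the $B$-ratio. Third, because $\pi_g$ is an orthogonal projector, the quadratic form $b_\alpha=|\pi_g\alpha|^2/2$ satisfies the polarisation identity $b_{\mu+\nu}=b_\mu+b_\nu+\langle\pi_g\mu|\pi_g\nu\rangle=b_\mu+b_\nu+\langle\pi_g\mu|\nu\rangle$, absorbing the stray exponential from the commutation.

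Combining these three cancellations one obtains exactly
\[
U_{(\mu+\nu)g}=\eta_g(\mu+\nu)\,U_{\mu+\nu}\,\exp\!\bigl(2\pi i(b_{\mu+\nu}+(\mu+\nu)_{(0)})\bigr),
\]
which is the twist-compatibility relation (\ref{meq:U_tw}) for $\alpha=\mu+\nu$. There is no real obstacle; the only point to be careful about is the order in which $\exp(2\pi i\,\mu_{(0)})$ is moved across $U_\nu$, since that commutation is precisely what produces the cross term needed to turn $b_\mu+b_\nu$ into $b_{\mu+\nu}$.
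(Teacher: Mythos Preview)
Your proof is correct and follows essentially the same strategy as the paper's: multiply the two twist-compatible operators using (\ref{meq:U_mult}), commute the zero-mode exponential across, and close up with the $\eta$-cocycle relation, the $g$-invariance of $B$, and the polarisation identity for $b$. The only cosmetic difference is that the paper first rewrites twist compatibility in the equivalent form $U_{\alpha(1-g)}=\eta(\alpha)^{-1}\epsilon(\alpha(1-g),\alpha g)B(\alpha(1-g),\alpha g)^{-1}e^{2\pi i b_\alpha}e^{-2\pi i\alpha_{(0)}}$ (Lemma~\ref{cor:tw_comp}) and verifies that form for $\alpha+\beta$, whereas you work directly with (\ref{meq:U_tw}); the underlying computation and the three identities you isolate are the same.
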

Hence if we determine $\lambda(\mu_i)$ for a basis $\mu_i$ of $L_g^{\perp}$ (and hence $N$) such that all $U_{\mu_i}$ satisfy twist compatibility, we can repeatedly apply (\ref{meq:U_mult}) to obtain a full set of twist compatible lattice operators. In particular, we show in Appendix \ref{app:twist} that a general lattice is given by
\begin{equation}
U_{(\sum_i n_i \mu_i)} = \prod_{i<j} \big(\epsilon(\mu_i,\mu_j)B(\mu_i,\mu_j)\big)^{n_in_j}
\prod_i U_{\mu_i}^{n_i}\big(\epsilon(\mu_i,\mu_i)B(\mu_i,\mu_i)\big)^{\frac{n_i(n_i-1)}{2}}.
\end{equation}

Theorem~\ref{thm:lambdabasis} gives an expression for the $\lambda$ on a basis of $N$. From this we can construct $U_{\mu_i}$ for a basis $\mu_i$, and thus by (\ref{meq:B_def}) all the operators $U_\mu$.

\subsection{A basis for $W_\gl$}
Finally let us give a basis for $W_{\gl}$ starting from $(\tilde\rho, \Omega_0)$.
First we find a lattice $\Lambda$ such that
\be\label{Lambda}
L= \Lambda + L_g^\perp
\ee
such that we can decompose $\alpha=\gamma+\mu$. Note that we cannot necessarily choose $\Lambda$ to be $L\cap \pi_g \R^d$.
Let $\{e_{\chi}:\chi \in \check A\}$ be a basis for $\Omega_0$. We can reconstruct $W_{\gl}$ by

\begin{thm}
	A basis for $W_{\gl}$ is given by
	\begin{equation}\label{Wgbasis}
		\{ v^1_{-m_1}\ldots v^k_{-m_k} U_{\gamma} e_{\chi}: v^i_{-m_i} \in \hat{\mathfrak{h}}_g, m_i > 0, \gamma \in \Lambda, \chi \in \check A \}.
	\end{equation}
	The weight of such a state is given by
	\be
	m_1 + \ldots + m_k + |(\pi_g \gamma)|^2/2 + \rho_g,
	\ee
	recovering the familiar result for the character.
\end{thm}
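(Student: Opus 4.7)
The plan is to build the basis in three stacked layers---the twisted Heisenberg PBW basis of $W_{\gl}$ over its vacuum subspace $\Omega$, then a decomposition of $\Omega$ into blocks indexed by $\Lambda$, and finally the given basis $\{e_\chi\}$ of $\Omega_0$---and then to read off the $L_0$-eigenvalue of the resulting monomials.

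For layer one, I would invoke the structural result recalled above from \cite{MR2172171}, which asserts that $W_{\gl}$ is freely induced from $\Omega$ as an $\hat{\mathfrak h}_g$-module by the creation modes $v_{-m}$ with $m>0$. These modes mutually commute (the Heisenberg relation $[v_m,w_n]=m\langle v|w\rangle\delta_{m+n,0}$ vanishes whenever both indices are negative), so Poincar\'e--Birkhoff--Witt yields a basis consisting of ordered monomials $v^1_{-m_1}\cdots v^k_{-m_k}$ applied to any fixed basis of $\Omega$.

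For layer two, the eigenspace decomposition $\Omega=\bigoplus_{\alpha\in\pi_gL}\Omega_\alpha$ combined with the shift rule $U_\beta\Omega_\alpha=\Omega_{\alpha+\pi_g\beta}$ reduces the task to selecting one representative per summand. Since $\pi_gL_g^\perp=0$ we have $\pi_gL=\pi_g\Lambda$, and $L_g^\perp=L\cap\ker\pi_g$ makes $\pi_g$ descend to an isomorphism $L/L_g^\perp\cong\pi_gL$. Choosing $\Lambda$ as a transversal for this quotient makes $\pi_g|_\Lambda$ injective, so that $\gamma\mapsto\Omega_{\pi_g\gamma}$ parametrizes the summands bijectively. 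Each $U_\gamma\in G_{\gl}$ is invertible and hence sends $\Omega_0$ isomorphically onto $\Omega_{\pi_g\gamma}$, transporting $\{e_\chi:\chi\in\check A\}$ to a basis $\{U_\gamma e_\chi\}$ of that summand. Combining with layer one reproduces exactly (\ref{Wgbasis}).

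For the weight, each mode $v_{-m}$ contributes $m$ to $L_0$ by Definition~\ref{def:heisenberg}; on $\Omega_{\pi_g\gamma}$ the zero-mode piece $\tfrac12\sum_i(h_i^{(0)})^2$ of $L_0$ acts as $|\pi_g\gamma|^2/2$ (the $h_i^{(0)}$ act by the eigenvalues $\langle h_i|\pi_g\gamma\rangle$ on this summand, and the $h_i$ are orthonormal in $\mathfrak h_0$); and the non-zero-mode part of $L_0$ takes its minimal value $\rho_g$ on all of $\Omega$ by the definition of the twisted conformal weight. Summing these three contributions gives the stated weight. The main obstacle, flagged in the theorem's own caveat, is the choice of $\Lambda$: one cannot always take $\Lambda=L^g$, because $L^g+L_g^\perp$ may be a proper sublattice of $L$; one must instead select $\Lambda$ as a genuine transversal of $L/L_g^\perp$. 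Once this is done, linear independence in (\ref{Wgbasis}) follows from the invertibility of the $U_\gamma$ and the linear independence of the $e_\chi$, while spanning follows from $L=\Lambda+L_g^\perp$.
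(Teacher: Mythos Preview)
The paper does not supply a separate proof for this theorem; it is stated as a summary of the induced-module construction laid out in the preceding paragraphs (the $\hat{\mathfrak h}_g$-induction from $\Omega$ and the $G_{\gl}$-induction of $\Omega$ from $\Omega_0$, both taken from \cite{MR2172171}). Your three-layer argument makes precisely this implicit reasoning explicit and is correct; it is the same approach the paper relies on, just written out in full.
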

A general twisted lattice operator $U_\alpha$ then acts on (\ref{Wgbasis}) in the obvious way.

\section{Twining Characters}\label{s:chars}

\subsection{Twining elements}
Let $V$ be a $C_2$-cofinite VOA, $\gl$ and $\hl$ automorphisms of $V$ such that $\gl\hl = \hl\gl$ and $(W_{\gl},Y_W(\cdot,x))$ an irreducible $\gl$-twisted $V$-module.
Then by the results of \cite{1997q.alg.....3016D, MR2023933,MR3715704}
	there exists a  linear map $\phi_\gl(\hl):W_{\gl} \to W_{\gl}$ such that
	\be\label{eq:phi3}
	\phi_\gl(\hl)Y_W(v,x)\phi_\gl(\hl)^{-1} = Y_W(\hl v,x)\ ,
	\ee
which is unique up to multiplication by a scalar. $\phi_\gl(\cdot)$ is a projective representation of $C_\gl$.
We need to explicitly work out what this representation is for the construction at hand.

Let us now insert a twining element $\hat h\in C_\gl$.
Acting with $\phi_\gl(\hl)$ on a vector in (\ref{Wgbasis}) we can use (\ref{eq:phi3}) to find
\be
\phi_\gl(\hl) U_\alpha \phi_\gl(\hl)^{-1}= \eta_h(\alpha) U_{\alpha h}
\ee
and
\be
\phi_\gl(\hl) v_{-n} \phi_\gl(\hl)^{-1} = (v\cdot h)_{-n}\ .
\ee
We therefore have
\begin{lem}
	Let $\hat g, \hat h \in \mathrm{Aut}(V_L)$ be two automorphisms such that $\hat g \hat h = \hat h \hat g$ lifted from commuting lattice automorphisms $g,h \in \mathrm{Aut}(L)$. Then $W_{\gl}$ carries an action $\phi_{\hat g}(\hat h)$ of $\hat h$ given by
	\begin{equation}
	\phi_{\hat g}(\hat h)v_{-n_1}\ldots v_{-n_k} U_{\alpha} e_{\chi}
	= (vh)_{-n_1}\ldots (vh)_{-n_k} \eta_h(\alpha)^{-1}U_{\alpha h} \phi_{\hat g}(\hat h)|_{\Omega_0}e_{\chi}
	\end{equation}
\end{lem}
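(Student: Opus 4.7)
The plan is to derive the explicit formula by iteratively pushing $\phi_\gl(\hl)$ past each operator in a spanning vector $v^1_{-n_1}\cdots v^k_{-n_k} U_\alpha e_\chi$ of (\ref{Wgbasis}), using the two conjugation identities displayed immediately before the lemma statement,
\begin{equation*}
\phi_\gl(\hl)\, U_\alpha\, \phi_\gl(\hl)^{-1} = \eta_h(\alpha)\, U_{\alpha h}, \qquad \phi_\gl(\hl)\, v_{-n}\, \phi_\gl(\hl)^{-1} = (vh)_{-n}.
\end{equation*}
Both identities are immediate consequences of the defining relation (\ref{eq:phi3}): the second follows mode by mode from $Y_W(v,x)=\sum_m v_m x^{-m-1}$ together with $\hl v = vh$ for $v \in \mathfrak{h}$, while the first follows from the standard-lift action $\hl e_\alpha = \eta_h(\alpha)^{-1} e_{\alpha h}$ on the lattice generators after matching the mode structure of $Y_W(e_\alpha,x)$ with its twisted lattice operator $U_\alpha$.

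With these in hand I would iterate: pull $\phi_\gl(\hl)$ all the way to the right past each $v^i_{-n_i}$, replacing it by $(v^i h)_{-n_i}$, and then past $U_\alpha$, picking up the factor $\eta_h(\alpha)^{-1}$ and turning $U_\alpha$ into $U_{\alpha h}$. What is left on the right is the action of $\phi_\gl(\hl)$ inside the vacuum subspace $\Omega_0$ on $e_\chi$, which is precisely the factor $\phi_\gl(\hl)|_{\Omega_0}\, e_\chi$ in the claimed formula. No additional identity beyond the two above enters the propagation.

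What does require justification is that such a restriction to $\Omega_0$ actually exists. Because $gh=hg$, the lattice automorphism $h$ commutes with the projector $\pi_g$ and hence preserves both $L_g^\perp$ and $L(1-g)$, so it descends to an automorphism of the finite abelian group $\FA = L_g^\perp/L(1-g)$ which preserves the skew form $C(\cdot,\cdot)$. Pulling the defect representation $(\tilde\rho,\Omega_0)$ back along this induced automorphism yields another irreducible projective representation of $\FA$ with the same commutator $C$; by the uniqueness statement for such representations established in section~\ref{s:twistedmodules}, it is projectively equivalent to $\tilde\rho$, and Schur's lemma supplies an intertwiner on $\Omega_0$, unique up to a scalar. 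This scalar is exactly the residual projective ambiguity inherent in $\phi_\gl(\hl)$ itself.

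The main obstacle is therefore not the structural statement of the lemma, which merely records how the action propagates through a basis vector, but the actual determination of $\phi_\gl(\hl)|_{\Omega_0}$. Fixing its normalisation so that $\phi_\gl(\cdot)$ becomes a projective representation of $C_\gl$ with the 2-cocycle (\ref{descend}) is the genuine work, and it is to be handled in the next subsection by combining the explicit description of $\tilde\rho$ from section~\ref{s:twistedmodules} with Schur's lemma applied to the action of $\FA$ on $\Omega_0$.
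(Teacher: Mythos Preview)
Your proposal is correct and takes essentially the same approach as the paper: the paper offers no separate proof, but simply records the two conjugation identities for $\phi_\gl(\hl)$ acting on $U_\alpha$ and on $v_{-n}$ and then states the lemma as an immediate consequence, which is precisely your iteration argument. Your added paragraph on the existence of $\phi_\gl(\hl)|_{\Omega_0}$ via Schur's lemma and uniqueness of the defect representation anticipates what the paper spells out in the paragraph immediately following the lemma.

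One small remark: the displayed identity you quote, $\phi_\gl(\hl)\,U_\alpha\,\phi_\gl(\hl)^{-1}=\eta_h(\alpha)\,U_{\alpha h}$, is copied verbatim from the paper but is inconsistent with both the action $\hl e_\alpha=\eta_h(\alpha)^{-1}e_{\alpha h}$ (which you correctly cite) and the $\eta_h(\alpha)^{-1}$ appearing in the lemma itself. Applying (\ref{eq:phi3}) to $Y_W(e_\alpha,x)$ actually gives $\phi_\gl(\hl)\,U_\alpha\,\phi_\gl(\hl)^{-1}=\eta_h(\alpha)^{-1}U_{\alpha h}$, and with that sign your push-through argument produces the stated formula on the nose. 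This is a typo in the paper rather than a flaw in your reasoning.
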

We thus need to find the action of $\phi_\gl(\hl)$ on $\Omega_0$. Note that  $\phi_\gl(\hl)|_{\Omega_0}$ is a $d(\gl)\times d(\gl)$-matrix $O_\hl \in GL(\Omega_0)$. To compute $O_\hl$,
we note that $\tilde \rho(\cdot h)$ is again an irreducible representation of $L_g^\perp$ of dimension $d(g)$ with the same cocycle, and in fact also an irreducible representation of $N$. Since there is only one irreducible representation of dimension $d(g)$, Schur's lemma tells us that $\tilde \rho(\cdot h)$ and $\tilde \rho(\cdot)$ are related by a change of basis automorphism $O_{\hl}$ on $\Omega$, which is unique up to an overall factor in  $\C^*$. From this it follows immediately that  $\phi_\gl|_{\Omega_0} : \hl \mapsto O_{\hl}$ is indeed a projective representation of $C_\gl$ on $\Omega_0$.

To construct $O_\hl$, we construct $U_{\mu_i}|_{\Omega_0}$ for a basis of $L^\perp_g$ and then use
\be\label{Oh}
O_\hl U_{\mu_i} O_\hl^{-1} = \eta_h(\mu_i) U_{\mu_i h}
\ee
to obtain constraints on the matrix $O_\hl$. Schur's lemma guarantees that if we repeat this procedure for enough $\mu_i$, it will fix $O_\hl$ up to an overall phase $\C^*$. Repeating this for all $\hl\in C_g$ gives us the projective representation $\phi_\gl|_{\Omega_0}$.

From the matrices $O_h$ we can read off the cocycle of the representation $c_{\hat g}(\hat h_1,\hat h_2)$. In principle we could now try to reconstruct the obstruction cocycle $\omega$. Instead we will do something simpler.
We are interested only in orbifolds which allow a holomorphic extension, that is for which $\omega$ is trivial. In such a case we can choose $c_{\hat g}(\hat h_1,\hat h_2)=1$. To see if this is compatible with our orbifold, we try to find coboundaries such that $c_{\hat g}(\hat h_1,\hat h_2)=1$. This means that $\omega$ can only be trivial if all projective representations are secretly linear representations.

\subsection{Twining characters}
We now have all the ingredients to compute the twining character
\be
T(\hat g,\hat h;\tau)= \textrm{tr}_{W_{\gl}} \phi_{\hat g}(\hat h) q^{L_0-c/24}
\ee

\begin{thm}
	Let $\hat h$ be a $V_L$-automorphism that commutes with $\hat g$ and $\phi_{\hat g}(\hat h)$ the corresponding action on $W_{\gl}$. Then states of the form
	\be
	\{v_{-n_1}\ldots v_{-n_k} U_{\alpha} e_{\chi}: n_1, \ldots, n_k \in ..., \alpha \in \Lambda, e_{\chi} \in \C[\hat A], v_{-n_i} \in \hat{\mathfrak{h}}_g^{-}\}
	\ee
	contribute to the trace of $\phi_{\hat g}(\hat h)$ only if $\alpha \in \Lambda$ is such that
	\be\label{eq:lambda_h}
	\pi_g(\alpha) = \pi_g(h \alpha),
	\ee
	that is $\pi_g(\alpha) \in \mathfrak{h}^0_g \cap \mathfrak{h}^0_h$ or equivalently $(1-h)\alpha \in L_g^{\perp}$. We denote the sublattice of $\Lambda$ such that Equation \ref{eq:lambda_h} is satisfied by $\Lambda_h$.
\end{thm}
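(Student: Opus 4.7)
The plan is to use the explicit action of $\phi_{\hat g}(\hat h)$ on the basis (\ref{Wgbasis}) computed in the preceding lemma, and to identify when the diagonal matrix element along a basis vector can be nonzero. The key structural input is the $\Omega$-eigenspace decomposition $\Omega=\bigoplus_{\beta\in\pi_g(L)}\Omega_\beta$ with $U_\gamma\Omega_\beta = \Omega_{\beta + \pi_g\gamma}$: since the basis (\ref{Wgbasis}) is compatible with this grading, basis vectors in different $\Omega$-eigenspaces cannot contribute to each other's diagonal matrix element, and the diagonal contribution of $s$ vanishes whenever $\phi_{\hat g}(\hat h)s$ leaves the $\Omega$-eigenspace of $s$.

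Concretely, I would fix $s = v^1_{-n_1}\cdots v^k_{-n_k}\,U_\alpha\,e_\chi$ and invoke the lemma to write
\begin{equation*}
\phi_{\hat g}(\hat h)\,s = (v^1 h)_{-n_1}\cdots (v^k h)_{-n_k}\,\eta_h(\alpha)^{-1}\,U_{\alpha h}\,O_{\hat h}\,e_\chi .
\end{equation*}
Since $O_{\hat h}$ preserves $\Omega_0$, the vacuum factor $U_{\alpha h}\,O_{\hat h}\,e_\chi$ lies in $\Omega_{\pi_g(\alpha h)}$, while that of $s$ lies in $\Omega_{\pi_g(\alpha)}$. Because $\Lambda$ is chosen so that the projection $\gamma \mapsto \pi_g\gamma$ is injective on $\Lambda$, decomposing the image in the basis (\ref{Wgbasis}) can yield a $U_\alpha e_{\chi'}$ term only if $\pi_g(\alpha) = \pi_g(\alpha h)$; this is the necessary condition on $\alpha$.

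It then remains to record the equivalent reformulations. Rewriting $\pi_g(\alpha) = \pi_g(\alpha h)$ as $\pi_g((1-h)\alpha)=0$ and using that $(1-h)\alpha \in L$ gives $(1-h)\alpha\in L\cap\ker\pi_g = L_g^\perp$. For the other form, the commutativity $\hat g\hat h = \hat h\hat g$ forces $[g,h]=0$ on $\mathfrak{h}$, so $h$ preserves the $g$-eigenspaces and thus commutes with the spectral projector $\pi_g$; the condition $\pi_g(\alpha) = h\pi_g(\alpha)$ then says that $\pi_g\alpha$ is $h$-invariant, and combined with $\pi_g\alpha\in\mathfrak{h}^0_g$ yields $\pi_g\alpha\in\mathfrak{h}^0_g\cap\mathfrak{h}^0_h$. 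No substantial obstacle is anticipated; the only point worth flagging is the commutation $[h,\pi_g]=0$, which is immediate from $[g,h]=0$.
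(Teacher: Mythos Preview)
Your proposal is correct and follows essentially the same approach as the paper: both arguments reduce to the observation that the basis vector indexed by $\alpha$ can only contribute to the trace if the class of $\alpha h$ equals that of $\alpha$ modulo $L_g^\perp$, which is exactly the condition $\pi_g(\alpha)=\pi_g(\alpha h)$. The paper's own proof is much terser and leaves implicit precisely the grading argument (via the $\Omega$-eigenspace decomposition) that you spell out, and it does not separately verify the equivalent reformulations; your version is a more complete rendering of the same idea.
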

\begin{proof}
	In order for the state to contribute to the character we require that
	\begin{equation}
	[\alpha h] = [\alpha],
	\end{equation}
	where $[\alpha]$ denotes the class of $\alpha$ in $\frac{L}{L_g^{\perp}}$. In other words we have that
	\begin{equation}
	\alpha h = \alpha + \beta^{\perp},
	\end{equation}
	for some $\beta^{\perp} \in L_g^{\perp}$.
\end{proof}

\begin{thm}
	The lattice contribution to the twining character is given by
	\begin{equation}
	T_L(\hat g,\hat h,\tau) = \sum_{\alpha \in \Lambda_h} q^{\frac{|\alpha_0|^2}{2}}\eta_h(\alpha)^{-1}\epsilon_g(\alpha,\alpha(h-1))B_g(\alpha,\alpha(h-1))\mathrm{Tr}|_{\Omega}\big(U_{\alpha(h-1)}\phi_{\hat g}(\hat h)\big).
	\end{equation}
\end{thm}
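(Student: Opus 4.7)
The plan is to evaluate the trace of $\phi_{\hat g}(\hat h)\,q^{L_0-c/24}$ on the zero-mode factor of the basis (\ref{Wgbasis}), the oscillator trace having been factored off into a separate Heisenberg contribution.  Decomposing the zero modes as $\bigoplus_{\alpha\in\Lambda} U_\alpha\Omega_0$, the operator $L_0$ acts on $U_\alpha\Omega_0$ as $\tfrac12|\pi_g\alpha|^2+\rho_g$, producing the $q^{|\alpha_0|^2/2}$ weight once $\rho_g$ is absorbed into $-c/24$ in the standard way.

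By the preceding lemma the action on a zero-mode state reads
\[
\phi_{\hat g}(\hat h)\,U_\alpha e_\chi \;=\; \eta_h(\alpha)^{-1}\,U_{\alpha h}\,\phi_{\hat g}(\hat h)|_{\Omega_0}\,e_\chi,
\]
which carries $U_\alpha\Omega_0$ into $U_{\alpha h}\Omega_0$; since these subspaces are linearly independent for distinct cosets of $\Lambda/(\Lambda\cap L_g^\perp)$, only states with $\alpha h-\alpha=\alpha(h-1)\in L_g^\perp$ can contribute diagonally.  This is precisely the restriction $\alpha\in\Lambda_h$ obtained in the previous theorem.

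For such $\alpha$ the next step is to rewrite $U_{\alpha h}=U_{\alpha+\alpha(h-1)}$ via the product rule (\ref{meq:U_mult}), producing a scalar factor involving $\epsilon_g(\alpha,\alpha(h-1))$ and $B_g(\alpha,\alpha(h-1))$ times $U_\alpha\,U_{\alpha(h-1)}$.  Since $\alpha(h-1)\in L_g^\perp$, the operator $U_{\alpha(h-1)}$ preserves $\Omega_0$, so $U_{\alpha(h-1)}\,\phi_{\hat g}(\hat h)|_{\Omega_0}$ is an endomorphism of $\Omega_0$.  In the basis $\{U_\alpha e_\chi\}_{\chi\in\check A}$ of $U_\alpha\Omega_0$, the outer $U_\alpha$ is merely the basis label, so taking the $\chi$-trace yields $\mathrm{Tr}|_{\Omega_0}\bigl(U_{\alpha(h-1)}\,\phi_{\hat g}(\hat h)|_{\Omega_0}\bigr)$; summing over $\alpha\in\Lambda_h$ then gives the stated formula.

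The main obstacle will be the bookkeeping of phases.  One must combine $\eta_h(\alpha)^{-1}$ from the lemma with the cocycle factors produced by (\ref{meq:U_mult}) and the twist-compatibility correction (\ref{meq:U_tw}), and verify both that the signs match the conventions used to define $\epsilon_g$ and $B_g$ (each expansion of $U_{\alpha+\beta}$ via (\ref{meq:U_mult}) costing an inverse of the defining cocycle), and that the resulting summand is independent of the chosen representative of $\alpha$ in its $\Lambda$-coset modulo $L_g^\perp$, so that the sum over $\Lambda_h$ is well-defined.
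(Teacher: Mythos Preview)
Your approach is essentially the same as the paper's: apply the preceding lemma to get $\phi_{\hat g}(\hat h)U_\alpha e_\chi = \eta_h(\alpha)^{-1}U_{\alpha h}\phi_{\hat g}(\hat h)|_{\Omega_0}e_\chi$, then use the multiplication rule (\ref{meq:U_mult}) to rewrite $U_{\alpha h}=U_{\alpha+\alpha(h-1)}$ as a scalar times $U_\alpha U_{\alpha(h-1)}$, and read off the trace. The paper's proof is in fact just those two lines; it does not invoke the twist-compatibility relation (\ref{meq:U_tw}) at all here, nor does it address the representative-independence issue you flag, so your extra caution about phase bookkeeping goes beyond what the paper records.
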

\begin{proof}
	For $\alpha \in \Lambda_h$ we find that
	\begin{align}
	\phi_{\hat g}(\hat h)v_{-n_1}\ldots v_{-n_k} U_{\alpha} e_{\chi}
	& = (v h)_{-n_1}\ldots (v h)_{-n_k} \eta_h(\alpha)^{-1}U_{\alpha h} \phi_{\hat g}(\hat h)e_{\chi} \\
	& = (v h)_{-n_1}\ldots (v h)_{-n_k} \eta_h(\alpha)^{-1}\epsilon_g(\alpha,\alpha(h-1))B_g(\alpha,\alpha(h-1)) U_{\alpha} U_{\alpha(h-1)}\phi_{\hat g}(\hat h)e_{\chi}.
	\end{align}
\end{proof}
\begin{note}
	If $\alpha(h-1) \in L_g^{\perp}$ then $U_{\alpha(h-1)}\phi_{\hat g}(\hat h)e_{\chi} \in \Omega_0$.
\end{note}

\begin{thm}
	Let $\Psi$ be the set of pairs $(i,j)$ such that $\xi_n^i$ and $\xi_m^j$ are simultaneous eigenvalues of $g$ and $h$.

	The Heisenberg contribution to the twining character is given by
	\begin{equation}
	T_H(\hat g,\hat h,\tau) = q^{\rho_g} \prod_{(i,j) \in \Psi}\prod_{k=0}^{\infty} \frac{1}{1-\xi_m^j q^{k+\frac{i}{n}}},
	\end{equation}
	where the conformal weight $\rho_g$ is given by
	\begin{equation}
		\rho_g = \sum_j \frac{j(n-j)}{4n^2}\mathrm{dim}(\mathfrak{h}_j),
	\end{equation}
	where $\mathfrak{h}_j$ is the eigenspace of $g$ to the eigenvalue $\xi_n^{-j}$.
\end{thm}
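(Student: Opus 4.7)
The plan is to reduce the computation to a tensor product of single-oscillator bosonic traces by simultaneously diagonalising $g$ and $h$ on $\mathfrak{h}=L\otimes_\Z\C$. Since $\gl\hl=\hl\gl$ and both automorphisms are lifts of commuting lattice automorphisms, $g$ and $h$ commute on $\mathfrak{h}$. I would therefore pick a basis of simultaneous eigenvectors $\{v^{(i,j)}\}$ with $gv^{(i,j)}=\xi_n^{-i}v^{(i,j)}$ and $hv^{(i,j)}=\xi_m^{j}v^{(i,j)}$; with multiplicity $\dim(\mathfrak{h}_i\cap\mathfrak{h}^h_j)$, the pair $(i,j)$ ranges exactly over $\Psi$.

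By the explicit basis of $W_{\gl}$ obtained in the previous section, the Heisenberg contribution to the trace comes from the polynomial algebra on the creation modes $v^{(i,j)}_{-m}$, $m\in\tfrac{i}{n}+\Z_{>0}$, acting on the vacuum $\Omega$. Because modes attached to distinct basis eigenvectors commute freely, this factorises as a tensor product of one-direction Fock spaces indexed by $(i,j)\in\Psi$. On each factor, the twining identity $\phi_\gl(\hl)\,v_{-m}\,\phi_\gl(\hl)^{-1}=(vh)_{-m}$ established at the start of this section shows that $\phi_\gl(\hl)$ acts by $\xi_m^j$ on every creation mode, while $L_0$ scales $v^{(i,j)}_{-m}$ by $m$. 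The standard bosonic oscillator trace then yields
$$\mathrm{Tr}\,\phi_\gl(\hl)\,q^{L_0}\big|_{(i,j)}=\prod_{m\in\tfrac{i}{n}+\Z_{>0}}\frac{1}{1-\xi_m^j q^m}=\prod_{k=0}^\infty\frac{1}{1-\xi_m^j q^{k+i/n}},$$
and taking the tensor product over $\Psi$ produces the infinite product in the statement.

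The prefactor $q^{\rho_g}$ is the $L_0$-eigenvalue of the twisted vacuum $\Omega$, fixed by the normal-ordering shift of the twisted Virasoro generator in the free-boson $g$-twisted sector: each eigenspace $\mathfrak{h}_j$ with eigenvalue $\xi_n^{-j}$ ($0\le j<n$) contributes $\frac{j(n-j)}{4n^2}\dim\mathfrak{h}_j$, reproducing the stated formula for $\rho_g$. The only bookkeeping subtlety is the convention at $i=0$, where the lowest creation mode is $v_{-1}$ rather than $v_0$; taking $i$ to run over $\{1,\dots,n\}$ so that $g$-invariant directions are labelled by $i=n$ and contribute factors $q^{k+1}$ aligns the displayed product uniformly. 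Since the whole argument is just the factorisation of a twisted-twined free-boson character into independent oscillator towers, I do not anticipate substantive obstacles beyond this indexing convention.
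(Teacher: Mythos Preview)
Your proposal is correct and follows essentially the same approach as the paper: simultaneously diagonalise $g$ and $h$ on $\mathfrak{h}$, factor the Heisenberg Fock space into single-oscillator towers indexed by simultaneous eigenvectors, and evaluate each as a geometric series, then multiply. Your treatment is in fact slightly more careful than the paper's, which does not comment on the $i=0$ indexing issue or the origin of $\rho_g$.
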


\begin{proof}
	Let $v \in \mathfrak{h}$ be a simultaneous eigenvector of $g$ and $h$ with eigenvalues $\xi_n^i$ and $\xi_m^j$, respectively. Then by Definition \ref{def:heisenberg} the weights of the operators corresponding to $v$ in $\hat{\mathfrak{h}}_g$ are given by $\frac{i}{n} + \Z_{\geq 0}$. Then the contribution to the trace of $h$ due to descendants of $v$ is given by
	\begin{align}
		T_v(\hat g,\hat h;\tau) & = q^{\rho_g}(1 + \xi_m^j q^{\frac{i}{n}} + \xi_m^{2j} q^{\frac{2i}{n}} + \ldots)(1 + \xi_m^j q^{1 + \frac{i}{n}} + \xi_m^{2j} q^{2+\frac{2i}{n}} + \ldots)\ldots \\
		& = \prod_{k=0}^{\infty} \frac{1}{1-\xi_m^j q^{k+\frac{i}{n}}}.
	\end{align}
	The desired result follows by multiplying over all simultaneous eigenvectors.
\end{proof}

The character of $\phi_{\hat g}(\hat h)$ on the twisted module $W_{\gl}$ is then given by
\begin{equation}
	T(\hat g, \hat h; \tau) = T_H(\hat g,\hat h,\tau)	T_L(\hat g,\hat h,\tau).
\end{equation}

\section{Warm-up: Orbifolds of $E8$}\label{s:E8}

As a warm-up, let us discuss orbifolds of the $E8$ lattice VOA. This is the smallest even unimodular lattice, which makes our computation easier.
This case is strongly constrained, since the only holomorphic VOA of central charge 8 is the $E8$ lattice VOA. Any orbifold thus only has two possible outcomes: either $\omega$ is not trivial, so that there is no holomorphic extension, or we recover the original unorbifolded VOA. We will give an example for both possibilities.

Investigating all $62092$ conjugacy classes of subgroups of $\mathrm{Aut}(L_{E8})$, we find that under standard lifts only $14$ are non-anomalous of which $10$ are cyclic. The non-cyclic ones are isomorphic to $\Z_3 \times \Z_3$, $\Z_5 \times \Z_5$, $\Z_3\times\Z_6$ and $\Z_5\times\Z_{10}$. We will investigate the smallest one of these as well as anomalous groups isomorphic to $S_3$ and $\Z_2 \times \Z_2$.

\subsection{$S_3$}
First consider an orbifold of the $E8$ lattice VOA by a group $S_3=\Z_3\rtimes_{-1}\Z_2$ generated by elements $s$ and $t$ of cycle types $C_s = 2^4$ and $C_t = 1^23^2$ (see equation \ref{eq:cycle_type} for a definition of the cycle type).  $S_3$ has three conjugacy classes, $[e]$, $[s]$, $[t]$, containing elements of order 1,2, and 3 respectively.
From (\ref{cohompeven}) we have
$H^3(S_3,U(1))=\Z_6$.
The cyclic subgroups generated by $s$ and $t$ have types $r_{s}=1$ and $r_{t}=2$, from which it follows that $\omega$ is non-trivial. We can therefore not obtain a holomorphic orbifold. However, we can still obtain the twining characters for $S_3$
\begin{align}
T(e,e,\tau)  &= \frac{\theta_{\E}(\tau)}{\eta(\tau)^8} &
T(e,s,\tau)  &= \frac{\theta_{D_4}(\tau)}{\eta(2\tau)^4} &
T(e,t,\tau)  &= \frac{\theta_{A_2}(\tau)^2}{\eta(\tau)^2\eta(3\tau)^2} \\
T(s,e,\tau) &= 2\frac{\theta_{D_4}(\frac{\tau}{2})}{\eta(\frac{\tau}{2})^4} &
T(s,s,\tau)  &= 2\xi_{3}\frac{\theta_{D_4}(\frac{\tau+1}{2})}{\eta(\frac{\tau+1}{2})^4} & \\
T(t,e,\tau) &= \frac{\theta_{A_2}(\frac{\tau}{3})^2}{\eta(\tau)^2\eta(\frac{\tau}{3})^2} &
T(t,t,\tau)  &=
\xi_3\frac{\theta_{A_2}(\frac{\tau+1}{3})^2}{\eta(\tau+1)^2\eta(\frac{\tau+1}{3})^2} &
T(t,t^2,\tau)  &= \xi_3^2\frac{\theta_{A_2}(\frac{\tau+2}{3})^2}{\eta(\tau+2)^2\eta(\frac{\tau+2}{3})^2}
\end{align}

Let us now write down the characters of the irreducible modules.
There are $3$ irreducible representation of $S_3$: The trivial, signum and $2$-dimensional standard representation. The other centralizers have the usual cyclic characters. Using the irreducible characters with the 2-cocycles descended from $\omega$, we find
\begin{align}
\chi_{e,0}(\tau) & = \frac{T(e,e,\tau) + 3T(e,s,\tau) + 2T(e,t,\tau)}{6} \\
\chi_{e,\mathrm{sgn}}(\tau) & = \frac{T(e,e,\tau) - 3T(e,s,\tau) + 2T(e,t,\tau)}{6} \\
\chi_{e,2}(\tau) & = \frac{T(e,e,\tau) - T(e,t,\tau)}{3} \\
\chi_{s,0}(\tau)& = \frac{T(s,e,\tau)  - \xi_4 T(s,s,\tau)}{2}\\
\chi_{s,1}(\tau)& = \frac{T(s,e,\tau) + \xi_4 T(s,s,\tau)}{2}\\
\chi_{t,0}(\tau)& = \frac{T(t,e,\tau) + \xi_9^{-2} T(t,t,\tau) + \xi_9^{-4} T(t,t^2,\tau)}{3}\\
\chi_{t,1}(\tau)& = \frac{T(t,e,\tau) + \xi_9 T(t,t,\tau) + \xi_9^2 T(t,t^2,\tau)}{3}\\
\chi_{t,2}(\tau)& = \frac{T(t,e,\tau) + \xi_9^4 T(t,t,\tau) + \xi_9^{-1}T(t,t^2,\tau)}{3},
\end{align}
where $\xi_n$ denotes a primitive $n$-th root of unity.
Using the ordering above, the transformation matrices under $T$ and $S$ agree with the ones obtained in \cite{Dijkgraaf:1989hb,MR1770077 } with $p=1$.

\subsection{$\Z_2\times \Z_2$}
Next we consider an orbifold of the $E8$ lattice VOA by a group $\Z_2 \times \Z_2$ generated by elements $g$ and $h$ both of cyclic type $C_g = C_h = 2^4$.
The types of the cyclic subgroups generated by $g, h$ and $gh$ are given $r_g = r_h = 1$ and $r_{gh} = 0$, respectively. Thus by theorem \ref{H3ZpZp} the cohomological twist $\omega$ is non-trivial.
We can of course still write down the twisted twining characters.
There are 16 of them, which form the following orbits under $\mathrm{SL}(2,\Z)$-transformations:
\begin{multline}
\{T(e,e;\tau) \}\ , \{T(g,e;\tau), T(e,g;\tau), T(g,g;\tau) \}\ ,
\{T(h,e;\tau), T(e,h;\tau), T(h,h;\tau) \}\ , \\
\{T(gh,e;\tau), T(e,gh;\tau), T(gh,gh;\tau) \}\ ,
\{T(g,h;\tau), T(g,gh;\tau), T(h,g;\tau), T(h,gh;\tau), T(gh,g), T(gh,h)  \}.
\end{multline}
Of those orbits, all but the last one intersect the untwisted sector and can be computed using cyclic orbifolds.

For the last orbit however, we need to use our full algorithm: that is, we construct the $g$ and $h$-twisted modules, compute the projective representations $\phi_{g,h}(\cdot)$, and compute the twining characters as described in sections~\ref{s:twistedmodules} and \ref{s:chars}.
By matching the first few coefficients of the expressions we compute, we find experimentally that
\be
T(g,h;\tau) = T(h,g;\tau)= 2\left(\frac{\eta(\tau)^2}{\eta(\tau/2)\eta(2\tau)}\right)^4
\ee
and
\be
T(g,gh;\tau) = T(h,gh)= e^{2\pi i/12}2
\left(\frac{\eta(\tau)^2}{\eta(\tau/2+1/2)\eta(2\tau)}\right)^4\ .
\ee
The product $gh$ however is not a standard lift, so that we leave the construction of the $gh$-twisted module and its characters future work. We conjecture that the characters are given by
\be
T(gh,g;\tau)= T(gh,h;\tau)= e^{-2\pi i/6} \left(\frac{\eta(\tau)^2}{\eta(\tau/2-1/2)\eta(\tau/2)}\right)^4\ .
\ee

\subsection{$\Z_3\times \Z_3$}
Now consider the $\Z_3\times \Z_3$ orbifold generated by two elements $g$ and $h$ of cycle type $1^{-1}3^3$.  We find that all cyclic subgroups have type 0 and hence
\be
\omega = 1\ .
\ee
Again, this orbifold is not
effectively cyclic, and we have $H^2(\Z_3\times \Z_3)=\Z_3\neq 0$, which allows for discrete torsion. This seems to pose a bit of a puzzle: even though discrete torsion seems to allow for different holomorphic extentions, we know that there is only one holomorphic VOA of central charge 8. The resolution to this is  that the contribution of the characters that is controlled by discrete torsion vanishes.

As before,  we can arrange all 81 twining characters into orbits of $\mathrm{SL}(2,\Z)$, giving 6 orbits, one of which is simply $\{T(e,e;\tau)\}$. Of the remaining ones, 4 have length 8 and intersect the untwisted sector, and corresp to the orbifolds by $4$ cyclic subgroups of $\Z_3\times\Z_3$. The representatives for these four orbits $T(e,g;\tau)$, $T(e,h;\tau)$, $T(e,gh;\tau)$ and $T(e,g^2h;\tau)$ are all equal and given by
\begin{equation}
T(e,g;\tau) = T(e,h;\tau) = T(e,gh;\tau) = T(e,g^2h;\tau) = \frac{\theta_{A_2}(\tau)}{\eta(\tau)^{-1}\eta(3\tau)^3}
\end{equation}
 The fifth orbit with representative $T(g,h;\tau)$ has length 48 and does not intersect the untwisted sector. We therefore again need to use our full algorithm to compute twisted modules and their twining characters explicitly. It turns out however that all twining characters are constant. The reason for this is that for all pairs $(g_1,g_2)$ that appear in this orbit, the lattice $\Lambda$ complementary to $L_{g_1}^\perp$ does not have any $g_2$-invariant vectors, so that the generalized theta function is a constant. Moreover the $\eta$ functions turn out to give a constant, so that the overall character is indeed in turn constant. Acting with $T$ on such a character thus simply introduces a third root of unity. When we decompose the orbit into 16 $T$ orbits, each of them is a sum of three roots of unity, which vanishes. The upshot is thus that this orbit, whose contribution is in principle controlled by discrete torsion, never contributes to the character of $V^{orb(G)}$.

There is in fact a quicker way to see this: since this orbit does not intersect the untwisted module, and the conformal weights of the twisted are all strictly positive, no term $q^{-1/3}$ can appear, but only terms with higher powers of $q$. There is no modular function with the right multiplier system and only higher powers, so that it follows that the sum over the orbit necessarily vanishes.

\section{Examples with $c=48$ and $c=72$}\label{s:lattice}
Let us now describe the groups $G$ with respect we orbifold. Extremal lattices in $d=48$ and $72$ and their automorphisms were constructed in \cite{MR1662447,MR1489922,MR2999133,MR3225314}, and their generators can found in \cite{latticedb}. Using MAGMA \cite{MR1484478}, we extracted all subgroups of $Aut(L)$ of the form $\Z_q \rtimes_{\phi} \Z_p$ as in definition~\ref{defGqp} with $q,p$ coprime. We then restrict to groups for which no lifted element $\gl$ had its order doubled. By the procedure in section~\ref{ss:effcyclift}, we can lift them to automorphism groups of $V_L$ which were isomorphic to $\Z_q \rtimes_{\phi} \Z_p$. We then only keep the ones which had $[\omega]=1$. For these we use theorem~\ref{effcycformula} to compute the characters of $V^{orb(G)}$.

In the following tables we list the orbifold characters for all such groups. Different groups can give the same character; in these cases we did not check if the orbifold VOAs just happen to have the same character, or if they are actually isomorphic. In the last column we give the polar and constant part of the character, from which of course the entire character can be reconstructed. In columns two and four we give the cycle type of the cyclic subgroups $\Z_q$ and $\Z_p$ written as $\prod_{t|n}t^{b_t}$ where $b_t$ can be read off from the characteristic polynomial of the generator $g$ of $\Z_n$,
\be{\label{eq:cycle_type}}
\mathcal{P}_g(x) = \prod_{t|n}(x^t-1)^{b_t}\ .
\ee

\subsection{$\Z_q \rtimes_{\phi} \Z_p$-Orbifolds for $d=48$}
\subsubsection{$P_{48p}$}

\begin{center}
	\begin{longtable}{llllll}
		\toprule
		$q$ & $\Z_q$     & $p$ & $\Z_p$ & $\phi$ & $\text{ch}_{V^{\text{orb}(\Z_q \rtimes_\phi\Z_p)}}(q)$ \\ \midrule \endhead
		$23$ & $1^{2}23^{2}$ & $132$ & $2^{1}4^{-1}6^{-1}12^{1}22^{1}44^{1}66^{-1}132^{1}$ & 5 & $q^{-2} + 120 + \mathcal{O}(q)$ \\
		$23$ & $1^{2}23^{2}$ & $66$ & $1^{2}2^{-2}3^{-2}6^{2}11^{2}22^{-2}33^{-2}66^{2}$ & 2 & $q^{-2} + 168 + \mathcal{O}(q)$ \\
		$23$ & $1^{2}23^{2}$ & $44$ & $2^{-2}4^{2}22^{-2}44^{2}$ & 5 & $q^{-2} + 48 + \mathcal{O}(q)$ \\
		$23$ & $1^{2}23^{2}$ & $33$ & $1^{-2}3^{2}11^{-2}33^{2}$ & 2 & $q^{-2} + 48 + \mathcal{O}(q)$ \\
		$23$ & $1^{2}23^{2}$ & $22$ & $1^{-4}2^{4}11^{-4}22^{4}$ & 2 & $q^{-2} + 168 + \mathcal{O}(q)$ \\
		$23$ & $1^{2}23^{2}$ & $11$ & $1^{4}11^{4}$ & 2 & $q^{-2} + 4q^{-1} 168 + \mathcal{O}(q)$ \\
		$23$ & $1^{2}23^{2}$ & $12$ & $2^{12}4^{-12}6^{-12}12^{12}$ & -1 & $q^{-2} + 120 + \mathcal{O}(q)$ \\
		$23$ & $1^{2}23^{2}$ & $4$ & $2^{-24}4^{24}$ & -1 & $q^{-2} + 48 + \mathcal{O}(q)$ \\
		$3$ & $1^{-24}3^{24}$ & $4$ & $2^{-24}4^{24}$ & -1 & $q^{-2} + 576 + \mathcal{O}(q)$ \\
		$11$ & $1^{4}11^{4}$ & $4$ & $2^{-24}4^{24}$ & -1 & $q^{-2} + 96 + \mathcal{O}(q)$ \\
		$11$ & $1^{4}11^{4}$ & $12$ & $2^{12}4^{-12}6^{-12}12^{12}$ & -1 & $q^{-2} + 168 + \mathcal{O}(q)$ \\
		$33$ & $1^{-2}3^{2}11^{-2}33^{2}$ & $4$ & $2^{-24}4^{24}$ & -1 & $q^{-2} + 96 + \mathcal{O}(q)$ \\ \bottomrule
	\end{longtable}
\end{center}

\subsubsection{$P_{48q}$}

\begin{center}
	\begin{longtable}{llllll}
		\toprule
$q$ & $\Z_q$     & $p$ & $\Z_p$ & $\phi$ & $\text{ch}_{V^{\text{orb}(\Z_q \rtimes_\phi\Z_p)}}(q)$ \\ \midrule \endhead
		$47$ & $1^{1}47^{1}$ & $46$ & $1^{-2}2^{2}23^{-2}46^{2}$ & 2 & $q^{-2} + 72 + \mathcal{O}(q)$ \\
		$47$ & $1^{1}47^{1}$ & $23$ & $1^{2}23^{2}$ & 2 & $q^{-2} + 2q^{-1} + 120 + \mathcal{O}(q)$ \\
		$23$ & $1^{2}23^{2}$ & $4$ & $2^{-24}4^{24}$ & -1 & $q^{-2} + 48 + \mathcal{O}(q)$ \\ \bottomrule
	\end{longtable}
\end{center}

\subsubsection{$P_{48m}$}

\begin{center}
	\begin{longtable}{llllll}
		\toprule
$q$ & $\Z_q$     & $p$ & $\Z_p$ & $\phi$ & $\text{ch}_{V^{\text{orb}(\Z_q \rtimes_\phi\Z_p)}}(q)$ \\ \midrule \endhead
		$3$ & $1^{-24}3^{24}$ & $20$ & $2^{6}4^{-6}10^{-6}20^{6}$ & -1 & $q^{-2} + 264 + \mathcal{O}(q)$ \\
		$3$ & $1^{-24}3^{24}$ & $4$ & $2^{-24}4^{24}$ & -1 & $q^{-2} + 576 + \mathcal{O}(q)$ \\
		$15$ & $1^{-4}3^{4}5^{-4}15^{4}$ & $4$ & $2^{-24}4^{24}$ & -1 & $q^{-2} + 192 + \mathcal{O}(q)$ \\
		$5$ & $1^{-12}5^{12}$ & $12$ & $2^{12}4^{-12}6^{-12}12^{12}$ & -1 & $q^{-2} + 360 + \mathcal{O}(q)$ \\
		$5$ & $1^{8}5^{8}$ & $12$ & $2^{12}4^{-12}6^{-12}12^{12}$ & -1 & $q^{-2} + 264 + \mathcal{O}(q)$ \\
		$5$ & $1^{-2}5^{10}$ & $12$ & $2^{12}4^{-12}6^{-12}12^{12}$ & -1 & $q^{-2} + 216 + \mathcal{O}(q)$ \\
		$5$ & $1^{-2}5^{10}$ & $12$ & $2^{12}4^{-12}6^{-12}12^{12}$ & -1 & $q^{-2} + 216 + \mathcal{O}(q)$ \\
		$5$ & $1^{8}5^{8}$ & $4$ & $2^{-24}4^{24}$ & -1 & $q^{-2} + 192 + \mathcal{O}(q)$ \\
		$5$ & $1^{8}5^{8}$ & $4$ & $2^{-24}4^{24}$ & -1 & $q^{-2} + 192 + \mathcal{O}(q)$ \\
		$5$ & $1^{-12}5^{12}$ & $4$ & $2^{-24}4^{24}$ & -1 & $q^{-2} + 288 + \mathcal{O}(q)$ \\
		$5$ & $1^{-2}5^{10}$ & $4$ & $2^{-24}4^{24}$ & -1 & $q^{-2} + 144 + \mathcal{O}(q)$ \\
		$5$ & $1^{-2}5^{10}$ & $4$ & $2^{-24}4^{24}$ & -1 & $q^{-2} + 144 + \mathcal{O}(q)$ \\
		\bottomrule
	\end{longtable}
\end{center}

\subsubsection{$P_{48n}$}

\begin{center}
	\begin{longtable}{llllll}
		\toprule
$q$ & $\Z_q$     & $p$ & $\Z_p$ & $\phi$ & $\text{ch}_{V^{\text{orb}(\Z_q \rtimes_\phi\Z_p)}}(q)$ \\ \midrule \endhead
		$5$ & $1^{-12}5^{12}$ & $52$ & $2^{2}4^{-2}26^{-2}52^{2}$ & -1 & $q^{-2} + 120 + \mathcal{O}(q)$ \\
		$13$ & $1^{-4}13^{4}$ & $20$ & $2^{6}4^{-6}10^{-6}20^{6}$ & -1 & $q^{-2} + 120 + \mathcal{O}(q)$ \\
		$13$ & $1^{-4}13^{4}$ & $12$ & $2^{12}4^{-12}6^{-12}12^{12}$ & -1 & $q^{-2} + 168 + \mathcal{O}(q)$ \\
		$13$ & $1^{-4}13^{4}$ & $12$ & $2^{12}4^{-12}6^{-12}12^{12}$ & -1 & $q^{-2} + 168 + \mathcal{O}(q)$ \\
		$39$ & $1^{2}3^{-2}13^{-2}39^{2}$ & $4$ & $2^{-24}4^{24}$ & -1 & $q^{-2} + 96 + \mathcal{O}(q)$ \\
		$3$ & $1^{-24}3^{24}$ & $52$ & $2^{2}4^{-2}26^{-2}52^{2}$ & -1 & $q^{-2} + 168 + \mathcal{O}(q)$ \\
		$5$ & $1^{-12}5^{12}$ & $28$ & $2^{4}4^{-4}14^{-4}28^{4}$ & -1 & $q^{-2} + 168 + \mathcal{O}(q)$ \\
		$5$ & $1^{-12}5^{12}$ & $28$ & $2^{4}4^{-4}14^{-4}28^{4}$ & -1 & $q^{-2} + 168 + \mathcal{O}(q)$ \\
		$35$ & $1^{2}5^{-2}7^{-2}35^{2}$ & $4$ & $2^{-24}4^{24}$ & -1 & $q^{-2} + 96 + \mathcal{O}(q)$ \\
		$7$ & $1^{-8}7^{8}$ & $20$ & $2^{6}4^{-6}10^{-6}20^{6}$ & -1 & $q^{-2} + 168 + \mathcal{O}(q)$ \\
		$7$ & $1^{-8}7^{8}$ & $12$ & $2^{12}4^{-12}6^{-12}12^{12}$ & -1 & $q^{-2} + 264 + \mathcal{O}(q)$ \\
		$7$ & $1^{-8}7^{8}$ & $12$ & $2^{12}4^{-12}6^{-12}12^{12}$ & -1 & $q^{-2} + 264 + \mathcal{O}(q)$ \\
		$7$ & $1^{-8}7^{8}$ & $12$ & $2^{12}4^{-12}6^{-12}12^{12}$ & -1 & $q^{-2} + 264 + \mathcal{O}(q)$ \\
		$21$ & $1^{4}3^{-4}7^{-4}21^{4}$ & $4$ & $2^{-24}4^{24}$ & -1 & $q^{-2} + 192 + \mathcal{O}(q)$ \\
		$21$ & $1^{4}3^{-4}7^{-4}21^{4}$ & $4$ & $2^{-24}4^{24}$ & -1 & $q^{-2} + 192 + \mathcal{O}(q)$ \\
		$21$ & $1^{4}3^{-4}7^{-4}21^{4}$ & $4$ & $2^{-24}4^{24}$ & -1 & $q^{-2} + 192 + \mathcal{O}(q)$ \\
		$3$ & $1^{-24}3^{24}$ & $28$ & $2^{4}4^{-4}14^{-4}28^{4}$ & -1 & $q^{-2} + 264 + \mathcal{O}(q)$ \\
		$3$ & $1^{-24}3^{24}$ & $28$ & $2^{4}4^{-4}14^{-4}28^{4}$ & -1 & $q^{-2} + 264 + \mathcal{O}(q)$ \\
		$3$ & $1^{-24}3^{24}$ & $28$ & $2^{4}4^{-4}14^{-4}28^{4}$ & -1 & $q^{-2} + 264 + \mathcal{O}(q)$ \\
		$3$ & $1^{-24}3^{24}$ & $26$ & $2^{-2}26^{2}$ & -1 & $q^{-2} + 84 + \mathcal{O}(q)$ \\
		$7$ & $1^{-8}7^{8}$ & $10$ & $2^{-6}10^{6}$ & -1 & $q^{-2} + 84 + \mathcal{O}(q)$ \\
		$13$ & $1^{-4}13^{4}$ & $4$ & $2^{24}4^{-24}$ & -1 & $q^{-2} + 96 + \mathcal{O}(q)$ \\
		$7$ & $1^{-8}7^{8}$ & $6$ & $2^{-12}6^{12}$ & -1 & $q^{-2} + 132 + \mathcal{O}(q)$ \\
		$3$ & $1^{-24}3^{24}$ & $14$ & $2^{-4}14^{4}$ & -1 & $q^{-2} + 132 + \mathcal{O}(q)$ \\
		$7$ & $1^{-8}7^{8}$ & $4$ & $2^{-24}4^{24}$ & -1 & $q^{-2} + 192 + \mathcal{O}(q)$ \\
		$7$ & $1^{-8}7^{8}$ & $4$ & $2^{-24}4^{24}$ & -1 & $q^{-2} + 192 + \mathcal{O}(q)$ \\
		$7$ & $1^{-8}7^{8}$ & $4$ & $2^{-24}4^{24}$ & -1 & $q^{-2} + 192 + \mathcal{O}(q)$ \\
		$7$ & $1^{-8}7^{8}$ & $4$ & $2^{-24}4^{24}$ & -1 & $q^{-2} + 192 + \mathcal{O}(q)$ \\
		$5$ & $1^{-12}5^{12}$ & $4$ & $2^{-24}4^{24}$ & -1 & $q^{-2} + 288 + \mathcal{O}(q)$ \\
		$5$ & $1^{-12}5^{12}$ & $4$ & $2^{-24}4^{24}$ & -1 & $q^{-2} + 288 + \mathcal{O}(q)$ \\
		$3$ & $1^{-24}3^{24}$ & $4$ & $2^{-24}4^{24}$ & -1 & $q^{-2} + 576 + \mathcal{O}(q)$ \\
		\bottomrule
	\end{longtable}
\end{center}

\subsection{$\Z_q \rtimes \Z_p$-Orbifolds for $d=72$}

\begin{center}
	\begin{longtable}{llllll}
		\toprule
$q$ & $\Z_q$     & $p$ & $\Z_p$ & $\phi$ & $\text{ch}_{V^{\text{orb}(\Z_q \rtimes_\phi\Z_p)}}(q)$ \\ \midrule \endhead
		$7$ & $1^{-12}7^{12}$ & $78$ & $3^{2}6^{-2}39^{-2}78^{2}$ & 2 & $q^{-3} + 12q^{-1} + 200 + \mathcal{O}(q)$ \\
		$7$ & $1^{-12}7^{12}$ & $30$ & $3^{-4}6^{4}15^{-4}30^{4}$ & 2 & $q^{-3} + 32q^{-1} + 304 + \mathcal{O}(q)$ \\
		$7$ & $1^{-12}7^{12}$ & $30$ & $3^{6}6^{-6}15^{-6}30^{6}$ & 2 & $q^{-3} + 36q^{-1} + 568 + \mathcal{O}(q)$ \\
		$7$ & $1^{-12}7^{12}$ & $39$ & $3^{-2}39^{2}$ & 2 & $q^{-3} + 12q^{-1} + 328 + \mathcal{O}(q)$ \\
		$7$ & $1^{-12}7^{12}$ & $15$ & $3^{4}15^{4}$ & 2 & $q^{-3} + 32q^{-1} + 848 + \mathcal{O}(q)$ \\
		$7$ & $1^{-12}7^{12}$ & $15$ & $3^{-6}15^{6}$ & 2 & $q^{-3} + 36q^{-1} + 824 + \mathcal{O}(q)$ \\
		$3$ & $3^{24}$ & $26$ & $1^{-2}2^{-2}13^{2}26^{2}$ & -1 & $q^{-3} + 48q^{-1} + 1344 + \mathcal{O}(q)$ \\
		$3$ & $3^{24}$ & $26$ & $1^{2}2^{-4}13^{-2}26^{4}$ & -1 & $q^{-3} + 48q^{-1} + 1272 + \mathcal{O}(q)$ \\
		$7$ & $1^{-12}7^{12}$ & $6$ & $3^{-24}6^{24}$ & 2 & $q^{-3} + 144q^{-1} + 1040 + \mathcal{O}(q)$ \\
		$7$ & $1^{-12}7^{12}$ & $6$ & $3^{-24}6^{24}$ & 2 & $q^{-3} + 144q^{-1} + 1040 + \mathcal{O}(q)$ \\
		$3$ & $3^{24}$ & $10$ & $1^{4}2^{4}5^{4}10^{4}$ & -1 & $q^{-3} + 8q^{-2
		} + 152q^{-1} + 4320 + \mathcal{O}(q)$ \\
		$3$ & $3^{24}$ & $10$ & $1^{6}2^{-12}5^{-6}10^{12}$ & -1 & $q^{-3} + 144q^{-1} + 3336 + \mathcal{O}(q)$ \\
		$3$ & $3^{24}$ & $10$ & $1^{-6}2^{-6}5^{6}10^{6}$ & -1 & $q^{-3} + 144q^{-1} + 3456 + \mathcal{O}(q)$ \\
		$3$ & $3^{24}$ & $10$ & $1^{-4}2^{8}5^{-4}10^{8}$ & -1 & $q^{-3} + 136q^{-1} + 3360 + \mathcal{O}(q)$ \\
		$7$ & $1^{-12}7^{12}$ & $3$ & $3^{24}$ & 2 & $q^{-3} + 144q^{-1} + 3376 + \mathcal{O}(q)$ \\
		$3$ & $3^{24}$ & $2$ & $1^{-24}2^{48}$ & -1 & $q^{-3} + 600q^{-1} + 14496 + \mathcal{O}(q)$ \\
		$3$ & $3^{24}$ & $2$ & $1^{24}2^{24}$ & -1 & $q^{-3} + 24q^{-2} + 648q^{-1} + 17376 + \mathcal{O}(q)$ \\ \bottomrule
	\end{longtable}
\end{center}

\subsection{Bounds for Abelian orbifolds}

At $d=24$ it turns out that all $71$ holomorphic vertex operator algebras can be constructed as cyclic orbifolds of lattice VOAs. Now it is a reasonable question to ask whether that might also be the case at higher central charge.
Consider a lattice vertex operator algebra at central charge $d$ and let $\mathfrak{h}$ be its heisenberg sub-VOA.
\begin{lem}
For any finite, abelian group $G$ acting orthogonally on the heisenberg VOA $\mathfrak{h}$ the dimensions of the $G$-invariant homogeneous subspaces are bounded from below by
\begin{equation}
	\mathrm{dim}(\mathfrak{h}_{(2)}) \geq \frac{d}{2}
\end{equation}
and
\begin{equation}
	\mathrm{dim}(\mathfrak{h}_{(3)}) \geq d.
\end{equation}
In particular, the bounds are sharp.
\end{lem}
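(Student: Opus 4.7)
The plan is to decompose the weight-$2$ and weight-$3$ pieces of $\mathfrak{h}$ into irreducible $G$-summands, and then lower-bound the invariants by elementary character counting. Set $V := \mathfrak{h}\otimes_\R \C$ with isotypic decomposition $V = \bigoplus_\chi V_\chi$, multiplicities $m_\chi := \dim V_\chi$, and $\sum_\chi m_\chi = d$. Orthogonality of the $G$-action gives a non-degenerate $G$-invariant symmetric bilinear form, hence $V \cong V^*$ as $G$-modules, i.e.\ $m_\chi = m_{\chi^{-1}}$ for every character $\chi$. Since the creation operators $h_{-n}$ mutually commute on the vacuum, the obvious mode bases yield
\begin{equation}
\mathfrak{h}_{(2)} \cong V \oplus \mathrm{Sym}^2 V, \qquad \mathfrak{h}_{(3)} \cong V \oplus (V\otimes V) \oplus \mathrm{Sym}^3 V
\end{equation}
as $G$-modules, with the summands labelled by the mode tuples $(h_{-2})$, $(h_{-1}k_{-1})$ and $(h_{-3})$, $(h_{-2}k_{-1})$, $(h_{-1}k_{-1}l_{-1})$ respectively.

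Standard character theory then gives $\dim V^G = m_1$, $\dim(V\otimes V)^G = \sum_\chi m_\chi m_{\chi^{-1}} = \sum_\chi m_\chi^2$, and, with $S := \{\chi : \chi^2 = 1\}$,
\begin{equation}
\dim(\mathrm{Sym}^2 V)^G = \sum_{\chi\in S}\binom{m_\chi+1}{2} + \sum_{\{\chi,\chi^{-1}\}:\,\chi\notin S}m_\chi^2.
\end{equation}
Subtracting $d/2 = \tfrac12\sum_\chi m_\chi$ term by term and using $\binom{m+1}{2} - m/2 = m^2/2 \geq 0$ together with $m^2 - m \geq 0$ on each summand, one finds
\begin{equation}
\dim \mathfrak{h}_{(2)}^G - \tfrac{d}{2} \;=\; m_1 + \tfrac12\!\sum_{\chi\in S} m_\chi^2 \;+\; \sum_{\{\chi,\chi^{-1}\}:\,\chi\notin S}\! m_\chi(m_\chi-1) \;\geq\; 0 .
\end{equation}
For the weight-$3$ bound it is enough to keep only the first two summands: the inequality $m_\chi^2 \geq m_\chi$ for nonnegative integers gives $\sum_\chi m_\chi^2 \geq m_1^2 + (d-m_1)$, so
\begin{equation}
\dim \mathfrak{h}_{(3)}^G \;\geq\; m_1 + \sum_\chi m_\chi^2 \;\geq\; d + m_1^2 \;\geq\; d .
\end{equation}

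Sharpness is exhibited by small explicit examples. For $d=2$ and $G = \Z_5$ acting on $\R^2$ by an order-$5$ rotation, $m_1 = 0$, $m_\omega = m_{\omega^{-1}} = 1$, and $\pm 1 \pm 1 \pm 1 \not\equiv 0 \pmod 5$ forces $\dim(\mathrm{Sym}^3 V)^G = 0$, so both bounds are saturated: $\dim\mathfrak{h}_{(2)}^G = 1 = d/2$ and $\dim\mathfrak{h}_{(3)}^G = 2 = d$. For general even $d$, taking $G = \Z_n$ with $n$ prime and sufficiently large, acting as a sum of $d/2$ commuting rotations by angles $2\pi a_i/n$ with the $\{a_i\}$ chosen as a $B_3^\pm$ Sidon-type set modulo $n$, saturates both bounds simultaneously. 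The only mildly delicate point is arranging the frequencies so that the $\mathrm{Sym}^3 V$ summand contributes nothing in the example; this is a finite modular-arithmetic check, and the rest of the argument is bookkeeping of $G$-invariants in tensor and symmetric powers of $V$.
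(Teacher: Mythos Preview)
Your argument is correct and follows the same overall strategy as the paper: diagonalize the abelian $G$-action on $V$, identify $\mathfrak{h}_{(2)}$ and $\mathfrak{h}_{(3)}$ with $V\oplus\mathrm{Sym}^2V$ and $V\oplus(V\otimes V)\oplus\mathrm{Sym}^3V$, and count invariants. The paper does this more informally, simply exhibiting the explicit invariant states $(v_i)_{-1}(\bar v_i)_{-1}\mathfrak e_0$ and $(v_i)_{-2}(\bar v_i)_{-1}\mathfrak e_0$ rather than writing down your exact multiplicity formulas; your character-theoretic bookkeeping makes the inequality transparent and in fact gives the precise excess $\dim\mathfrak h_{(2)}^G - d/2$.

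Where the two treatments genuinely diverge is in the sharpness examples. The paper avoids any Sidon-type construction: for weight~2 it takes a single element $g$ with no real eigenvalues and squarefree characteristic polynomial; for weight~3 it takes $g$ of even order $n$ with \emph{all} eigenvalues primitive $n$-th roots of unity, so that every exponent is odd and hence no sum of three exponents can vanish modulo the even integer $n$. This parity trick kills $(\mathrm{Sym}^3V)^G$ instantly and works for every even $d$ without any arithmetic search. Combined with the squarefree condition (so all $m_\chi\le 1$ and $m_1=0$), it saturates both bounds simultaneously. You might consider replacing your $B_3^\pm$-Sidon argument with this, as it removes the ``mildly delicate'' step entirely.
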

\begin{proof}
Let $\{v_i\}$ denote the set of simultaneous eigenvectors of the elements of $G$.

$\mathfrak{h}_{(2)}$ is spanned by states of the form $(v_i)_{-2}\mathfrak{e}_0$ and $(v_i)_{-1}(v_j)_{-1}\mathfrak{e}_0$. Clearly, none of the former states will be invariant if $G$ contains a fixed-point-free element. Among the latter, elements of the form $(v_i)_{-1}(\overline{v_i})_{-1}\mathfrak{e}_0$ are clearly $G$-invariant yielding at least $\frac{d}{2}$ invariant elements. If $G$ contains an element $g$ with no real eigenvalues and whose characteristic polynomial is squarefree then there are exactly $\frac{d}{2}$ states of this form and they contain all $g$-invariant states at level $2$. Hence the bound is sharp.

$\mathfrak{h}_3$ is spanned by states of the form $(v_i)_{-3}\mathfrak{e}_0$, $(v_i)_{-2}(v_j)_{-1}\mathfrak{e}_0$ and $(v_i)_{-1}(v_j)_{-1}(v_k)_{-1}\mathfrak{e}_0$. By a similar reasoning to above the first two types of states contribute at least a total of $d$ states. A state of the last type is invariant under an element $g$ if the respective eigenvalues satisfy $\lambda_i\lambda_j\lambda_k = 1$. Now let $g$ have even order $n$ and let all its eigenvalues be primitive $n$-th roots of unity. Then the product of any $3$ eigenvalues is an odd power of a primitive $n$-th root of unity and there no invariant states. Hence the bound $\mathrm{dim}(\mathfrak{h}_{(3)}) \geq d$ is sharp.
\end{proof}

Hence we find that for any finite abelian automorphism group of lifted lattice automorphisms the number of low-weight states is bounded from below. In particular, this provides us with an immediate answer to our question:

\begin{cor}
	The vertex operator algebra at central charge $d=72$ with character $\chi_{V}(\tau) = q^{-3} + 12q^{-1} + 200 + \mathcal{O}(q)$ cannot be constructed as a cyclic orbifold of a lattice vertex operator algebra by a lifted lattice automorphism.
\end{cor}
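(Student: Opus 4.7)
The plan is to derive a direct contradiction by comparing the low-weight dimensions of the hypothetical construction with the bound supplied by the preceding lemma. Concretely, suppose towards a contradiction that the VOA $V$ with character $q^{-3}+12q^{-1}+200+\mathcal{O}(q)$ arises as $V = V_L^{\text{orb}(G)}$ for some lattice $L$ of rank $d=72$ and some cyclic group $G<\Aut(V_L)$ lifted from a lattice automorphism. Cyclic groups are abelian, so the hypothesis of the lemma applies.

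First I would argue that the Heisenberg invariants embed as a sub-VOA of the orbifold. Since $G$ is lifted from a lattice automorphism, the lift $\gl$ acts orthogonally on the rank-$d$ Cartan $\mathfrak{h}_{(1)} = L\otimes_\Z\C$ and consequently preserves the Heisenberg sub-VOA $\mathfrak{h}\subset V_L$. Hence $\mathfrak{h}^G$ is a sub-VOA of $V_L^G$. But by construction $V_L^G\subset V_L^{\text{orb}(G)}$, so $\mathfrak{h}^G\subset V$ as graded vector spaces. In particular
\begin{equation}
\dim V_{(n)} \;\geq\; \dim \mathfrak{h}^G_{(n)} \qquad \text{for every } n.
\end{equation}

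Next I would invoke the lemma with $d=72$ at level $n=2$ to obtain $\dim \mathfrak{h}^G_{(2)} \geq d/2 = 36$. Combined with the embedding above this gives $\dim V_{(2)}\geq 36$, which contradicts the prescribed value $\dim V_{(2)} = 12$ read off from the character. This contradiction establishes the corollary. (One could equally well use the level-$3$ bound $\dim \mathfrak{h}^G_{(3)}\geq 72$ against $\dim V_{(3)}=200$, but since $200>72$ the level-$2$ bound is the binding one, and it is that obstruction that forces the non-Abelian construction.)

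The only subtlety I see is confirming that the inclusion $\mathfrak{h}^G\subset V_L^{\text{orb}(G)}$ is truly as a graded subspace rather than merely as a sub-VOA up to some twist: since $V_L^{\text{orb}(G)}$ is by definition $V_L^G \oplus \bigoplus_k W_{[k,\beta_k^\psi]}$ with the first summand carrying its tautological grading, this is immediate. No other step involves computation, so I would expect the proof to be short and the only real content to be the dimension count at level $2$.
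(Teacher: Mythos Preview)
Your argument is correct and is exactly the one the paper intends: the corollary is stated immediately after the lemma with no explicit proof, precisely because the inclusion $\mathfrak{h}^G\subset V_L^G\subset V^{\text{orb}(G)}$ together with the bound $\dim\mathfrak{h}^G_{(2)}\ge d/2=36>12$ is the entire content. Your identification that only the level-$2$ bound bites (since $200\ge 72$) is also the right observation.
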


\appendix

\section{Smith normal form and lattice quotients}\label{app:Davydov}
\begin{defn}[Smith normal form]
	Let $L$ be an even, unimodular lattice of rank $n$ and $A$ an $n \times n$-matrix with entries in the integers (or more generally, a principal ideal domain). Then there exist integral, (over the integers) invertible matrices $P$ and $Q$ such that the product
	\begin{equation}\label{eq:snf}
	S = P~A~Q
	\end{equation}
	is a diagonal matrix of the form
	\begin{equation}
	S = \mathrm{diag}(s_1,s_2,...,s_k,0,\ldots,0),
	\end{equation}
	such that $s_i \in \Z^+$ and $s_i | s_{i+1}$ for all $i$ and $\prod_i s_i = \pm \mathrm{det}(A)$.
	The diagonal entries $s_i$ are unique up to sign and are called the \textit{elementary divisors} of $A$.
	$S$ is called the \textit{Smith normal form} of $A$. To put it another way, there exists a basis Let $\{\tilde \alpha_1, \ldots, \tilde \alpha_k, \gamma_1, \ldots, \gamma_{n-k}\}$ and a basis $\{\alpha_1, \ldots, \alpha_k, \delta_1, \ldots, \delta_{n-k}\}$ of $L$ (given by the row vectors of $P$ and $Q^{-1}$ respectively) such that $A$ acts as
		\begin{equation}
	\tilde \alpha_i A = s_i \alpha_i, \quad i = 1, \ldots, k
	\end{equation}
	and
	\begin{equation}
	\gamma_j A = 0, \quad j = 1, \ldots, n-k.
	\end{equation}
\end{defn}

Since the lattice $LA$ is spanned by the basis $\{s_i \alpha_i \}$, it follows immediately:
\begin{cor}\label{thm:lat_quo}
	Let $L$ be a lattice of rank $n$ and let $A$ act on elements of $L$ in the coordinate basis.
	Then the group structure of the quotient $L/LA$ is given by
	\begin{equation}
	L/LA \cong \bigoplus_i^n \Z/s_i\Z,
	\end{equation}
	where $s_i = 0$ for $i > k$.
\end{cor}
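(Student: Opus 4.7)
The plan is to extract the claim as an immediate consequence of the Smith normal form data recorded in the preceding definition. The key observation is that the rows of $P$ and of $Q^{-1}$ lie in $GL_n(\Z)$, so they furnish genuine $\Z$-bases of $L$, not merely $\Q$-bases of $L\otimes\Q$. Consequently the two compatible bases produced by Smith normal form reduce the computation of $L/LA$ to a transparent direct-sum calculation of cyclic quotients.

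Concretely, I would work in the basis $\{\alpha_1,\ldots,\alpha_k,\delta_1,\ldots,\delta_{n-k}\}$ of $L$ given by the rows of $Q^{-1}$, so that
\[
L \;=\; \bigoplus_{i=1}^k \Z\alpha_i \;\oplus\; \bigoplus_{j=1}^{n-k} \Z\delta_j.
\]
Expanding an arbitrary $x\in L$ in the paired basis $\{\tilde\alpha_i,\gamma_j\}$ (the rows of $P$) as $x = \sum_i n_i\tilde\alpha_i + \sum_j m_j \gamma_j$ and applying $A$ via the relations $\tilde\alpha_i A = s_i\alpha_i$ and $\gamma_j A = 0$ recorded at the end of the definition, one obtains $xA = \sum_{i=1}^k n_i s_i \alpha_i$. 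Hence the image lattice admits the clean description
\[
LA \;=\; \bigoplus_{i=1}^k s_i\Z\alpha_i \;\subset\; L,
\]
a sublattice supported entirely on the $\alpha$-summands of the ambient decomposition.

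At this point the quotient decomposes termwise: each $\Z\alpha_i$ contributes a cyclic summand $\Z\alpha_i/s_i\Z\alpha_i \cong \Z/s_i\Z$ for $i\le k$, while each $\Z\delta_j$ is untouched by $A$ and contributes a free summand $\Z$. Adopting the convention $s_i=0$ for $i>k$ so that $\Z/s_i\Z = \Z$, the two pieces combine uniformly into
\[
L/LA \;\cong\; \bigoplus_{i=1}^k \Z/s_i\Z \;\oplus\; \bigoplus_{j=1}^{n-k}\Z \;=\; \bigoplus_{i=1}^n \Z/s_i\Z,
\]
as claimed.

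There is no substantive obstacle here: the only point that genuinely needs verification is that the changes of basis induced by $P$ and $Q^{-1}$ respect the integer structure of $L$, and this is immediate from $P,Q\in GL_n(\Z)$. The remainder is the obvious direct-sum decomposition of a quotient of free abelian groups, so the corollary is in effect a special-case repackaging of the structure theorem for finitely generated abelian groups tailored to the setting of a square integer matrix acting on a lattice.
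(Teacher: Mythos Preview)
Your proof is correct and follows exactly the paper's approach: the paper simply remarks that $LA$ is spanned by $\{s_i\alpha_i\}$ and declares the corollary immediate, while you spell out the same computation in detail. There is nothing to add.
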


\begin{lem}{\label{lem:smith}}
	Let $g$ be an automorphism of $L$ of order $n$, $A = 1-g$ and the matrices $P$, $Q$ and $S$ as in Theorem \ref{thm:lat_quo}.
	Then the fixed-point lattice $L_g$ is given by
	\begin{equation}
	L_g = \mathrm{span}_{\Z}(\gamma_1, \ldots, \gamma_{n-k})
	\end{equation}
	and $L^{\perp}_g$, its orthogonal complement in $L$, by
	\begin{equation}
	L^{\perp}_g = \mathrm{span}_{\Z}(\alpha_1, \ldots, \alpha_k).
	\end{equation}
\end{lem}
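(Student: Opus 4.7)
The plan is to prove the two identifications in sequence, starting with the easy one (the fixed-point lattice) and then handling the orthogonal complement, which is a bit subtler.

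First I would unpack the content of the Smith normal form: $A = 1 - g$ acts as $\tilde\alpha_i A = s_i\alpha_i$ and $\gamma_j A = 0$, where $\{\tilde\alpha_1,\ldots,\tilde\alpha_k,\gamma_1,\ldots,\gamma_{n-k}\}$ and $\{\alpha_1,\ldots,\alpha_k,\delta_1,\ldots,\delta_{n-k}\}$ are $\Z$-bases of $L$. The containment $\mathrm{span}_{\Z}(\gamma_j) \subseteq L_g$ is immediate from $\gamma_j A = 0$. For the reverse inclusion, any $v \in L_g$ has $vA = 0$; writing $v = \sum a_i\tilde\alpha_i + \sum b_j\gamma_j$ gives $\sum a_i s_i \alpha_i = 0$, and $\Z$-linear independence of the $\alpha_i$ (as part of a basis of $L$) together with $s_i > 0$ forces $a_i = 0$. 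This settles the first identification.

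For $L_g^{\perp}$, I would first check $\alpha_i \in L_g^{\perp}$. Since $\alpha_i = s_i^{-1}\tilde\alpha_i(1-g)$, it lies in the $\Q$-span of $\mathrm{Im}(1-g)$. Because $g$ is an orthogonal transformation, $\ker(1-g) = \mathfrak{h}_g$ and $\mathrm{Im}(1-g)$ are orthogonal (eigenspaces of the orthogonal operator $g$ for distinct eigenvalues are orthogonal, and $(1-g)$ vanishes only on the $1$-eigenspace). Hence $\alpha_i \in \mathfrak{h}_g^{\perp} \cap L = L_g^{\perp}$.

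The content of the reverse inclusion is the step I expect to be the main obstacle, since the $\delta_j$ are not a priori orthogonal to anything. Given $\beta \in L_g^{\perp}$, write $\beta = \sum c_i\alpha_i + \sum d_j\delta_j$. Using $L_g = \mathrm{span}_{\Z}(\gamma_l)$ from the first part, the condition $\beta \perp L_g$ becomes $\langle\beta,\gamma_l\rangle = 0$ for all $l$. The $\alpha_i$ contributions vanish by the previous paragraph, so I am left with the system $\sum_j d_j\langle\delta_j,\gamma_l\rangle = 0$. To close the argument I would show that the matrix $M_{jl} = \langle\delta_j,\gamma_l\rangle$ is non-singular: decompose $\delta_j = \delta_j^{\parallel} + \delta_j^{\perp}$ along $\mathfrak{h}_g \oplus \mathfrak{h}_g^{\perp}$, note $\langle\delta_j,\gamma_l\rangle = \langle\delta_j^{\parallel},\gamma_l\rangle$, and verify that $\{\delta_j^{\parallel}\}$ is $\Q$-linearly independent (any non-trivial relation $\sum c_j\delta_j^{\parallel}=0$ would force $\sum c_j\delta_j \in \mathfrak{h}_g^{\perp} = \mathrm{span}_{\Q}(\alpha_i)$, contradicting the basis property of $\{\alpha_i,\delta_j\}$). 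Hence $\{\delta_j^{\parallel}\}$ is a $\Q$-basis of $\mathfrak{h}_g$, the matrix $M$ is the Gram matrix pairing two bases of $\mathfrak{h}_g$ under the non-degenerate inner product, and is invertible. Therefore $d_j = 0$ for all $j$ and $\beta \in \mathrm{span}_{\Z}(\alpha_1,\ldots,\alpha_k)$, completing the proof.
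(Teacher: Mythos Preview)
Your proof is correct. The first identification and the inclusion $\mathrm{span}_{\Z}(\alpha_i)\subseteq L_g^\perp$ match the paper's argument (you simply make explicit what the paper calls ``clearly'').

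For the reverse inclusion $L_g^\perp\subseteq\mathrm{span}_{\Z}(\alpha_i)$, you take a genuinely different route. The paper argues via rank and torsion: since $\mathrm{Rank}(L_g^\perp)=\mathrm{Rank}(L(1-g))=k$, the quotient $L_g^\perp/L(1-g)$ is finite, so any $w\in L_g^\perp$ has an integer multiple $n_w w$ lying in $L(1-g)=\mathrm{span}_\Z(s_i\alpha_i)$; if $w$ had a nonzero $\delta_j$-component, this would contradict the $\Z$-linear independence of $\{\alpha_i,\delta_j\}$. Your approach instead uses the orthogonality condition $\langle\beta,\gamma_l\rangle=0$ directly and reduces to the invertibility of the Gram-type matrix $M_{jl}=\langle\delta_j,\gamma_l\rangle$, which you establish by showing the projections $\delta_j^\parallel$ form a $\Q$-basis of $\mathfrak{h}_g$. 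The paper's argument is a bit shorter and more structural (it never needs to look at the inner product explicitly), while yours is more self-contained in that it works entirely with the pairing and does not invoke the torsion observation. Both are perfectly valid.
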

\begin{proof}
	Clearly, $\mathrm{span}_{\Z}(\gamma_1, \ldots, \gamma_{n-k}) \subseteq L^g$. So to prove the first statement, we need to show that there is no vector $v \in \mathrm{span}_{\Z}(\tilde \alpha_1, \ldots,\tilde \alpha_k)$
	such that $v(1-g) = 0$. This follows directly from the linear independence of the $\{\alpha_i\}$.

	Clearly, $\mathrm{span}_{\Z}(\alpha_1, \ldots, \alpha_k) \subseteq L^{\perp}$. So to prove the second statement, we need to show that there is no vector $w \in \mathrm{span}_{\Z}(\delta_1, \ldots,\delta_{n-k})$ such that $w \in L^{\perp}$. But because $\mathrm{Rank}(L^{\perp}) = \mathrm{Rank}(L(1-g))$ (Theorem \ref{thm:lat_quo} implies that) for any such vector there has to exist an integer $n_w$ such that $n_w w \in L(1-g)$. This contradicts the linear independence of the elements of $B_Q$. Hence the second statement follows.
\end{proof}

\begin{cor}
	$L^{\perp}_g/L(1-g)$ is generated by the elements $\{[\alpha_1], \ldots, [\alpha_k]\}$ and the element $[\alpha_i]$ has order $s_i$.
 In particular, $L_g^{\perp}/L(1-g)$ is the torsion subgroup of $L/L(1-g)$.
\end{cor}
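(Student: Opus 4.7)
The plan is to read off both claims directly from the Smith normal form basis constructed for $A = 1-g$. Lemma~\ref{lem:smith} already identifies $L_g^\perp = \mathrm{span}_\Z(\alpha_1,\ldots,\alpha_k)$, while the defining property $\tilde\alpha_i(1-g) = s_i\alpha_i$ and $\gamma_j(1-g) = 0$ shows that the image lattice is $L(1-g) = \mathrm{span}_\Z(s_1\alpha_1,\ldots,s_k\alpha_k)$. Since the $\alpha_i$ are $\Z$-linearly independent, taking the quotient gives
\begin{equation}
L_g^\perp/L(1-g) \;\cong\; \bigoplus_{i=1}^k \Z\alpha_i \,\Big/\, \bigoplus_{i=1}^k s_i\Z\alpha_i \;\cong\; \bigoplus_{i=1}^k \Z/s_i\Z,
\end{equation}
which is precisely the first claim: the generator $[\alpha_i]$ has order $s_i$.

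For the second claim, I would use the full basis $\{\alpha_1,\ldots,\alpha_k,\delta_1,\ldots,\delta_{n-k}\}$ of $L$ supplied by the Smith normal form and quotient by the relations $s_i[\alpha_i]=0$ to recover Corollary~\ref{thm:lat_quo} in the form
\begin{equation}
L/L(1-g) \;\cong\; \bigoplus_{i=1}^k \Z/s_i\Z \;\oplus\; \bigoplus_{j=1}^{n-k} \Z\,[\delta_j].
\end{equation}
From this decomposition it is manifest that the torsion subgroup is exactly $\bigoplus_i \Z/s_i\Z$, generated by the $[\alpha_i]$. Since these are the same generators that appeared for $L_g^\perp/L(1-g)$ above, the two subgroups coincide inside $L/L(1-g)$.

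There is no serious obstacle here: once Lemma~\ref{lem:smith} is in hand, both statements reduce to bookkeeping in the Smith basis. The only point that wants a moment's thought is the verification that no nonzero integer combination of the $[\delta_j]$ is torsion in $L/L(1-g)$, i.e.\ that $n\sum_j c_j\delta_j \in L(1-g)$ with $n>0$ forces all $c_j=0$. This is immediate from the $\Z$-linear independence of the basis $\{\alpha_i,\delta_j\}$ of $L$ together with the inclusion $L(1-g) \subseteq \mathrm{span}_\Z(\alpha_1,\ldots,\alpha_k)$, so the check is painless and the corollary follows.
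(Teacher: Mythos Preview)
Your argument is correct and is exactly the kind of bookkeeping the paper intends: the corollary is stated without proof, as it is meant to follow immediately from Lemma~\ref{lem:smith} together with Corollary~\ref{thm:lat_quo}, and you have simply spelled out that deduction. There is nothing to add.
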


\section{Lagrangian decomposition and Darboux basis}
Let $A$ be a finite abelian group and $\beta:A \times A \to \C$ an alternative bi-multiplicative non-degenerate form.

\begin{lem}\label{lem:ndeg}
	Let $\{a_1, \ldots, a_k\}$ be a basis of $A$ such that $a_1$ has order $n$. Then there is another basis element $b_1:=a_i$, $i \neq 1$ such that $\beta(a_1,b_1)$ is a primitive $n$-th root of unity.
\end{lem}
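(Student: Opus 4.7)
The plan is to analyze the character $\chi := \beta(a_1,\cdot) \in \check A$ and argue that its image in $\C^*$ is forced to be all of $\mu_n$.

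First, I would observe that $\chi$ takes values in $\mu_n$: since $a_1$ has order $n$, bi-multiplicativity gives $\chi^n = \beta(na_1,\cdot) = 1$. The alternating property yields $\chi(a_1) = \beta(a_1,a_1) = 1$, so $\chi$ is entirely determined by its values $\beta(a_1,a_i)$ for $i \neq 1$. Next, non-degeneracy of $\beta$ says that $a \mapsto \beta(a,\cdot)$ is an isomorphism $A \to \check A$, which pins down the order of $\chi$ in $\check A$ to be exactly $n$. The image of a character of order $n$ in $\C^*$ is a cyclic group of order $n$; since this image lies inside $\mu_n$, it must equal all of $\mu_n$. Consequently, the set $\{\beta(a_1,a_i) : i \neq 1\}$ generates $\mu_n$.

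The final step is to extract a single value which is itself primitive. In a cyclic group of prime-power order $p^e$, the subgroup lattice is totally ordered by inclusion, so any generating set of $\mu_{p^e}$ must contain an element that generates the whole group. Applied to $\mu_n$ with $n = p^e$, this produces an index $i_0 \neq 1$ such that $\beta(a_1,a_{i_0})$ is a primitive $n$-th root of unity, and I would set $b_1 := a_{i_0}$.

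The main obstacle is that the totally-ordered subgroup argument requires $n$ to be a prime power. For general composite $n$, one must first use that $\beta$ pairs the Sylow $p$-primary components of $A$ trivially against Sylow $q$-components for distinct primes $q$, which reduces the statement to the $p$-primary case on each summand. This is the step that implicitly requires working with a basis whose elements have prime-power order; once this reduction is in place, the cyclic argument above applies separately to each prime.
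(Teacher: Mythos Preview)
Your core argument is the same as the paper's: both hinge on the observation that if the values $\beta(a_1,a_j)$ all lie in some proper $\mu_m\subset\mu_n$, then $ma_1$ lies in the radical of $\beta$, contradicting non-degeneracy. The paper's entire proof is that one sentence.

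Where you go further is exactly the right place. The paper silently assumes that the negation of ``some $\beta(a_1,a_j)$ is a primitive $n$-th root'' is ``all $\beta(a_1,a_j)$ lie in a common $\mu_m$ with $m\mid n$, $m<n$''. As you note, this inference is valid only when the subgroup lattice of $\mu_n$ is a chain, i.e.\ when $n$ is a prime power. For composite $n$ the lemma as stated is actually false: take $A=\Z/6\times\Z/6$ with the standard symplectic form $\beta((x_1,y_1),(x_2,y_2))=\zeta_6^{x_1y_2-x_2y_1}$ and the basis $a_1=(1,0)$, $a_2=(0,3)$, $a_3=(0,2)$; then $\beta(a_1,a_2)=-1$ and $\beta(a_1,a_3)=\zeta_6^2$, neither of which is primitive of order~$6$, yet $\beta$ is non-degenerate.

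Your proposed repair---split into Sylow components and use that $\beta$ is trivial between distinct $p$-parts---is the correct one, and your conclusion that the statement really requires a basis of prime-power order elements is accurate. In the paper the lemma is only invoked in the proof of the Darboux-basis theorem, where one is free to choose the initial basis; using the elementary-divisor form rather than the invariant-factor form there makes your argument go through cleanly. So your proof is not merely equivalent to the paper's, it patches a genuine gap.
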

\begin{proof}
	Assume $\beta(\alpha_1,\alpha_j)$ is at most an $m$-th root of unity, for some proper divisor $m\mid n$ and for all $j$.
	Then $\beta(m\alpha_1,\alpha_j)=1$ for all $j$ and hence $\beta$ is degenerate. A contradiction.
\end{proof}
\begin{note}
	This also implies that $ord(b_1) = ord(a_1)$.
\end{note}

\begin{thm}\label{thm:darboux1}
	$A$ admits a basis $\{a_1, \ldots, a_k, b_1, \ldots, b_k\}$ such that
	\begin{equation}
	o(a_i) = o(b_i) = n_i,
	\end{equation}
	where $n_i|n_{i+1}$ and such that
	\begin{equation}
	\beta(a_i,b_j) = \delta_{i,j} \xi_{n_i}^{l_i},
	\end{equation}
	where $\mathrm{gcd}(n_i,l_i) = 1$ and
	\begin{equation}
	\beta(a_i,a_j) = \beta(b_i,b_j) = 1.
	\end{equation}
\end{thm}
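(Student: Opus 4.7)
The plan is to induct on $|A|$, peeling off one symplectic pair at each step corresponding to the largest invariant factor of what remains. For the inductive step, I pick $a \in A$ of maximal order $N$, so that $N$ is the exponent of $A$. Since $a$ has maximal order in a finite abelian group, $\langle a \rangle$ is a direct summand, and I can extend $\{a\}$ to a basis of $A$. Applying Lemma \ref{lem:ndeg} to this basis yields another basis element $b$ with $\beta(a,b)$ a primitive $N$-th root of unity; the note after the lemma, together with the maximality of $N$, forces $o(b) = N$ as well.

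Next I set $H := \langle a,b \rangle$ and observe $H \cong \Z/N \oplus \Z/N$, with $\beta|_H$ non-degenerate because $\beta(a,b)$ generates the full group of $N$-th roots of unity. The structural heart of the argument is the orthogonal decomposition $A = H \oplus H^{\perp}$, where $H^{\perp} := \{c \in A : \beta(c,a) = \beta(c,b) = 1\}$. Given $c \in A$, I solve for $\lambda, \mu \in \Z/N$ in $\beta(a,b)^{\lambda} = \beta(c,b)$ and $\beta(b,a)^{\mu} = \beta(c,a)$; these have solutions because $\beta(a,b)$ generates the cyclic group of $N$-th roots of unity and both right-hand sides are $N$-th roots of unity (as $a$ and $b$ have order $N$). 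Then $c - (\lambda a + \mu b) \in H^{\perp}$, while $H \cap H^{\perp} = 0$ follows directly from non-degeneracy of $\beta|_H$.

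I then verify that $\beta|_{H^{\perp}}$ remains non-degenerate: any $c \in H^{\perp}$ pairing trivially with all of $H^{\perp}$ also pairs trivially with $H$ by definition, hence trivially with all of $A$, so $c = 0$. The inductive hypothesis applied to $(H^{\perp}, \beta|_{H^{\perp}})$ yields the remaining symplectic pairs. After collecting all pairs and re-indexing so that the orders appear in increasing order, the divisibility $n_i \mid n_{i+1}$ holds automatically because at each induction step I extracted the exponent of what remained, and the exponent of a subgroup divides the exponent of the ambient group.

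The main obstacle is the orthogonal splitting $A = H \oplus H^{\perp}$, since it requires simultaneously that $a$ have order equal to the exponent of $A$ (so the pairings $\beta(c,a), \beta(c,b)$ are $N$-th roots) and that $\beta(a,b)$ be a primitive $N$-th root (so those equations are solvable in $\Z/N$). These two conditions are exactly what the choice of $a$ of maximal order and Lemma \ref{lem:ndeg} secure; once the splitting is in hand, everything else is bookkeeping over the induction.
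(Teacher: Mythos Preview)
Your proof is correct and follows essentially the same strategy as the paper: pick an element of maximal order, use Lemma~\ref{lem:ndeg} to find its symplectic partner, orthogonalize the remainder, and induct. The only cosmetic difference is that you phrase the orthogonalization abstractly via the splitting $A = H \oplus H^{\perp}$, whereas the paper writes down the explicit coordinate change $\tilde a_i = a_i + k_{i,r}a_j - k_{i,j}a_r$; your projection $c \mapsto c - (\lambda a + \mu b)$ is exactly this change of basis. If anything, your version is more carefully finished: you verify the divisibility chain $n_i \mid n_{i+1}$ and the non-degeneracy on $H^{\perp}$, while the paper's proof ends with ``(To do: iron the end.)''.
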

\begin{proof}
	By the fundamental theorem of finitely generated abelian groups there exists a basis $\{a_1, \ldots, a_r\}$ of $A$ such $ord(a_i) = n_i$,$n_r = n_k$ and $n_i\mid n_{i+1}$.
	Then by Lemma \ref{lem:ndeg} there exists an $a_j=:b_r$ such that
	\begin{equation}
	\beta(a_r,b_r) = \chi.
	\end{equation}
	where $\chi$ is a primitive $n$-th root of unity. Hence for any basis element $a_i$, $i \neq r,j$ there are positive integers $k_{i,r}$ and $k_{i,j}$ such that
	\begin{equation}
	\beta(a_i,a_r) = \chi^{k_{i,r}}
	\end{equation}
	and
	\begin{equation}
	\beta(a_i, a_j) = \chi^{k_{i,j}}.
	\end{equation}
	Now set
	\begin{equation}
	\tilde a_i = a_i + k_{i,r} a_j - k_{i,j} a_r.
	\end{equation}
	Note that $k_{i,r} a_j$ and $k_{i,j} a_r$ and hence also $\tilde \alpha_i$ are elements of order $n_i$.
	Then $\{\tilde a_1, \ldots,\tilde a_{r-1}, a_r,b_r\}$ is a basis of $A$. The theorem follows by induction. (To do: iron the end.)
\end{proof}

\begin{lem}
	Let $\FA = \frac{L_g^{\perp}}{L(1-g)}$ and $\beta = C$. Then there is a basis
	$\{\alpha_1, \ldots, \alpha_k, \beta_1, \ldots, \beta_k\}$ of $L^{\perp}$ such that
	$\{[\alpha_1], \ldots, [\alpha_k], [\beta_1], \ldots, [\beta_k]\}$ is a basis of $N$ that
	satisfies the conditions from Theorem \ref{thm:darboux1}.
\end{lem}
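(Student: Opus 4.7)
The plan is to mirror the inductive construction of Theorem~\ref{thm:darboux1} at the level of the lattice $L_g^\perp$ rather than only at the level of its quotient $N=L_g^\perp/L(1-g)$. The key observation is that every step in the Darboux procedure is an elementary $\Z$-linear substitution among basis elements, and such substitutions are realised by unimodular integer matrices, which therefore send a $\Z$-basis of $L_g^\perp$ to another $\Z$-basis. Thus the statement should not require any new algebra, only a careful tracking of representatives.

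First I would apply the Smith normal form procedure of Lemma~\ref{lem:smith} and the corollary following it to the inclusion $L(1-g)\subset L_g^\perp$. This produces a $\Z$-basis $\{v_1,\ldots,v_r\}$ of $L_g^\perp$ together with positive integers $s_1\mid s_2\mid\cdots\mid s_r$ such that $\{s_1 v_1,\ldots,s_r v_r\}$ is a $\Z$-basis of $L(1-g)$; passing to $N$, the classes $[v_i]$ form a generating set adapted to the invariant-factor decomposition, with $\mathrm{ord}([v_i])=s_i$. Since $C$ is non-degenerate and alternating, the number of invariant factors is even, so $r=2k$. This is exactly the input required to start the proof of Theorem~\ref{thm:darboux1}.

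Next I would run the inductive symplectisation of Theorem~\ref{thm:darboux1} directly on the lattice vectors $v_i$ rather than on their classes. At each stage the procedure modifies a basis vector via an integer substitution of the form $v_i\mapsto v_i+k_1 v_j-k_2 v_\ell$, with $k_1,k_2$ determined by the values of the form $C$ on the current basis. Because $C$ is defined on $L_g^\perp\times L_g^\perp$ and descends to $N$, evaluating $C$ on the lattice representatives yields the same roots of unity as in Theorem~\ref{thm:darboux1}, so the same integer coefficients may be used. Each such substitution is elementary unimodular and hence sends a $\Z$-basis of $L_g^\perp$ to another $\Z$-basis. After relabelling, the resulting vectors $\alpha_1,\ldots,\alpha_k,\beta_1,\ldots,\beta_k$ form a $\Z$-basis of $L_g^\perp$ whose images in $N$ automatically satisfy the pairing, order and symmetry conditions of Theorem~\ref{thm:darboux1}.

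The main obstacle is verifying that this lifting is truly automatic, i.e.\ that the Darboux procedure never forces a division at the lattice level. The coefficients $k_1,k_2$ in Theorem~\ref{thm:darboux1} are a priori only defined modulo the orders $s_i$, so one must check that any integer representative yields a unimodular substitution and that the key conclusion of each step, namely the invariance of the order of the modified basis element, depends only on the residues of $k_1,k_2$. Both points follow from the form of the substitution: it is elementary for any integer coefficients, and its reduction modulo $L(1-g)$ depends on $k_1,k_2$ only through those residues, so the construction terminates with the desired symplectic $\Z$-basis of $L_g^\perp$.
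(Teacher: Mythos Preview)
Your approach is essentially the same as the paper's: the paper's proof is the single line ``The orthogonalisation in proof of Theorem~\ref{thm:darboux1} describes valid change of basis of $L^{\perp}$,'' and your proposal spells out in detail exactly what that sentence means---namely that the substitutions $\tilde a_i = a_i + k_{i,r}a_j - k_{i,j}a_r$ in the Darboux procedure are unimodular over $\Z$ and hence lift verbatim to a change of $\Z$-basis of $L_g^\perp$. Your additional care about starting from a Smith-normal-form basis and about the integer representatives of the coefficients $k_{i,r},k_{i,j}$ is a welcome clarification of points the paper leaves implicit.
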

\begin{proof}
	The orthogonalisation in proof of  Theorem \ref{thm:darboux1} describes valid change of basis of $L^{\perp}$.
\end{proof}

\section{Lifting theory}

\subsection{Construction of the Lifting map}\label{app:lift}
Let $L$ be an even self-dual lattice with inner product $\langle.|.\rangle$ and gram matrix $G$ and let $g$ an automorphism of $L$ of order $n$.
Let $\mathfrak{h} = L \otimes_{\Z} \C$ denote the complex ambient vector space of $L$.
Let $(.,.)$ denote the standard euclidean inner product w.r.t. coordinate vectors in $L$ such that $\langle \alpha|\beta \rangle = (\alpha G, \beta) = (\alpha, \beta G)$.
Note that, by convention, vectors are row vectors and matrices act to the left.

To define lattice vertex operators, we need the central extension $\hat{L}$ of $L$ (as a free abelian group) defined by
\[
\eL_{\alpha}\eL_{\beta} = \epsilon(\alpha,\beta)\eL_{\alpha+\beta}, \quad \alpha,\beta \in L, \quad \eL_{\alpha},\eL_{\beta},\eL_{\alpha+\beta} \in \hat{L},
\]
where $\epsilon:L \times L \to \C^*$ is a $2$-cocycle such that \be\label{eq:comm}\frac{\epsilon(\alpha,\beta)}{\epsilon(\beta,\alpha)} = (-1)^{\langle \alpha,\beta \rangle}\ee.
More precisely,
\begin{defn}{\label{def:eps}}
	The 2-cocycle $\epsilon:L\times L \to \{\pm1\}$ is a bimultiplicative function 		satisfying
	\be\label{eq:eps_1}
	\epsilon(\alpha,\alpha) = (-1)^{\frac{|\alpha|^2}{2}}
	\ee
	and then by bimultiplicativity
	\be\label{eq:eps_2}
	\epsilon(\alpha,\beta)\epsilon(\beta,\alpha) = (-1)^{\langle \alpha|\beta \rangle} = (-1)^{(\alpha G, \beta)}.
	\ee
	The skew fixes in particular the cohomology class of $\epsilon$.
\end{defn}

\begin{defn}
	The lift of $g$ to a VOA automorphism $\hat g$ is determined by the function $\eta_g:L \to \C^*$ satisfying
	\be\label{eq:eta_def}
	\eta_g(\alpha)\eta_g(\beta)\epsilon(\alpha,\beta) = \eta_g(\alpha + \beta) \epsilon(\alpha g,\beta g).
	\ee
	We use the convention that lattice states $\eL_{\alpha} \in V_L$ transform as
	\be
	\hat{g}\eL_{\alpha} = \eta_g(\alpha)^{-1}\eL_{\alpha g}.
	\ee
\end{defn}
Let us now construct a lift of an element $g$. First, we construct the cocycle $\epsilon$ explicitly:
\begin{defn}
	Let $M$ be a matrix. Then let $\overline M$ denote the lower triangular matrix such that
	\begin{equation}
	\overline M_{i,j} = M_{i,j}, \quad \text{if } i > j
	\end{equation}
	and
	\begin{equation}
	\overline M_{i,i} = \frac{1}{2}M_{i,i}, \quad \text{for all } i.
	\end{equation}
	In particular, if $M$ is symmetric then
	\begin{equation}
	\overline M + \overline M^{\mathrm{T}} = M.
	\end{equation}
\end{defn}

\begin{lem}\label{lem:eps}
	A possible solution for equations \ref{eq:eps_1} and \ref{eq:eps_2} is given by
	\begin{equation}\label{eq:eps_sol}
	\epsilon(\alpha,\beta) = (-1)^{(\alpha \overline G,\beta)}.
	\end{equation}
\end{lem}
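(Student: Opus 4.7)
The proposition asserts three things about the candidate $\epsilon(\alpha,\beta) = (-1)^{(\alpha \overline G,\beta)}$: bimultiplicativity (implicit in the surrounding definition), the diagonal identity \ref{eq:eps_1}, and the skew identity \ref{eq:eps_2}. My plan is to verify these in turn, using only the defining decomposition $G = \overline G + \overline G^T$ of a symmetric gram matrix together with symmetry of the standard euclidean pairing $(\cdot,\cdot)$.

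Bimultiplicativity is essentially free: the exponent $(\alpha \overline G,\beta)$ is $\Z$-bilinear in the row vectors $\alpha,\beta$, so $(-1)^{(\alpha\overline G,\beta)}$ is automatically a bimultiplicative function $L\times L\to\{\pm1\}$. This also handles the 2-cocycle condition at no extra cost, since any bimultiplicative function on an abelian group satisfies $\epsilon(\alpha,\beta)\epsilon(\alpha+\beta,\gamma) = \epsilon(\alpha,\beta+\gamma)\epsilon(\beta,\gamma)$.

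For the diagonal identity \ref{eq:eps_1}, the key manipulation is to expand
\begin{equation*}
(\alpha G,\alpha) \;=\; (\alpha(\overline G + \overline G^T),\alpha) \;=\; (\alpha\overline G,\alpha) + (\alpha \overline G^T,\alpha),
\end{equation*}
and then invoke symmetry of $(\cdot,\cdot)$ to rewrite $(\alpha\overline G^T,\alpha) = (\alpha,\alpha\overline G) = (\alpha\overline G,\alpha)$. Both summands are therefore equal, giving $(\alpha\overline G,\alpha) = |\alpha|^2/2$ and hence $\epsilon(\alpha,\alpha) = (-1)^{|\alpha|^2/2}$, as required. For the skew identity \ref{eq:eps_2}, bimultiplicativity yields
\begin{equation*}
\epsilon(\alpha,\beta)\epsilon(\beta,\alpha) \;=\; (-1)^{(\alpha\overline G,\beta) + (\beta\overline G,\alpha)},
\end{equation*}
and the same symmetry trick converts $(\beta\overline G,\alpha)$ into $(\alpha\overline G^T,\beta)$. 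Summing collapses the exponent to $(\alpha(\overline G+\overline G^T),\beta) = (\alpha G,\beta) = \langle\alpha|\beta\rangle$, which is exactly the right-hand side of \ref{eq:eps_2}.

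There is really no substantive obstacle here: both identities reduce, via symmetry of $(\cdot,\cdot)$ and the defining splitting of $\overline G$, to the same one-line rewriting. The only point that merits a sentence is that $|\alpha|^2/2 \in \Z$ because $L$ is even, which is what makes $(-1)^{|\alpha|^2/2}$ well-defined as a sign and makes the computation intrinsically lattice-valued rather than ambient.
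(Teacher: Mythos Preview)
Your proof is correct and is essentially just an unpacking of what the paper dismisses as ``Immediate'': both rely on the decomposition $G=\overline G+\overline G^{\mathrm T}$ together with symmetry of $(\cdot,\cdot)$, and there is no other idea involved. Your remark that evenness of $L$ makes $\overline G$ integer-valued (hence $(-1)^{(\alpha\overline G,\beta)}$ well-defined) is a worthwhile clarification that the paper leaves implicit.
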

\begin{proof}
	Immediate.
\end{proof}
Now we can use this to construct the map
\begin{thm}\label{thm:etaexpression}
	Let $\epsilon(\alpha,\beta)$ be given by Equation \ref{eq:eps_sol}. Let $\{v_i\}$ be the basis of $L$ and $\alpha := \sum_i n_i v_i$ a vector in $L$. Then  the general solution of Equation \ref{eq:eta_def} is given by
	\begin{align}
	\eta_g(\sum_i n_i v_i)& = \prod_{i}\eta_g(v_i)^{n_i}\bigg(\frac{\epsilon(v_i,v_i)}{\epsilon(v_ig,v_ig)}\bigg)^{\frac{n_i(n_i-1)}{2}}\prod_{i > j} \bigg(\frac{\epsilon(v_i,v_j)}{\epsilon(v_ig,v_jg)}\bigg)^{n_in_j}\label{eq:eta_rec} \\
	& = (-1)^{(\alpha\overline{(\overline{G}-g\overline{G}g^{\mathrm{T}})},\alpha)}\prod_{i}\bigg(\eta_g(v_i)\bigg(\frac{\epsilon(v_i,v_i)}{\epsilon(v_ig,v_ig)}\bigg)^{-\frac{1}{2}}\bigg)^{n_i},
	\end{align}
	where we are free to choose the $\eta_g(v_i)$.
\end{thm}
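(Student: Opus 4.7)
The claim has two parts: that (\ref{eq:eta_rec}) gives the general solution of the cocycle condition (\ref{eq:eta_def}), and that this coincides with the stated closed form. The plan is to first establish uniqueness and existence of the solution, then derive the closed form by a direct computation.

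I would begin by rewriting (\ref{eq:eta_def}) as $\eta_g(\alpha+\beta) = \eta_g(\alpha)\eta_g(\beta)\,\epsilon(\alpha,\beta)/\epsilon(\alpha g,\beta g)$. Fixing the basis $\{v_i\}$, this shows that once $\eta_g(v_i)$ is chosen freely for each basis vector, the value of $\eta_g$ on every other element of $L$ is uniquely determined. Existence amounts to independence from the order in which one adds basis vectors, which follows from the fact that $\epsilon$ is itself a 2-cocycle on $L$, so the ``defect'' $(\alpha,\beta)\mapsto \epsilon(\alpha g,\beta g)/\epsilon(\alpha,\beta)$ inherits the cocycle identity and the associativity check is routine. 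Concretely, I would prove (\ref{eq:eta_rec}) by a double induction: first on $n$ that $\eta_g(n v_i) = \eta_g(v_i)^n (\epsilon(v_i,v_i)/\epsilon(v_i g, v_i g))^{n(n-1)/2}$, using the telescoping identity $1+2+\cdots+(n-1)=n(n-1)/2$; then on the number of distinct basis vectors appearing in $\alpha$, using bimultiplicativity of $\epsilon$ to collect the $i>j$ cross terms.

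For the closed form, I would substitute $\epsilon(\alpha,\beta) = (-1)^{(\alpha\overline G,\beta)}$ from Lemma~\ref{lem:eps}, obtaining
\[
\frac{\epsilon(\alpha,\beta)}{\epsilon(\alpha g,\beta g)} = (-1)^{(\alpha M,\beta)},\qquad M := \overline G - g\overline G g^{\mathrm{T}}.
\]
Because $g$ is an orthogonal automorphism of $L$ we have $g G g^{\mathrm{T}} = G$, hence $M + M^{\mathrm{T}} = G - gGg^{\mathrm{T}} = 0$, so $M$ is integrally antisymmetric and in particular $M_{ii}=0$. This has two consequences for (\ref{eq:eta_rec}): the middle factor becomes trivial, since $\epsilon(v_i,v_i) = \epsilon(v_i g,v_i g) = (-1)^{|v_i|^2/2}$; and the cross-term product reduces to $(-1)^{\sum_{i>j} n_i M_{ij} n_j} = (-1)^{(\alpha\overline M,\alpha)}$, using $\overline M_{ii}=M_{ii}/2=0$. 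This is exactly the closed form, with the factor $(\epsilon(v_i,v_i)/\epsilon(v_i g,v_i g))^{-1/2}$ collapsing harmlessly to $1$.

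The main obstacle I anticipate is the mod-$2$ bookkeeping in the last step: one must confirm that the quadratic expression $(\alpha\overline M,\alpha)$ correctly encodes the cross-terms of the antisymmetric bilinear form $(\alpha M,\beta)$, which ultimately rests on the identity $-M_{ij}\equiv M_{ij}\pmod 2$ for antisymmetric integer $M$. The inductive step in the existence/uniqueness proof is straightforward but tedious, requiring care with a fixed ordering of the basis indices when splitting the sum $\prod_{i>j}$.
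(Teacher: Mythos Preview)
Your argument is correct, but it proceeds in the opposite direction from the paper's proof. You first establish the recursive formula (\ref{eq:eta_rec}) by induction on the functional equation and then simplify it to the closed form using $M=\overline G - g\overline G g^{\mathrm T}$ antisymmetric. The paper instead takes the closed form $\eta_g(\alpha)=(-1)^{(\alpha\overline M,\alpha)}\prod_i(\cdots)^{n_i}$ as an ansatz and verifies directly that it satisfies (\ref{eq:eta_def}): writing $M=\overline M-\overline M^{\mathrm T}$, one expands $(\alpha\overline M,\alpha)+(\beta\overline M,\beta)+(\alpha M,\beta)$ and, after a sign flip mod $2$, collapses it to $((\alpha+\beta)\overline M,\alpha+\beta)$. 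The observation that $\epsilon(v_i,v_i)/\epsilon(v_ig,v_ig)=1$ is noted only at the end.

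Your route has the advantage of showing where the formula comes from and of treating the ``general solution'' claim (freedom in the $\eta_g(v_i)$) head-on; the paper's route is shorter because it bypasses the induction entirely and reduces the whole argument to a single bilinear identity. Both hinge on the same two facts you identified: antisymmetry of $M$ (hence integrality and $M_{ii}=0$), and the mod-$2$ collapse $-M_{ij}\equiv M_{ij}$.
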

\begin{proof}
	Let $M = \overline{G}-g\overline{G}g^{\mathrm{T}}$. Note that $M$ is antisymmetric, as
	\begin{align}
	\overline{G}-g\overline{G}g^{\mathrm{T}} + (\overline{G}-g\overline{G}g^{\mathrm{T}})^{\mathrm{T}} & =
	\overline{G}-g\overline{G}g^{\mathrm{T}} + 	\overline{G}^{\mathrm{T}}-g\overline{G}^{\mathrm{T}}g^{\mathrm{T}}  \\
	& = (\overline{G} + \overline{G}^{\mathrm{T}}) - g(\overline{G}+\overline{G}^{\mathrm{T}})g^{\mathrm{T}} \\
	& = G - gGg^{\mathrm{T}} = 0.
	\end{align}

	Let $\alpha = \sum_i n_i v_i$ and $\beta = \sum_i m_i v_i$. Then
	\begin{align}
	\eta_g(\alpha)\eta_g(\beta) & \frac{\epsilon(\alpha,\beta)}{\epsilon(\alpha g,\beta g)} =
	(-1)^{(\alpha \overline{M},\alpha)+(\beta \overline M, \beta) + (\alpha M,\beta)}\prod_{i}\bigg(\eta_g(v_i)\bigg(\frac{\epsilon(v_i,v_i)}{\epsilon(v_ig,v_ig)}\bigg)^{-\frac{1}{2}}\bigg)^{n_i+m_i}\label{ln:eta_pf1} \\
	& = (-1)^{(\alpha \overline{M},\alpha)+(\beta \overline M, \beta) +
		(\alpha \overline M,\beta) - (\alpha \overline{M}^{\mathrm{T}},\beta)}\prod_{i}\bigg(\eta_g(v_i)\bigg(\frac{\epsilon(v_i,v_i)}{\epsilon(v_ig,v_ig)}\bigg)^{-\frac{1}{2}}\bigg)^{n_i+m_i} \\
	& = (-1)^{(\alpha \overline{M},\alpha)+(\beta \overline M, \beta) +
		(\alpha \overline M,\beta) - (\beta \overline{M},\alpha)}\prod_{i}\bigg(\eta_g(v_i)\bigg(\frac{\epsilon(v_i,v_i)}{\epsilon(v_ig,v_ig)}\bigg)^{-\frac{1}{2}}\bigg)^{n_i+m_i}\label{ln:eta_pf3} \\
	& = (-1)^{(\alpha \overline{M},\alpha)+(\beta \overline M, \beta) +
		(\alpha \overline M,\beta) + (\beta \overline{M},\alpha)}\prod_{i}\bigg(\eta_g(v_i)\bigg(\frac{\epsilon(v_i,v_i)}{\epsilon(v_ig,v_ig)}\bigg)^{-\frac{1}{2}}\bigg)^{n_i+m_i} \\
	& = (-1)^{((\alpha + \beta) \overline M, (\alpha + \beta))}\prod_{i}\bigg(\eta_g(v_i)\bigg(\frac{\epsilon(v_i,v_i)}{\epsilon(v_ig,v_ig)}\bigg)^{-\frac{1}{2}}\bigg)^{n_i+m_i} \\
	& = \eta_g(\alpha + \beta),
	\end{align}
	where we write $M = \overline{M} - \overline{M}^{\mathrm{T}}$ in line \ref{ln:eta_pf1} and flip a sign in \ref{ln:eta_pf3}. Note that antisymmetry of $M$ implies that
	\begin{equation}
	\frac{\epsilon(v_i,v_i)}{\epsilon(v_ig,v_ig)} = 1.
	\end{equation}
\end{proof}

\subsection{Multiplication of lifted elements}\label{app:liftmult}

Let $g,h \in \mathrm{Aut}(L)$ be lattice automorphisms, and $\hat g, \hat h$ their lifts defined in section~\ref{app:lift},
\be\label{eq:co_s}
\hat{h}\eL_{\alpha} = \eta_{h}(\alpha)^{-1}\eL_{\alpha h}\ , \qquad
\hat{g}\eL_{\alpha} = \eta_{g}(\alpha)^{-1}\eL_{\alpha g}\ .
\ee

\begin{lem}
The multiplication in $\mathrm{Aut}(\hat L)$ is given by
\begin{equation}
	\hat g \hat h = \widehat{gh}s(g,h),
\end{equation}
where
\begin{equation}
	s(g,h) = \frac{\eta_{h}(\cdot)^{-1}\eta_{g}(h\cdot)^{-1}}{\eta_{gh}(\cdot)^{-1}}
\end{equation}
is a map $\mathrm{Aut}(L) \times \mathrm{Aut}(L) \to \mathrm{Hom}(L,\C^*)$ that satisfies the $2$-cocycle condition.
\end{lem}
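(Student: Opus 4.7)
\emph{Plan of proof.}
My plan is to unpack the definition of the lifts directly, read off $s(g,h)$ by computing $\hat g\hat h\,\eL_\alpha$ in two ways, and then establish both that $s(g,h)\in\Hom(L,\C^*)$ and that $s$ satisfies the $2$-cocycle condition. First I would compute, using $\hat g\,\eL_\alpha=\eta_g(\alpha)^{-1}\eL_{g\alpha}$ (and the analogous formulas for $\hat h$ and $\widehat{gh}$),
\begin{equation*}
\hat g\hat h\,\eL_\alpha \;=\; \eta_h(\alpha)^{-1}\eta_g(h\alpha)^{-1}\,\eL_{gh\alpha}
\;=\; \frac{\eta_h(\alpha)^{-1}\eta_g(h\alpha)^{-1}}{\eta_{gh}(\alpha)^{-1}}\,\widehat{gh}\,\eL_\alpha\ .
\end{equation*}
This identifies the correction factor as the pointwise expression claimed for $s(g,h)$, so that $\hat g\hat h = \widehat{gh}\,s(g,h)$ provided we can interpret $s(g,h)$ as an element of $\Hom(L,\C^*)$.

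The second step is to verify multiplicativity $s(g,h)(\alpha+\beta)=s(g,h)(\alpha)\,s(g,h)(\beta)$. For this I would invoke the defining relation (\ref{eq:eta_def}) applied to each of $\eta_g$, $\eta_h$ and $\eta_{gh}$ and expand $s(g,h)(\alpha+\beta)$. The $\epsilon$-factors appearing in the three relations cancel in the ratio: the $\epsilon(\alpha,\beta)$ coming from $\eta_{gh}$ cancels against the one from $\eta_h$, and the $\epsilon(gh\alpha,gh\beta)$ from $\eta_{gh}$ cancels against the corresponding factor produced from $\eta_g(h\alpha)\eta_g(h\beta)$ via $\epsilon(h\alpha,h\beta)$. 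What remains is exactly the desired multiplicativity, confirming $s(g,h)\in\Hom(L,\C^*)$.

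The third step is the $2$-cocycle identity. I would deduce it from associativity in $\Aut(\hat L)$: computing $(\hat g\hat h)\hat k$ and $\hat g(\hat h\hat k)$ using the already-established relation $\hat f_1\hat f_2=\widehat{f_1f_2}\,s(f_1,f_2)$, together with the conjugation rule $s\,\hat k = \hat k\,s^{k}$ where $s^k(\alpha):=s(k\alpha)$ (which follows from the defining action on $\eL_\alpha$). Equating the two expressions yields the twisted cocycle condition
\begin{equation*}
s(gh,k)(\alpha)\,s(g,h)(k\alpha)\;=\;s(g,hk)(\alpha)\,s(h,k)(\alpha)\ ,
\end{equation*}
which is the $2$-cocycle condition for $s$ with values in the $\Aut(L)$-module $\Hom(L,\C^*)$.

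The only step that demands genuine care is the multiplicativity check, since one has to track the $\epsilon$-factors attached to $\eta_g$, $\eta_h$ and $\eta_{gh}$ separately and confirm that they telescope. The remaining steps are essentially bookkeeping once the convention for composition versus action is fixed, and the $2$-cocycle condition then comes for free from associativity of $\Aut(\hat L)$.
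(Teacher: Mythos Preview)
Your proposal is correct and follows essentially the same approach as the paper: compute $\hat g\hat h\,\eL_\alpha$ and compare with $\widehat{gh}\,\eL_\alpha$ to read off $s(g,h)$. The paper's own proof stops there and simply asserts that the cocycle property ``can be verified directly,'' so your explicit multiplicativity check via the $\epsilon$-cancellation and your derivation of the $2$-cocycle identity from associativity in $\Aut(\hat L)$ are welcome elaborations rather than a different route.
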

\begin{proof}
	We calculate
	\be
	\hat{g}\hat{h}\eL_{\alpha} = \eta_{h}(\alpha)^{-1}\eta_{g}(h\alpha)^{-1}\eL_{\alpha hg}
	\ee
	and
	\be
	\widehat{gh}\eL_{\alpha} = \eta_{gh}(\alpha)^{-1}\eL_{\alpha hg}.
	\ee
	If follows that
	\be
	\hat{g}\hat{h} = \widehat{gh} \frac{\eta_{h}(\cdot)^{-1}\eta_{g}(h\cdot)^{-1}}{\eta_{gh}(\cdot)^{-1}}.
	\ee
 The cocycle property can be verified directly.
\end{proof}
Note in particular that $s(g,h)$ is not central in $\mathrm{Aut}(\hat{L})$.

\subsection{Standard Lifts}\label{app:standardlift}
Let $g \in \mathrm{Aut(L)}$ and let be $L_g \subset L$ be the sublattice of $g$-invariant elements. By definition (\ref{eq:eta_def}) the restriction $\eta_g|_{L_g}$ to the fixed-point lattice is a homomorphism. Hence for any $\xi_g \in\mathrm{Hom}(L,\C^*)$ such that $\xi_g|_{L_g} = \eta_g|_{L_g}^{-1}$ the lift $\xi_g(\cdot)\eta_g(\cdot)$ is a standard lift. In particular, note that by lemma \ref{lem:smith} any basis of $L_g$ can be extended to a basis of $L$ and hence such a $\xi_g$ exists.

\subsection{Lifting for $\Z_q \rtimes_{\phi} \Z_p$}\label{app:liftZqZp}

\begin{lem}
	As a finitely generated group $\Z_q \rtimes_{\phi} \Z_p$ is defined by the relations
	\begin{align}
	g^p & =  \id\label{rel:order_p} \\
	h^q & =  \id \label{rel:order_q}\\
	ghg^{-1} & = h^{\phi}\label{rel:comm}.
	\end{align}
\end{lem}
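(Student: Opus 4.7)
The plan is to show that the abstract group $\Gamma$ presented by generators $g, h$ and the three relations (\ref{rel:order_p}), (\ref{rel:order_q}), (\ref{rel:comm}) is isomorphic to the semidirect product $\Z_q \rtimes_\phi \Z_p$ constructed as pairs in Definition~\ref{defGqp}. I would do this by constructing a surjection in one direction and an upper bound on the order of $\Gamma$ in the other.

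First I would verify that the relations hold in $\Z_q \rtimes_\phi \Z_p$ by taking the elements $\tilde g = (0,1)$ and $\tilde h = (1,0)$ in the usual semidirect product presentation, whose multiplication is $(a_1,b_1)(a_2,b_2) = (a_1 + \phi^{b_1} a_2,\, b_1+b_2)$. A short computation gives $\tilde g^p = (0,p) = e$, $\tilde h^q = (q,0) = e$, and $\tilde g \tilde h \tilde g^{-1} = (\phi, 0) = \tilde h^\phi$. Since $\tilde g$ and $\tilde h$ generate $\Z_q \rtimes_\phi \Z_p$, the universal property of a presentation gives a surjective homomorphism $\pi: \Gamma \twoheadrightarrow \Z_q \rtimes_\phi \Z_p$.

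Next I would obtain the reverse bound $|\Gamma| \le pq$ by producing a normal form. The relation (\ref{rel:comm}) may be rewritten as $hg = gh^{\phi^{-1}}$; here $\phi$ is invertible modulo $q$ because (\ref{eq:phi}) gives $\phi^p \equiv 1 \pmod q$, so $\phi^{p-1}$ is its inverse. Iterating this rewrite moves every occurrence of $h$ to the right of every $g$, so any word in $g, h, g^{-1}, h^{-1}$ can be reduced to the form $g^i h^j$. Combined with (\ref{rel:order_p}) and (\ref{rel:order_q}) we may take $0 \le i < p$ and $0 \le j < q$, giving at most $pq$ elements in $\Gamma$.

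Since $|\Z_q \rtimes_\phi \Z_p| = pq$ and $\pi$ is surjective, we must have $|\Gamma| = pq$ and $\pi$ is an isomorphism. The only substantive point, and the reason this is a lemma rather than a definition, is the observation that the relations are mutually consistent precisely because of (\ref{eq:phi}): conjugating $h$ by $g^p = \id$ yields $h = h^{\phi^p}$, so any presentation of this shape necessarily forces $\phi^p \equiv 1 \pmod q$. Thus condition (\ref{eq:phi}) is not an independent hypothesis but rather the exact compatibility required to make the presentation non-trivial, and its role should be highlighted in the write-up.
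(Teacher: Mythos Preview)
Your argument is correct; it is the standard von Dyck approach (a surjection from the presented group onto the concrete semidirect product, combined with an order bound via normal forms). The paper itself states this lemma without proof, treating it as a well-known fact about presentations of semidirect products, so there is no proof in the paper to compare against.

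One minor wording point: the relations are always ``consistent'' in the sense that they present \emph{some} group; what condition~(\ref{eq:phi}) actually guarantees is that the order of $h$ does not collapse below $q$ (since $g^p = \id$ forces $h = h^{\phi^p}$), so that $|\Gamma|$ attains the full value $pq$ rather than a proper divisor of it. This is exactly the point you make, but ``non-trivial'' is a bit loose.
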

We want to construct a splitting lift $\tilde \eta_g, \tilde \eta_h$ such that $\hat g$, $\hat h$ still satisfy (\ref{rel:order_p}) through (\ref{rel:comm}) and thus generate the same group $\iota(G) = \Z_q \rtimes_{\phi} \Z_p$.

For simplicity we assume that we are given standard lifts $\eta_g$ and $\eta_h$ without order doubling, that is that
the relations \ref{rel:order_p} and \ref{rel:order_q} are automatically satisfied. It is straightforward to generalize our construction otherwise.
Hence for given standard lifts $\eta_g$ and $\eta_h$ define the homomorphism
\begin{equation}\label{fdef}
f(\alpha) := \eta_g(\alpha g^{-1}h)\overline{\eta}_g(\alpha g^{-1})\eta_h(\alpha g^{-1})\prod_{i=0}^{\phi-1}\overline{\eta}_h(\alpha h^i).
\end{equation}
We then have the following lemma:
\begin{lem}
	The relation $\gl^{-1}\hl\gl  = \hl^{\phi}$ is equivalent to $\eta_{g,h}$ satisfying
	\begin{equation}\label{eq:lift}
	f(\alpha)= 1\ .
	\end{equation}
\end{lem}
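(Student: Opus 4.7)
The plan is to verify the equivalence by evaluating both sides of the lifted relation on a spanning set of $V_L$. Since a VOA automorphism of $V_L$ is determined by its action on the Heisenberg subalgebra $\mathfrak{h}$ and on the lattice basis states $\{e_\alpha : \alpha \in L\}$, and since the action on $\mathfrak{h}$ is inherited directly from the underlying lattice automorphism, the lattice relation $ghg^{-1} = h^\phi$ already forces both sides to agree on $\mathfrak{h}$. Hence the lifted identity reduces to a scalar condition on the $\eta$-phases picked up by iterating the action on each $e_\alpha$.

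The next step is to apply the defining formulas $\gl e_\alpha = \eta_g(\alpha)^{-1} e_{\alpha g}$ and $\gl^{-1} e_\alpha = \eta_g(\alpha g^{-1}) e_{\alpha g^{-1}}$, and the analogous formulas for $\hl$, in the prescribed order. Acting with $\gl^{-1}\hl\gl$ on $e_\alpha$ yields a vector proportional to $e_{\alpha ghg^{-1}} = e_{\alpha h^\phi}$ by the lattice relation, while $\hl^\phi e_\alpha$ gives $\prod_{i=0}^{\phi-1} \eta_h(\alpha h^i)^{-1} e_{\alpha h^\phi}$. The lattice parts match automatically; equating the scalar prefactors yields a single multiplicative identity in the $\eta$-factors. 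A suitable change of variable (for instance $\alpha \mapsto \alpha g^{-1}$) then brings this identity into exactly the form $f(\alpha) = 1$ as defined.

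The main difficulty is purely bookkeeping. The specific arguments $\alpha g^{-1}$, $\alpha g^{-1} h$, and $\alpha h^i$ appearing in $f$ reflect the chosen order in which $\gl$, $\hl$, $\gl^{-1}$ are applied and the chosen substitution; the proof amounts to tracking the resulting $\eta$-factors carefully. The only slightly subtle step is the use of the lattice relation $ghg^{-1} = h^\phi$ to simplify composite arguments such as $\alpha ghg^{-1}$ into $\alpha h^\phi$, which is what allows the two scalar prefactors to be compared term by term and identified with the definition of $f$.
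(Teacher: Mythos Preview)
Your proposal is correct and is essentially the same approach as the paper's proof: both reduce the identity of VOA automorphisms to the action on lattice states $e_\alpha$ and then track the accumulated $\eta$-phases. The only cosmetic difference is that the paper applies the single rearranged operator $\hl^{-\phi}\gl\hl\gl^{-1}$ to $e_\alpha$ (so the arguments $\alpha g^{-1}$, $\alpha g^{-1}h$, $\alpha h^i$ appear directly), whereas you compare $\gl^{-1}\hl\gl\,e_\alpha$ with $\hl^\phi e_\alpha$ and then substitute; this is the same computation up to a relabelling.
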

\begin{proof}
	Since $\gl$ acts on lattice states as
	\begin{equation}
	\gl \mathfrak e_{\alpha} = \eta_g(\alpha)^{-1}\mathfrak e_{\alpha g}\ ,
	\end{equation}
	we have
	\begin{align}\label{liftpower}
	\gl^n\mathfrak e_{\alpha}  = \eta_{g^n}(\alpha)^{-1} e_{\alpha g^n} \ =  \prod_{i=0}^{n-1}\eta_g(\alpha g^i)^{-1} \mathfrak e_{\alpha g^n}
	\end{align}
	and
	\begin{equation}\label{liftinverse}
	\hat g^{-1}\mathfrak e_{\alpha} = \overline{\eta}_g(\alpha g^{-1})^{-1} \mathfrak e_{\alpha g^{-1}}\ .
	\end{equation}
	Using (\ref{liftpower}) and (\ref{liftinverse}) and the fact that $g^{-1}hgh^{-\phi}=e$, a straightforward computation gives
\be
 \hat h^{-\phi} \hat g \hat h\hat g^{-1}\mathfrak e_{\alpha}
=\overline \eta_g(\alpha g^{-1})\eta_h(\alpha g^{-1})\eta_g(\alpha g^{-1}h)\prod_{i=0}^{\phi-1}\overline\eta_h(\alpha h^i)\mathfrak e_{\alpha}\ ,
\ee
which establishes (\ref{eq:lift}).

\end{proof}
In general of course our given standard lifts $\eta_g$ and $\eta_h$ will not satisfy (\ref{eq:lift}). We therefore want to construct homomorphisms $\xi_g$ and $\xi_h$ such that the new lifts $\tilde \eta_g = \eta_g\xi_g$ and $\tilde \eta_h = \eta_h\xi_h$ do satisfy Equation \ref{eq:lift}. In order for the new $\tilde \eta_g$ and $\tilde \eta_h$ to still be standard lifts we demand that $\xi_g(L_g) = \xi_h(L_h) = 1$.
To satisfy (\ref{eq:lift}), $\xi_g$ and $\xi_h$ must satisfy
\begin{equation}\label{eq:def_xi}
\xi_g(\alpha g^{-1}(h-1))\xi_h(\alpha (g^{-1}-\sum_{i=0}^{\phi-1}h^i)) = f(\alpha).
\end{equation}
We now want to construct such homomorphisms $\xi_h,\xi_g$. To do this, we will want to use the Smith formal form of the quotient $\Lambda_h :=L/L_h$. Let us first establish that the action of $g$ is well defined on this quotient:
\begin{lem}\label{lem:Lh}
$gL_h \subset L_h$
\end{lem}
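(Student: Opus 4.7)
The plan is to exploit the normalizing relation $ghg^{-1} = h^\phi$ built into the semidirect product $\Z_q \rtimes_\phi \Z_p$, together with the convention fixed earlier in Appendix~\ref{app:lift} that lattice automorphisms act on $L$ from the right. Rewriting the conjugation relation in $\Aut(L)$ as $gh = h^\phi g$ shows that $g$ intertwines the action of $h$ with that of $h^\phi$; since $h^\phi$ is merely a power of $h$, it fixes every element of $L_h$ pointwise. This is the only ingredient needed.

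Concretely, I would take an arbitrary $\alpha \in L_h$, so that $\alpha h = \alpha$, and hence $\alpha h^k = \alpha$ for every integer $k$, in particular for $k = \phi$. To check that $\alpha g$ is again $h$-invariant one then computes
\begin{equation*}
(\alpha g)\, h \;=\; \alpha\,(gh) \;=\; \alpha\,(h^\phi g) \;=\; (\alpha h^\phi)\, g \;=\; \alpha g,
\end{equation*}
using associativity of the right action and the semidirect product relation. Hence $\alpha g \in L_h$, and since $\alpha \in L_h$ was arbitrary we obtain $gL_h \subset L_h$ (where $gL_h$ is read, consistent with the rest of the paper, as the image of $L_h$ under the right action of $g$).

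There is no real obstacle; once the right-action convention and the identity $gh = h^\phi g$ are written down, the argument is a single line, and no properties of $\phi$ beyond it being an integer are used. The content of the lemma is organizational rather than technical: its role is to guarantee that the $g$-action descends to the quotient $\Lambda_h := L/L_h$, which is exactly what is needed in the subsequent construction of the correction homomorphisms $\xi_g, \xi_h$ that enforce the splitting condition~\eqref{eq:def_xi}.
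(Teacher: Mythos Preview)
Your argument is correct and rests on the same idea as the paper's: the normality of $\langle h\rangle$ in $\Z_q\rtimes_\phi\Z_p$ forces $g$ to intertwine $h$ with a power of itself, so $g$ preserves $h$-fixed vectors. The paper phrases this via the projector $\pi_h=\sum_{i=0}^{q-1}h^i$ and the identity $g\pi_h g^{-1}=\pi_h$, whereas you compute directly with the relation $gh=h^\phi g$ on an arbitrary $\alpha\in L_h$; the two presentations are equivalent.
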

\begin{proof}
The projector $\pi_h = \sum_{i=0}^{q-1} h^i$ onto the $h$ invariant subspace commutes with $g$,
\be
g \pi_h g^{-1} = \pi_h\ .
\ee
This follows from the fact that $\Z_q$ is a normal subgroup of $G$, such that $g \Z_q g^{-1} = \Z_q$. It follows that $gL_h \subset L_h$.
\end{proof}

\begin{cor}
	$f(\alpha) = 1$ for all $\alpha \in L_h$.
\end{cor}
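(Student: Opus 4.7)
The plan is to unpack the definition of $f(\alpha)$ factor by factor and show that each piece becomes either $1$ directly (from the standard-lift condition on $\eta_h$) or collapses in a pair of the form $\eta_g(\beta)\overline\eta_g(\beta)=1$. The two ingredients I will need are: (i) $\eta_h$ trivializes on $L_h$ because we are working with a standard lift; and (ii) $L_h$ is preserved not only by $g$, as in Lemma~\ref{lem:Lh}, but also by $g^{-1}$, so that the shift $\alpha\mapsto\alpha g^{-1}$ keeps us inside the fixed lattice.

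First I would strengthen Lemma~\ref{lem:Lh} to the statement $g^{-1}L_h\subset L_h$. This is immediate: the lemma's identity $g\pi_hg^{-1}=\pi_h$ is equivalent to $g^{-1}\pi_h g=\pi_h$, so the argument carries over verbatim; alternatively, since $g$ has finite order and $gL_h\subset L_h$, $g$ restricts to an automorphism of the sublattice $L_h$ whose inverse is $g^{-1}|_{L_h}$.

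With this in hand, fix $\alpha\in L_h$. Then $\alpha h^i=\alpha$ for all $i\ge 0$, and $\alpha g^{-1}\in L_h$ is also fixed by $h$, so in particular $\alpha g^{-1}h=\alpha g^{-1}$. Substituting these identities into
\[
f(\alpha)=\eta_g(\alpha g^{-1}h)\,\overline{\eta}_g(\alpha g^{-1})\,\eta_h(\alpha g^{-1})\prod_{i=0}^{\phi-1}\overline{\eta}_h(\alpha h^i),
\]
the first two factors collapse to $\eta_g(\alpha g^{-1})\overline{\eta}_g(\alpha g^{-1})=1$; the third factor equals $\eta_h(\alpha g^{-1})=1$ because $\eta_h$ is a standard lift and $\alpha g^{-1}\in L_h$; and the product reduces to $\overline{\eta}_h(\alpha)^{\phi}=1$ by the same standard-lift property applied to $\alpha$ itself. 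Multiplying these yields $f(\alpha)=1$.

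The argument is essentially bookkeeping, and I do not anticipate any serious obstacle. The only conceptual ingredient is the stability of $L_h$ under $g^{\pm 1}$, which is already packaged into Lemma~\ref{lem:Lh} via the normality of $\langle h\rangle$ in $G$; everything else is the standard-lift normalization fixed in Appendix~\ref{app:standardlift}. The corollary is what one should expect: the potential obstruction $f$ is defined as a homomorphism on $L$, and its vanishing on the fixed sublattice $L_h$ is precisely what makes the later construction of the splitting lift tractable by adjusting $\eta_g$ and $\eta_h$ by homomorphisms $\xi_{g,h}$ that themselves vanish on the fixed lattices.
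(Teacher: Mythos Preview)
Your proof is correct and follows exactly the route the paper intends: the paper's one-line proof cites precisely the definition of $f$, the standard-lift property of $\eta_h$, and Lemma~\ref{lem:Lh}, and your argument is a faithful unpacking of those ingredients. Your observation that one actually needs $g^{-1}L_h\subset L_h$ rather than $gL_h\subset L_h$ is a minor sharpening, but it follows immediately from the same normality argument (or from $g$ having finite order), so there is no genuine departure from the paper's approach.
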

\begin{proof}
	Follows from (\ref{fdef}), the fact that $\eta_h$ is a standard lift, and Lemma~\ref{lem:Lh}.
\end{proof}
We want to define $\xi_g,\xi_h$ using the following basis of $L$:
\begin{lem}\label{thm:decomp}
	There exist two bases $\{\alpha_1, \ldots, \alpha_k, \gamma_1, \ldots, \gamma_l,\epsilon_1, \ldots, \epsilon_m\}$  and $\{\beta_1, \ldots, \beta_k, \delta_1, \ldots, \delta_l,\epsilon_1, \ldots, \epsilon_m\}$ of $L$ together with positive integers $s_i$
	such that
	\bea
	&&\alpha_i (g^{-1}-\sum_{i=0}^{\phi-1}h^i) - s_i \beta_i \in L_h \label{decomp1} \\
	&&\gamma_j(g^{-1}-\sum_{i=0}^{\phi-1}h^i) \in L_h \label{decomp2} \\
	&&\epsilon_j(g^{-1}-\sum_{i=0}^{\phi-1}h^i) \in L_h\ .\label{decomp3}
	\eea
\end{lem}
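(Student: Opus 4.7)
The plan is to reduce the claim to a Smith normal form computation for the operator $T := g^{-1} - \sum_{i=0}^{\phi-1} h^i$ on the quotient $L/L_h$. The strategy is first to ensure that this quotient is a free $\Z$-module of finite rank, next to check that $T$ descends to an endomorphism of it, and finally to pull the resulting normal form bases back up to $L$ together with a basis of $L_h$.

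First, I would verify that $L_h$ is a primitive sublattice of $L$, so that $L/L_h$ is torsion-free and hence a finitely generated free $\Z$-module: if $n\alpha \in L_h$ for some positive integer $n$ and $\alpha \in L$, then $n(\alpha h - \alpha) = 0$ in the torsion-free group $L$, so $\alpha \in L_h$. Next, I would show that $T$ preserves $L_h$ and therefore descends to an endomorphism $\bar T : L/L_h \to L/L_h$. This uses lemma~\ref{lem:Lh} applied to $g^{-1} \in G$, which yields $g^{-1}L_h \subset L_h$, together with the trivial observation $h^i L_h = L_h$.

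Having $\bar T$ as an endomorphism of a free $\Z$-module of finite rank, I would then apply Smith normal form to obtain two $\Z$-bases $\{[\alpha_1], \ldots, [\alpha_k], [\gamma_1], \ldots, [\gamma_l]\}$ and $\{[\beta_1], \ldots, [\beta_k], [\delta_1], \ldots, [\delta_l]\}$ of $L/L_h$, together with positive integers $s_1 \mid s_2 \mid \cdots \mid s_k$, such that $[\alpha_i]\, \bar T = s_i [\beta_i]$ and $[\gamma_j]\, \bar T = 0$. Choosing any lifts $\alpha_i, \gamma_j, \beta_i, \delta_j \in L$ of these classes and any $\Z$-basis $\{\epsilon_1, \ldots, \epsilon_m\}$ of $L_h$, one obtains the two bases of $L$ demanded by the lemma: conditions (\ref{decomp1}) and (\ref{decomp2}) translate directly the Smith normal form relations, while (\ref{decomp3}) is immediate from $T(L_h) \subset L_h$. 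The only real subtleties are the bookkeeping items — primitivity of $L_h$ and stability of $L_h$ under $T$ — after which Smith normal form does all the work.
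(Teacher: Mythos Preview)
Your proposal is correct and follows essentially the same route as the paper: both proofs verify that $T = g^{-1}-\sum_{i=0}^{\phi-1}h^i$ preserves $L_h$ (via Lemma~\ref{lem:Lh}, using that $g$ has finite order so $g^{-1}L_h\subset L_h$ as well), apply Smith normal form to the induced endomorphism of the free $\Z$-module $L/L_h$, and then lift the resulting bases together with a basis of $L_h$. Your explicit verification of primitivity of $L_h$ is a small extra detail the paper leaves implicit, but otherwise the arguments coincide.
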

\begin{proof}
	Let $L = \Lambda_h + L_h$ be a decomposition into the fixed-point sublattice $L_h$ and the primitive sublattice $\Lambda_h$. Pick $\{\epsilon_j\}$ to be a basis of $L_h$. Lemma~\ref{lem:Lh} implies (\ref{decomp3}), and also establishes that $g^{-1}-\sum_{i=0}^{\phi-1}h^i$ acts on the quotient $L/L_h$.
	We can therefore use the Smith normal form for $L/L_h$, meaning that there exist bases $\{a_1, \ldots, a_k, c_1, \ldots, c_l\}$ and $\{b_1, \ldots, b_k, d_1, \ldots, d_l\}$ of $L/L_h$ such that
	\begin{equation}
	a_i(g^{-1}-\sum_{i=0}^{\phi-1}h^i) = s_i b_i
	\end{equation}
	and
	\begin{equation}
	c_j(g^{-1}-\sum_{i=0}^{\phi-1}h^i) = 0,
	\end{equation}
	where $\{s_1, \ldots, s_k\}$ are the elementary divisors of $g^{-1}-\sum_{i=0}^{\phi-1}h^i$.
	Using the isomorphism $\Lambda_h \cong L/L_h$ as free $\Z$-modules, we can lift the above bases of $L/L_h$ to bases $\{\alpha_1, \ldots, \alpha_k, \gamma_1, \ldots, \gamma_l\}$  and $\{\beta_1, \ldots, \beta_k, \delta_1, \ldots, \delta_l\}$ of $\Lambda_h$. (\ref{decomp1}) and (\ref{decomp2}) are then automatically satisfied.
\end{proof}

Using this basis, we can now define the $\xi_g,\xi_h$. It turns out that there is a potential obstruction having to do with the value of $f$ on the $\gamma_i$:

\begin{thm}\label{ZqZplift}
	Let $f$ satisfy $f(\gamma_i) = 1$, $i = 1, \ldots, l$. Then a solution to Equation \ref{eq:def_xi} is given by
	\begin{equation}
	\xi_g(\alpha) = 1, \text{ for all } \alpha \in L
	\end{equation}
	and
	\begin{equation}
	\xi_h(\beta_i) = f(\alpha_i)^{\frac{1}{s_i}}\ , \quad \xi_h(\delta_j) = 1\ , \quad	\xi_h(\epsilon_j) = 1 \ .
	\end{equation}
\end{thm}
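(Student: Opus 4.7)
The plan is to verify directly that the prescribed $\xi_g,\xi_h$ are well-defined homomorphisms in $\Hom(L,\C^*)$ with the correct vanishing properties on the fixed lattices, and then to check equation~\ref{eq:def_xi} on the basis supplied by Lemma~\ref{thm:decomp}. Since both sides of~\ref{eq:def_xi} are homomorphisms of $L$ (the right-hand side $f$ was noted to be a homomorphism after~(\ref{fdef}), and the left-hand side is a composition of homomorphisms), equality on a basis of $L$ will suffice.

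First I would set up the homomorphisms. Taking $\xi_g\equiv 1$ makes the first factor in~\ref{eq:def_xi} trivial and automatically ensures $\xi_g|_{L_g}=1$. To define $\xi_h$, I would use the basis $\{\beta_1,\ldots,\beta_k,\delta_1,\ldots,\delta_l,\epsilon_1,\ldots,\epsilon_m\}$ of $L$ from Lemma~\ref{thm:decomp}, assign the values in the statement, and extend multiplicatively. Since $\{\epsilon_j\}$ is a basis of $L_h$ and $\xi_h(\epsilon_j)=1$, the standard-lift condition $\xi_h|_{L_h}=1$ holds.

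Next I would verify equation~\ref{eq:def_xi} on the basis $\{\alpha_i,\gamma_j,\epsilon_k\}$ of $L$. For $\alpha=\alpha_i$, Lemma~\ref{thm:decomp} gives $\alpha_i(g^{-1}-\sum_{t=0}^{\phi-1}h^t)=s_i\beta_i+\mu_i$ with $\mu_i\in L_h$, so
\[
\xi_h\bigl(\alpha_i(g^{-1}-\textstyle\sum h^t)\bigr)=\xi_h(\beta_i)^{s_i}\xi_h(\mu_i)=f(\alpha_i)\cdot 1=f(\alpha_i),
\]
as required. For $\alpha=\gamma_j$, the corresponding image lies in $L_h$, so the left-hand side equals $1$; the hypothesis $f(\gamma_j)=1$ matches this. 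For $\alpha=\epsilon_k\in L_h$, the image again lies in $L_h$ (by Lemma~\ref{lem:Lh} applied to both $g$ and $h$-fixed directions), and the corollary preceding Lemma~\ref{thm:decomp} gives $f(\epsilon_k)=1$, so both sides equal $1$.

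The main (and only) obstruction is precisely the hypothesis $f(\gamma_i)=1$: in the basis of Lemma~\ref{thm:decomp} the $\gamma_j$ are sent into $L_h$ by $g^{-1}-\sum h^t$, so no choice of $\xi_h$ on $\{\beta_i,\delta_j,\epsilon_k\}$ can produce a nontrivial value on $\gamma_j(g^{-1}-\sum h^t)$. If $f(\gamma_j)\neq 1$ for some $j$, the equation cannot be solved, which is exactly the potential obstruction referenced in the main text. Under the stated hypothesis the verification is immediate, and extending by multiplicativity yields~\ref{eq:def_xi} on all of $L$.
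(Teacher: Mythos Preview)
Your proposal is correct and follows essentially the same approach as the paper: verify~\eqref{eq:def_xi} on the basis $\{\alpha_i,\gamma_j,\epsilon_k\}$ after observing that $\xi_g\equiv 1$ kills the first factor, then use Lemma~\ref{thm:decomp} to reduce each case to the defining values of $\xi_h$. You are in fact slightly more careful than the paper in two places: you explicitly note that both sides of~\eqref{eq:def_xi} are homomorphisms (so a basis check suffices), and you handle the $L_h$-correction $\mu_i$ in $\alpha_i(g^{-1}-\sum h^t)=s_i\beta_i+\mu_i$ rather than silently identifying $s_i\beta_i$ with the full image.
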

\begin{proof}
	We prove this by evaluating (\ref{eq:def_xi}) on the basis $\{\alpha_i,\gamma_j,\epsilon_j\}$. Note that the first factor in (\ref{eq:def_xi}) vanishes because $\xi_g=1$.
	We immediately have
		\be
		f(\epsilon_j)  = 1 =  \xi_h(\epsilon_j(g^{-1}-\sum_{i=0}^{\phi-1}h^i))\ .
		\ee
	By (\ref{decomp1}) we have
	\be
	f(\alpha_i)  =  \xi_h(s_i \beta_i)  = \xi_h(\alpha_i(g^{-1}-\sum_{i=0}^{\phi-1}h^i))
	\ee
	and by (\ref{decomp2})
	\be
f(\gamma_i)  = 1  = \xi_h(\gamma_i(g^{-1}-\sum_{i=0}^{\phi-1}h^i))
\ee
\end{proof}

Let us now discuss under what conditions we indeed have $f(\gamma_i) = 1$.
Our goal is to replace this condition with a slightly different one, which we can then check efficiently on a case by case basis.

\begin{lem}\label{lem:kers}
	\be
	g^{-1}(h-1)(1-g) = -(g^{-1}-\sum_{i=0}^{\phi-1}h^i)(1-h)	\ ,
	\ee
\end{lem}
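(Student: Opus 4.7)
The plan is to verify the identity by direct expansion in the group algebra of $\Z_q\rtimes_\phi\Z_p$, using only two ingredients: the telescoping identity $\left(\sum_{i=0}^{\phi-1}h^i\right)(1-h) = 1 - h^\phi$, and the commutator relation in the form $g^{-1}hg = h^\phi$ implicit in the translation of $\gl\hl\gl^{-1}=\hl^{\phi}$ into Equation \eqref{eq:lift_main}.

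First I would distribute the left-hand side. From $(h-1)(1-g) = h - hg - 1 + g$, multiplication on the left by $g^{-1}$ gives the four terms $g^{-1}h - g^{-1}hg - g^{-1} + 1$.

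Second I would simplify the right-hand side. Pulling the minus sign inside produces $-g^{-1}(1-h) + \left(\sum_{i=0}^{\phi-1}h^i\right)(1-h)$; the telescoping identity collapses the second summand to $1-h^\phi$, leaving the four terms $-g^{-1} + g^{-1}h + 1 - h^\phi$.

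Subtracting the two expressions, everything cancels except $h^\phi - g^{-1}hg$, and a single appeal to the commutator relation closes the argument. The proof is purely algebraic: no lattice structure is used, and nothing from the group enters except its defining commutator. The step a reader is most likely to want to double-check — and thus the only real obstacle — is keeping the handedness of the conjugation relation consistent with the convention used earlier in this appendix, since it is $g^{-1}hg = h^\phi$ rather than $ghg^{-1}=h^\phi$ which is the form that makes the two sides of the identity match.
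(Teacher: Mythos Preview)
Your proof is correct and is exactly the approach the paper takes: its entire proof reads ``Expand both sides and use (\ref{rel:comm}).'' Your observation about the handedness is well taken: the identity indeed requires $g^{-1}hg = h^{\phi}$, which is the form actually used in the proof of the preceding lemma (via ``$g^{-1}hgh^{-\phi}=e$''), even though the relation labelled (\ref{rel:comm}) is stated as $ghg^{-1}=h^{\phi}$; the paper is slightly inconsistent here, and you have identified the correct convention for this computation.
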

\begin{proof}
	Expand both sides and use (\ref{rel:comm}).
\end{proof}

\begin{lem}\label{lem:kerxi}
	For $\alpha \in \mathrm{ker}(g^{-1}(h-1)(1-g))$ we have
	\begin{equation}\label{ob1}
	\xi_g(\alpha g^{-1}(h-1)) = 1
	\end{equation}
	and
	\begin{equation}\label{ob2}
	\xi_h(\alpha (g^{-1}-\sum_{i=0}^{\phi-1}h^i)) = 1.
	\end{equation}
\end{lem}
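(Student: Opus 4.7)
The key insight is that both statements should follow from combining the identity in Lemma~\ref{lem:kers} with the defining property of a standard lift, namely that $\xi_g|_{L_g} = 1$ and $\xi_h|_{L_h} = 1$ (which is demanded in the discussion after Theorem~\ref{ZqZplift} to preserve standardness).

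First I would tackle (\ref{ob1}). The hypothesis $\alpha \in \ker\!\big(g^{-1}(h-1)(1-g)\big)$ means $\alpha g^{-1}(h-1)(1-g) = 0$, so the vector $\alpha g^{-1}(h-1)$ lies in $\ker(1-g) = L_g$. Since $\xi_g$ is required to be trivial on $L_g$ (and in the explicit construction of Theorem~\ref{ZqZplift} we even have $\xi_g \equiv 1$ on all of $L$), this forces $\xi_g(\alpha g^{-1}(h-1)) = 1$.

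For (\ref{ob2}) the argument runs in exact parallel but uses the nontrivial content of Lemma~\ref{lem:kers}. Applying that identity, the kernel condition $\alpha g^{-1}(h-1)(1-g) = 0$ is equivalent to
\begin{equation*}
\alpha\Big(g^{-1}-\sum_{i=0}^{\phi-1}h^i\Big)(1-h) = 0,
\end{equation*}
so the vector $\alpha\big(g^{-1}-\sum_{i=0}^{\phi-1}h^i\big)$ lies in $\ker(1-h) = L_h$. Because $\xi_h$ is a homomorphism that vanishes on the $h$-fixed sublattice $L_h$, the claim follows.

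There is really no obstacle; the only subtle point is to recognise Lemma~\ref{lem:kers} as the vehicle that turns the left-hand kernel condition into a membership statement in $L_h$. The significance of the lemma in the broader logic is that it exposes the obstruction to solving (\ref{eq:def_xi}): for any $\alpha$ in $\ker(g^{-1}(h-1)(1-g))$ the left-hand side of (\ref{eq:def_xi}) is automatically $1$, so a necessary consistency condition on $f$ is $f(\alpha)=1$ on this kernel. In the concrete basis of Lemma~\ref{thm:decomp} this specialises to the hypothesis $f(\gamma_i)=1$ used in Theorem~\ref{ZqZplift}, and thereby explains precisely when a splitting lift exists.
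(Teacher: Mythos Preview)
Your proof is correct and follows essentially the same route as the paper: for (\ref{ob1}) you observe that $\alpha g^{-1}(h-1)\in\ker(1-g)=L_g$ and invoke $\xi_g|_{L_g}=1$, and for (\ref{ob2}) you use Lemma~\ref{lem:kers} to rewrite the kernel condition so that $\alpha(g^{-1}-\sum_{i=0}^{\phi-1}h^i)\in L_h$, then invoke $\xi_h|_{L_h}=1$. Your added remarks on the role of the lemma in identifying the obstruction are accurate but extraneous to the proof itself.
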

\begin{proof}
	If $\alpha \in \mathrm{ker}(g^{-1}(h-1)(1-g))$, it follows that $\alpha g^{-1}(h-1)\in L_g$, hence (\ref{ob1}). By Lemma~\ref{lem:kers} we can take $\alpha \in \mathrm{ker}((g^{-1}-\sum_{i=0}^{\phi-1}h^i)(1-h))$, which again implies
  $\alpha(g^{-1}-\sum_{i=0}^{\phi-1}h^i)\in L_h$, from which (\ref{ob2}) follows.
\end{proof}

It follows immediately that
\begin{cor}
	There is a solution to (\ref{eq:def_xi}) if and only if $f(\alpha) = 1$ for all $\alpha \in \mathrm{ker}(g^{-1}(h-1)(1-g))$.
\end{cor}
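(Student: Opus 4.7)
The plan is to prove both implications by directly linking the kernel condition to the obstructions appearing in Theorem~\ref{ZqZplift}. For the \emph{only if} direction, I would simply specialize. Suppose a pair $(\xi_g,\xi_h)$ of homomorphisms solving (\ref{eq:def_xi}) exists, with $\xi_g|_{L_g}=1$ and $\xi_h|_{L_h}=1$ as required for $\tilde\eta_g,\tilde\eta_h$ to remain standard lifts. Pick any $\alpha\in\ker(g^{-1}(h-1)(1-g))$. By Lemma~\ref{lem:kerxi} both factors on the left-hand side of (\ref{eq:def_xi}) evaluate to $1$, so the right-hand side $f(\alpha)$ must equal $1$.

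For the \emph{if} direction, the plan is to show that the hypothesis on $f$ implies the hypothesis $f(\gamma_j)=1$ of Theorem~\ref{ZqZplift}, at which point the explicit construction in that theorem finishes the job. First I would verify that each basis vector $\gamma_j$ produced by Lemma~\ref{thm:decomp} lies in $\ker(g^{-1}(h-1)(1-g))$. Indeed, by construction $\gamma_j(g^{-1}-\sum_{i=0}^{\phi-1}h^i)\in L_h$, hence is annihilated by $1-h$; applying Lemma~\ref{lem:kers}, which identifies $g^{-1}(h-1)(1-g)$ with $-(g^{-1}-\sum_{i=0}^{\phi-1}h^i)(1-h)$, gives $\gamma_j g^{-1}(h-1)(1-g)=0$. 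The hypothesis then yields $f(\gamma_j)=1$ for all $j$, so Theorem~\ref{ZqZplift} produces the desired $\xi_g,\xi_h$.

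The only step that requires any real care is bookkeeping: one must check that the $\xi_h$ defined on the basis $\{\beta_i,\delta_j,\epsilon_j\}$ in Theorem~\ref{ZqZplift} is well defined as a homomorphism $L\to\C^*$, in particular that $\xi_h(\beta_i)=f(\alpha_i)^{1/s_i}$ is consistent with the torsion coming from $s_i$, and that $\xi_h|_{L_h}=1$ (which holds because $\xi_h(\epsilon_j)=1$). Everything else is routine: the two implications glue together via Lemma~\ref{lem:kers}, which is precisely the identity that makes the kernel on the $g$-side coincide with the obstruction on the $h$-side, so that a single kernel condition on $f$ controls both factors of (\ref{eq:def_xi}) simultaneously.
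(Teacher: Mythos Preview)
Your proposal is correct and follows essentially the same route as the paper: for the ``if'' direction you show $\gamma_j\in\ker(g^{-1}(h-1)(1-g))$ via (\ref{decomp2}) and Lemma~\ref{lem:kers} and then invoke Theorem~\ref{ZqZplift}, and for the ``only if'' direction you apply Lemma~\ref{lem:kerxi}, exactly as the paper does. Your only superfluous worry is the ``torsion'' remark about $\xi_h(\beta_i)=f(\alpha_i)^{1/s_i}$: since $\{\beta_i,\delta_j,\epsilon_j\}$ is a basis of the free $\Z$-module $L$, any choice of $s_i$-th root defines a homomorphism, so there is no consistency condition to check.
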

\begin{proof}
	$\gamma_i \in \mathrm{ker}(g^{-1}(h-1)(1-g))$ by (\ref{decomp2}) and Lemma~\ref{lem:kers}, so that Theorem~\ref{ZqZplift} applies. The converse follows from Lemma~\ref{lem:kerxi}.
\end{proof}
We then check by hand that for all our groups, $f(\alpha)=1$ for all $\alpha \in \mathrm{ker}(g^{-1}(h-1)(1-g))$.

\section{Twisted modules for lattice VOAs}

\subsection{Basis of twisted lattice operators}

\begin{thm}\label{thm:Ugeneral}
	Let $\{\alpha_i\}$ be a basis of $L$ and assume that we have a fixed set $U_{\alpha_i}$ that satisfy (\ref{meq:U_mult}). Then the lattice operators defined by
	\begin{equation}
	U_{(\sum_i n_i \alpha_i)} = \prod_{i<j} \big(\epsilon(\alpha_i,\alpha_j)B(\alpha_i,\alpha_j)\big)^{n_in_j}
	\prod_i U_{\alpha_i}^{n_i}\big(\epsilon(\alpha_i,\alpha_i)B(\alpha_i,\alpha_i)\big)^{\frac{n_i(n_i-1)}{2}},
	\end{equation}
	where we use the shorthand
	\begin{equation}
	\prod_i U_{\alpha}^{n_i} := U_{\alpha_1}^{n_1}U_{\alpha_2}^{n_2}\ldots .
	\end{equation}
	all satisfy (\ref{meq:U_mult}).
\end{thm}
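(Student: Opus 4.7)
The plan is to verify the full multiplication rule $U_\alpha U_\beta = \epsilon(\alpha,\beta)B(\alpha,\beta)^{-1}U_{\alpha+\beta}$ for arbitrary $\alpha,\beta\in L$, starting from the stated hypothesis on basis elements. Setting $C(\alpha,\beta):=\epsilon(\alpha,\beta)B(\alpha,\beta)^{-1}$, the crucial structural observation is that both $\epsilon$ and $B$ are bimultiplicative on all of $L$ (for $\epsilon$ this is Definition~\ref{def:eps}; for $B$ it is immediate from the product formula \eqref{meq:B_def}). Consequently, $C$ is bimultiplicative and in particular satisfies the 2-cocycle identity $C(\alpha,\beta)C(\alpha+\beta,\gamma)=C(\alpha,\beta+\gamma)C(\beta,\gamma)$ automatically.

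I would then argue by a nested induction. The inner induction is on the length $N=\sum_i|n_i|$ of $\alpha=\sum_i n_i\alpha_i$ and proves the base case
\[
U_\alpha U_{\alpha_k} = C(\alpha,\alpha_k)\,U_{\alpha+\alpha_k}
\]
for every basis vector $\alpha_k$. For $N=1$ this is exactly the standing hypothesis (for $\alpha=\alpha_k$ one uses the diagonal rule $U_{\alpha_k}^2=C(\alpha_k,\alpha_k)U_{2\alpha_k}$). For the step, split off $\alpha=\alpha'+\alpha_j$, rewrite $U_\alpha$ via the defining formula as an ordered product terminating in $U_{\alpha_j}$, apply the basis-level hypothesis to combine $U_{\alpha_j}U_{\alpha_k}$, and check that the resulting prefactors coincide with those prescribed by the formula for $U_{\alpha+\alpha_k}$. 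The outer induction is on the length $M=\sum_i|m_i|$ of $\beta=\sum_i m_i\alpha_i$. Writing $\beta=\beta'+\alpha_k$, the formula gives $U_\beta=C(\beta',\alpha_k)^{-1}U_{\beta'}U_{\alpha_k}$ up to reorderings handled by bimultiplicativity, and then
\[
U_\alpha U_\beta = C(\beta',\alpha_k)^{-1}C(\alpha,\beta')C(\alpha+\beta',\alpha_k)\,U_{\alpha+\beta}
\]
by the inductive hypothesis on $\beta'$ followed by the base case applied to $(\alpha+\beta',\alpha_k)$. The prefactor collapses to $C(\alpha,\beta)$ by the 2-cocycle identity.

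The main obstacle is the combinatorial bookkeeping in the inner induction. Concretely, when $k$ already occurs in the support of $\alpha$, one must see that the diagonal exponent $n_k(n_k-1)/2$ advances precisely to $n_k(n_k+1)/2$ upon adding one copy of $\alpha_k$, while when $k$ is new, the entirely absent diagonal factor correctly remains at exponent $0$. The off-diagonal factors $\prod_i C(\alpha_i,\alpha_k)^{n_i}$ that must be recovered to form $C(\alpha,\alpha_k)$ arise as the skew-commutator residues $C(\alpha_j,\alpha_k)/C(\alpha_k,\alpha_j)$ accrued while threading $U_{\alpha_k}$ past the higher-indexed generators in the ordered product; these reassemble correctly exactly because $C$ is bimultiplicative. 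Negative coefficients $n_i<0$ are dealt with by invoking $U_{-\alpha_i}=C(\alpha_i,-\alpha_i)U_{\alpha_i}^{-1}$, which is forced on us by the hypothesis.
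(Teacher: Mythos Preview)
Your argument is correct and rests on the same key ingredient as the paper's: the bimultiplicativity of $\epsilon$ and $B$, hence of $C=\epsilon B^{-1}$, together with the commutator relation that lets one reorder the generators. The paper simply carries this out as a single direct computation rather than as a nested induction: setting $\hat\epsilon=\epsilon B=C^{-1}$, it writes out $U_\alpha U_\beta\,\hat\epsilon(\alpha,\beta)$ with the defining formula on both factors, expands $\hat\epsilon(\sum n_i\alpha_i,\sum m_j\alpha_j)$ bimultiplicatively, and invokes the reordering identity
\[
\prod_i U_{\alpha_i}^{n_i}\prod_j U_{\alpha_j}^{m_j}=\prod_i U_{\alpha_i}^{n_i+m_i}\prod_{i>j}\Bigl(\tfrac{\hat\epsilon(\alpha_j,\alpha_i)}{\hat\epsilon(\alpha_i,\alpha_j)}\Bigr)^{n_im_j}
\]
to collapse everything to $U_{\alpha+\beta}$ in one pass. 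This is exactly your ``threading'' step done wholesale instead of one generator at a time; your inductive framing is slightly more modular and makes the role of the 2-cocycle identity explicit, while the paper's direct bookkeeping is shorter.
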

\begin{proof}
	Let $\hat \epsilon(\alpha,\beta) = \epsilon(\alpha,\beta)B(\alpha,\beta)$, such that Equation \ref{meq:U_mult} becomes $U_{\alpha+\beta} = \hat{\epsilon}(\alpha,\beta)U_{\alpha} U_{\beta}$. Then we calculate
	\begin{align}
	U_{(\sum_i n_i \alpha_i)} & U_{(\sum_j m_j \alpha_j)} \hat \epsilon\big(\sum_i n_i \alpha_i,\sum_j m_j \alpha_j\big)  = \\
	& =\prod_{i,j}\hat\epsilon(\alpha_i,\alpha_j)^{n_i m_j} \prod_{i<j} \hat\epsilon(\alpha_i,\alpha_j)^{n_in_j + m_im_j}\prod_{i}\hat\epsilon(\alpha_i,\alpha_i)^{\frac{n_i(n_i-1)+m_i(m_i-1)}{2}}  \prod_i U_{\alpha_i}^{n_i} \prod_j U_{\alpha_j}^{m_j} \\
	& =\prod_{i<j} \hat\epsilon(\alpha_i,\alpha_j)^{n_in_j + m_im_j + n_im_j + n_jm_i} \prod_{i}\hat\epsilon(\alpha_i,\alpha_i)^{\frac{n_i(n_i-1)+m_i(m_i-1)}{2}+n_im_i} \prod_i U_{\alpha_i}^{n_im_i} \\
	& = \prod_{i<j} \hat\epsilon(\alpha_i,\alpha_j)^{(n_i+m_i)(n_j+m_j)} \prod_{i}\hat\epsilon(\alpha_i,\alpha_i)^{\frac{(n_i+m_i)(n_i+m_i-1)}{2}} \prod_i U_{\alpha_i}^{n_im_i},
	\end{align}
	where we used the identity
	\begin{equation}
	\prod_i U_{\alpha_i}^{n_i} \prod_j U_{\alpha_j}^{m_j} = \prod_i U_{\alpha_i}^{n_im_i} \prod_{i>j}\bigg(\frac{\hat\epsilon(\alpha_j,\alpha_i)}{\hat\epsilon(\alpha_i,\alpha_j)}\bigg)^{n_im_j}.
	\end{equation}
\end{proof}

\subsection{Twist compatibility}\label{app:twist}

\begin{lem}\label{cor:tw_comp}
	The twist compatibility condition \ref{meq:U_tw} is equivalent to
	\begin{equation}\label{eq:U_tw3}
U_{\alpha(1-g)} = \eta(\alpha)^{-1}\epsilon(\alpha(1-g),\alpha g)
B(\alpha (1-g),\alpha g)^{-1}\exp(2\pi i b_{\alpha})\exp(-2 \pi i \alpha_{(0)}).
\end{equation}
\end{lem}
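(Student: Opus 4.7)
The strategy is a direct algebraic manipulation: rewrite the lattice vector $\alpha g = \alpha - \alpha(1-g)$ and use the multiplication rule (\ref{meq:U_mult}) to exchange $U_{\alpha g}$ for a product involving $U_\alpha$ and $U_{\alpha(1-g)}$, then invoke the original twist compatibility (\ref{meq:U_tw}) to eliminate $U_{\alpha g}$.

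Concretely, I would start from the identity $\alpha = \alpha(1-g) + \alpha g$ and apply (\ref{meq:U_mult}) to get
\begin{equation*}
U_{\alpha(1-g)}\, U_{\alpha g} \;=\; \epsilon(\alpha(1-g),\alpha g)\, B(\alpha(1-g),\alpha g)^{-1}\, U_\alpha,
\end{equation*}
so that
\begin{equation*}
U_{\alpha(1-g)} \;=\; \epsilon(\alpha(1-g),\alpha g)\, B(\alpha(1-g),\alpha g)^{-1}\, U_\alpha\, U_{\alpha g}^{-1}.
\end{equation*}
Now substitute (\ref{meq:U_tw}) in the form $U_{\alpha g}^{-1} = \eta_g(\alpha)^{-1}\exp(-2\pi i(b_\alpha + \alpha_{(0)}))\, U_\alpha^{-1}$. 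The resulting expression contains the conjugation $U_\alpha \exp(-2\pi i \alpha_{(0)}) U_\alpha^{-1}$, which I would reduce using the commutator relation $[\alpha_{(0)}, U_\alpha] = \langle \pi_g\alpha\,|\,\alpha\rangle U_\alpha = 2b_\alpha U_\alpha$. This gives $U_\alpha \alpha_{(0)} U_\alpha^{-1} = \alpha_{(0)} - 2b_\alpha$, so
\begin{equation*}
U_\alpha \exp(-2\pi i \alpha_{(0)}) U_\alpha^{-1} \;=\; \exp(4\pi i b_\alpha)\exp(-2\pi i \alpha_{(0)}).
\end{equation*}
Collecting the scalar factors $\exp(-2\pi i b_\alpha)\exp(4\pi i b_\alpha) = \exp(2\pi i b_\alpha)$ yields exactly (\ref{eq:U_tw3}).

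For the converse, every step above (multiplying by $U_{\alpha g}$, substituting, conjugating by $U_\alpha$) is invertible, so (\ref{eq:U_tw3}) implies (\ref{meq:U_tw}) by the same chain read backwards. There is no genuine obstacle here; the only bookkeeping subtlety is tracking the sign that arises when moving $\exp(-2\pi i \alpha_{(0)})$ through $U_\alpha$, which is precisely what produces the $\exp(2\pi i b_\alpha)$ rather than $\exp(-2\pi i b_\alpha)$ on the right-hand side of (\ref{eq:U_tw3}).
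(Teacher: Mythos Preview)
Your proof is correct and is precisely the ``straightforward computation'' the paper alludes to (the paper's own proof consists of that phrase alone). The only step requiring any care --- commuting $\exp(-2\pi i\alpha_{(0)})$ past $U_\alpha$ via $[\alpha_{(0)},U_\alpha]=\langle\pi_g\alpha\,|\,\alpha\rangle U_\alpha=2b_\alpha U_\alpha$ --- you handle correctly, and the reversibility argument for the converse is fine.
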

\begin{proof}
Straightforward computation.
\end{proof}

\begin{lem}\label{cor:Ugeneraltwist}
	If the operators $U_{\alpha}, U_\beta$ satisfy the twist compatibility condition (\ref{meq:U_mult}), then so does $U_{\alpha+\beta}$.
\end{lem}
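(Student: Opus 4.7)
The plan is to verify the twist compatibility condition (\ref{meq:U_tw}) for $U_{\alpha+\beta}$ by decomposing it via the multiplication rule (\ref{meq:U_mult}) and applying the already-known twist compatibility of $U_\alpha$ and $U_\beta$. Concretely, I would start from
\[
U_{(\alpha+\beta)g} = U_{\alpha g + \beta g} = \epsilon(\alpha g,\beta g)^{-1}B(\alpha g,\beta g)\,U_{\alpha g}U_{\beta g},
\]
then substitute $U_{\alpha g} = \eta_g(\alpha)U_\alpha\exp(2\pi i(b_\alpha + \alpha_{(0)}))$ and similarly for $\beta$, and compare with what (\ref{meq:U_tw}) would predict for $\alpha+\beta$, namely $\eta_g(\alpha+\beta)U_{\alpha+\beta}\exp(2\pi i(b_{\alpha+\beta}+(\alpha+\beta)_{(0)}))$, with $U_\alpha U_\beta$ on the right-hand side re-expressed via (\ref{meq:U_mult}) as $\epsilon(\alpha,\beta)B(\alpha,\beta)^{-1}U_{\alpha+\beta}$.

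The identity to be verified then reduces to a scalar equation relating four kinds of data:
\begin{itemize}
\item the $\eta_g$-factors, constrained by the defining cocycle relation (\ref{etacondition}),
$$\eta_g(\alpha)\eta_g(\beta)\epsilon(\alpha,\beta) = \eta_g(\alpha+\beta)\epsilon(\alpha g,\beta g);$$
\item the $B$-factors at $(\alpha,\beta)$ versus at $(\alpha g,\beta g)$, which must be shown equal (or differ by a controlled phase) using the defining product formula (\ref{meq:B_def}) together with the fact that $g$ is a lattice isometry commuting with itself, so that $\langle \alpha g^{k+1},\beta g\rangle = \langle \alpha g^k,\beta\rangle$ after a reindexing of the product;
\item the quadratic pieces $b_{\alpha+\beta} - b_\alpha - b_\beta = \langle \pi_g\alpha,\pi_g\beta\rangle$, which produce an $\exp(2\pi i\langle\pi_g\alpha,\pi_g\beta\rangle)$ that must be absorbed into the braiding between $\exp(2\pi i\alpha_{(0)})$ and $U_\beta$;
\item the commutation of $\exp(2\pi i\alpha_{(0)})$ past $U_\beta$, which by the relation $[h_{(m)},U_\beta]=\delta_{m,0}\langle\pi_g(h)\,|\,\beta\rangle U_\beta$ produces exactly $\exp(2\pi i\langle\pi_g\alpha,\beta\rangle)=\exp(2\pi i\langle\pi_g\alpha,\pi_g\beta\rangle)$, since $\pi_g$ is self-adjoint.
\end{itemize}

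The main obstacle is purely bookkeeping: one has to verify that the product of $\epsilon(\alpha g,\beta g)^{-1}B(\alpha g,\beta g)$, the $\eta_g$-ratio from (\ref{etacondition}), the exponential quadratic terms $b_\alpha+b_\beta-b_{\alpha+\beta}$, and the commutator phase from moving $\exp(2\pi i\alpha_{(0)})$ past $U_\beta$ collapses to $\epsilon(\alpha,\beta)B(\alpha,\beta)^{-1}$, so that the resulting equation is precisely (\ref{meq:U_tw}) applied to $\alpha+\beta$. Once this cancellation is done, the lemma follows, and by induction together with Theorem~\ref{thm:Ugeneral} one obtains twist compatibility for every $U_\mu$ built from a twist-compatible basis.
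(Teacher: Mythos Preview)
Your outline is correct and essentially follows the paper's argument. The only cosmetic difference is that the paper first rewrites twist compatibility in the equivalent form of Lemma~\ref{cor:tw_comp} (i.e.\ in terms of $U_{\alpha(1-g)}$ rather than $U_{\alpha g}$) and then carries out the same four-ingredient bookkeeping you describe: the multiplication rule for $U_{\alpha(1-g)}U_{\beta(1-g)}$, the cocycle relation for $\eta_g$, the bilinear cross-term $b_{\alpha+\beta}-b_\alpha-b_\beta=\langle\pi_g\alpha,\pi_g\beta\rangle$, and the commutator of $\exp(-2\pi i\alpha_{(0)})$ past $U_{\beta(1-g)}$. Your observation that $B(\alpha g,\beta g)=B(\alpha,\beta)$ by reindexing the product over $k$ is exactly what makes the scalar identity close.
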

\begin{proof}
We prove this by showing that  $U_{(\alpha+\beta)(1-g)}$ satisfies  (\ref{eq:U_tw3}).
	Let $\tilde \epsilon(.,.) = \epsilon(.,.)B(.,.)^{-1}$.We calculate
	\begin{align}
	U_{(\alpha+\beta)(1-g)} & =
	\tilde\epsilon(\alpha(1-g),\beta(1-g))^{-1}U_{\alpha(1-g)}U_{\beta(1-g)} \\
	& = \eta(\alpha)^{-1}\eta(\beta)^{-1} \frac{\tilde \epsilon(\alpha(1-g),\alpha g)\tilde \epsilon(\beta(1-g),\beta g)
	}{\tilde \epsilon(\alpha(1-g),\beta(1-g))} \\
	& \quad\quad \exp(2\pi i (b_{\alpha}+b_{\beta}))\exp(-2\pi i (\alpha_{(0)} + \beta_{(0)})) \\
	& = \eta(\alpha+\beta)^{-1}\tilde \epsilon(\beta(1-g),\alpha g)\tilde \epsilon(\alpha(1-g),\alpha g) \\
	& \quad \quad \tilde \epsilon(\beta(1-g),\beta g)\tilde \epsilon(\alpha(1-g),\beta g)\exp(2\pi i (b_{\alpha}+b_{\beta}+\langle \alpha_0|\beta_0 \rangle))\exp(-2\pi i (\alpha+ \beta)_{(0)})\label{ln:cor_comm}   \\
	& = \eta(\alpha+\beta)^{-1}\tilde \epsilon((\alpha+\beta)(1-g),(\alpha+\beta) g)\exp(2\pi i b_{\alpha+\beta})\exp(-2\pi i (\alpha+ \beta)_{(0)}).
	\end{align}
\end{proof}

\subsection{Defect representation $\Omega$}

\begin{thm}\label{thm:lambdabasis}
	Let $\{\alpha_i\}$ be a basis of $L^{\perp}$ such that $s_i \alpha_i \in L(1-g)$ for all $i$ and $\eta(\tilde \alpha_i) = 1$, where $\tilde \alpha_i(1-g) = s_i \alpha_i $ and let $\lambda$ satisfy Equations \ref{meq:lambda_mult} and \ref{meq:lambda_tw2}. Then $\lambda(\alpha_i)$
	is uniquely determined up to an $s_i$-th  root of unity by
	\begin{equation}
	\lambda(\alpha_i) =  \eta(\tilde \alpha_i)^{-\frac{1}{s_i}}\epsilon(\alpha_i,\tilde \alpha_i)B(\alpha_i,\tilde \alpha_i)^{-1}\big(\epsilon(\alpha_i,\alpha_i)B(\alpha_i,\alpha_i)\big)^{\frac{s_i+1}{2}}\exp(\frac{2\pi i}{s_i} b_{\tilde \alpha_i})
	\end{equation}

\end{thm}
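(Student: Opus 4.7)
The plan is to compute $\lambda(s_i\alpha_i)$ in two different ways and equate the results, then take an $s_i$-th root. On the one hand, since $s_i\alpha_i = \tilde\alpha_i(1-g)$, the twist compatibility constraint (\ref{meq:lambda_tw2}) applied with $\alpha=\tilde\alpha_i$ and the hypothesis $\eta(\tilde\alpha_i)=1$ gives a closed-form expression
\[
\lambda(s_i\alpha_i)=\epsilon(s_i\alpha_i,\tilde\alpha_i g)\,B(s_i\alpha_i,\tilde\alpha_i g)^{-1}\exp(2\pi i\, b_{\tilde\alpha_i}).
\]
On the other hand, $U_{s_i\alpha_i}$ can be rewritten as a monomial in $U_{\alpha_i}$: by Theorem~\ref{thm:Ugeneral} applied to the rank-one case,
\[
U_{s_i\alpha_i}=U_{\alpha_i}^{s_i}\bigl(\epsilon(\alpha_i,\alpha_i)B(\alpha_i,\alpha_i)\bigr)^{\frac{s_i(s_i-1)}{2}}.
\]
Substituting the defining relation $U_\mu=\lambda(\mu)\rho(x_{[\mu]},\chi_{[\mu]})$ and observing that $[s_i\alpha_i]=0$ in $N$ (so that $\rho(x_{[s_i\alpha_i]},\chi_{[s_i\alpha_i]})=\mathrm{id}$) converts this into a relation of the form
\[
\lambda(s_i\alpha_i)=\lambda(\alpha_i)^{s_i}\cdot\bigl(\epsilon(\alpha_i,\alpha_i)B(\alpha_i,\alpha_i)\bigr)^{\frac{s_i(s_i-1)}{2}}\cdot\kappa_i,
\]
where $\kappa_i$ is the scalar arising from $\rho(x_{[\alpha_i]},\chi_{[\alpha_i]})^{s_i}$, which is well-defined because $s_i x_{[\alpha_i]}=0$ and $\chi_{[\alpha_i]}^{s_i}=1$.

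Equating the two expressions for $\lambda(s_i\alpha_i)$ yields a formula for $\lambda(\alpha_i)^{s_i}$. Taking any $s_i$-th root then determines $\lambda(\alpha_i)$ up to an $s_i$-th root of unity, exactly as claimed. To match the precise form stated, the final step is to massage the RHS using the bimultiplicativity of $\epsilon$ and $B$: expand $\tilde\alpha_i g=\tilde\alpha_i-s_i\alpha_i$ (using $\tilde\alpha_i(1-g)=s_i\alpha_i$), split $\epsilon(s_i\alpha_i,\tilde\alpha_i)$ and $B(s_i\alpha_i,\tilde\alpha_i)$ into $s_i$ factors of $\epsilon(\alpha_i,\tilde\alpha_i)$ and $B(\alpha_i,\tilde\alpha_i)$ respectively, and absorb the leftover diagonal terms $\epsilon(\alpha_i,\alpha_i)B(\alpha_i,\alpha_i)$ into the $\tfrac{s_i(s_i-1)}{2}$ exponent, shifting it to $\tfrac{s_i+1}{2}$ after the $s_i$-th root is extracted.

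The main obstacle is bookkeeping: keeping track of the discrepancy between $B(s_i\alpha_i,\cdot)$ (which is $B(\alpha_i,\cdot)^{s_i}$ by bimultiplicativity) and the scalar $\kappa_i$ coming from the projective representation $\rho$, and verifying that the residual phases combine correctly into the stated exponent $(s_i+1)/2$ of $\epsilon(\alpha_i,\alpha_i)B(\alpha_i,\alpha_i)$. A convenient way to control $\kappa_i$ is to use the explicit action $\rho(x,\chi)^n e_\psi=\psi(x)^n\chi(x)^{n(n-1)/2}e_{\chi^n\psi}$ derived from the definition of $\rho$; at $n=s_i$ the first factor is $1$ while the second contributes a controlled $\chi_{[\alpha_i]}(x_{[\alpha_i]})^{s_i(s_i-1)/2}$, which cancels against a matching $C$-dependent factor hidden in $\epsilon(s_i\alpha_i,\tilde\alpha_i g)B(s_i\alpha_i,\tilde\alpha_i g)^{-1}$ once $\tilde\alpha_i g$ is rewritten in terms of $\alpha_i$. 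After this cancellation, the $s_i$-th root is extracted cleanly and the stated formula emerges.
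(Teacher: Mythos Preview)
Your overall strategy---compute $\lambda(s_i\alpha_i)$ in two ways, once from the twist constraint (\ref{meq:lambda_tw2}) and once by iterating (\ref{meq:lambda_mult}), then equate and extract an $s_i$-th root---is exactly the paper's approach, and the manipulation with $\tilde\alpha_i g=\tilde\alpha_i-s_i\alpha_i$ and bimultiplicativity is the same.

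The gap is in your treatment of the scalar $\kappa_i=\chi_{[\alpha_i]}(x_{[\alpha_i]})^{s_i(s_i-1)/2}$. You assert that it ``cancels against a matching $C$-dependent factor hidden in $\epsilon(s_i\alpha_i,\tilde\alpha_i g)B(s_i\alpha_i,\tilde\alpha_i g)^{-1}$'', but this cannot be right: the quantity $\chi_{[\alpha_i]}(x_{[\alpha_i]})$ depends on the chosen Lagrangian decomposition $\gamma:N\to A+\check A$, whereas everything on the twist-compatibility side is pure lattice data ($\epsilon$, $B$, $\eta$, $b_{\tilde\alpha_i}$) and is completely insensitive to $\gamma$. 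Moreover $C(\alpha_i,\alpha_i)=1$ since $C$ is alternating, so there is no hidden commutator factor to absorb $\kappa_i$. In general $\kappa_i\neq 1$ and the stated formula for $\lambda(\alpha_i)$ would be off by exactly this factor.

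The paper resolves this not by cancellation but by a basis choice: it works in the Darboux basis of Theorem~\ref{thm:darboux1}, in which each basis element $\alpha_i$ of $L_g^\perp$ maps under $\gamma$ either to a pure $A$-element $(x,1)$ or to a pure $\check A$-element $(0,\chi)$, so that $\chi_{[\alpha_i]}(x_{[\alpha_i]})=1$ and hence $\kappa_i=1$ automatically. This is the content of the paper's remark ``we may choose a basis such that $\chi_{\alpha_i}(x_{\alpha_i})$''. Once you impose that choice, your argument goes through verbatim and matches the paper's computation line by line.
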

\begin{proof}
	By Equation \ref{meq:lambda_mult} we have
	\begin{equation}
	\lambda(s_i \alpha_i) = \lambda(\alpha_i)^{s_i} (\epsilon(\alpha_i,\alpha_i)B(\alpha_i,\alpha_i))^{\frac{s_i(s_i-1)}{2}},
	\end{equation}
	 where we may choose a basis such that $\chi_{\alpha_i}(x_{\alpha_i})$ and by Equation \ref{meq:lambda_tw2}
	\begin{align}
	\lambda(s_i \alpha_i) & = \lambda(\tilde \alpha_i(1-g)) \\
	& = \eta(\tilde \alpha_i)^{-1}\epsilon(\tilde \alpha_i(1-g),\tilde\alpha_i g)
	B(\tilde \alpha_i (1-g),\tilde \alpha_i g)^{-1}\exp(2\pi i b_{\tilde \alpha_i}) \\
	& = \eta(\tilde \alpha_i)^{-1}\epsilon(s_i\alpha_i,\tilde\alpha_i g)B(s_i\alpha_i,\tilde\alpha_i g)^{-1}\exp(2\pi i b_{\tilde \alpha_i}) \\
	& = \eta(\tilde \alpha_i)^{-1}\epsilon(s_i\alpha_i,\tilde\alpha_i - s_i\alpha_i)B(s_i\alpha_i,\tilde\alpha_i - s_i\alpha_i)^{-1}\exp(2\pi i b_{\tilde \alpha_i}) \\
	& = \eta(\tilde \alpha_i)^{-1}\big(\epsilon(\alpha_i,\tilde\alpha_i)B(\alpha_i,\tilde\alpha_i)^{-1}\big)^{s_i}\big(\epsilon(\alpha_i,\alpha_i)B(\alpha_i,\alpha_i)\big)^{s_i^2}\exp(2\pi i b_{\tilde \alpha_i})
	\end{align}
	Hence it follows that
	\begin{equation}
	\lambda(\alpha_i)^{s_i} = \eta(\tilde \alpha_i)^{-1}\big(\epsilon(\alpha_i,\tilde\alpha_i)B(\alpha_i,\tilde\alpha_i)^{-1}\big)^{s_i}\big(\epsilon(\alpha_i,\alpha_i)B(\alpha_i,\alpha_i)\big)^{\frac{s_i(s_i+1)}{2}}\exp(2\pi i b_{\tilde \alpha_i}),
	\end{equation}
	so that in particular $\lambda(\alpha_i)$ is determined up to an $s_i$-th root of unity.
\end{proof}

\medskip

\bibliographystyle{alpha}
 \bibliography{../refmain}

\end{document}